\newtheorem{theorem}{Theorem}[section]
\newtheorem{lemma}[theorem]{Lemma}
\newtheorem{proposition}[theorem]{Proposition}
\newtheorem{corollary}[theorem]{Corollary}
\theoremstyle{remark}
\DeclareMathOperator{\cont}{cont} 
\newcommand{\bC}{\mathbb{C}}
\newcommand{\bG}{\mathbb{G}}
\newcommand{\bN}{\mathbb{N}}
\newcommand{\bQ}{\mathbb{Q}}
\newcommand{\bR}{\mathbb{R}}
\newcommand{\bZ}{\mathbb{Z}}
\newcommand{\cA}{{\mathcal{A}}}
\newcommand{\cL}{{\mathcal{L}}}
\newcommand{\cO}{{\mathcal{O}}}
\newcommand{\tA}{\tilde{A}}
\newcommand{\tF}{\tilde{F}}
\newcommand{\tM}{\tilde{M}}
\newcommand{\tP}{\tilde{P}}
\newcommand{\tQ}{\tilde{Q}}
\newcommand{\ue}{\mathbf{e}}
\newcommand{\ux}{\mathbf{x}}
\newcommand{\uy}{\mathbf{y}}
\newcommand{\uz}{\mathbf{z}}
\newcommand{\Cmult}{\bC^\times}
\newcommand{\CT}{\bC[T]}
\newcommand{\disp}{\displaystyle}
\newcommand{\et}{\quad\mbox{and}\quad}
\newcommand{\Ga}{\bG_\mathrm{a}}
\newcommand{\Gm}{\bG_\mathrm{m}}
\newcommand{\LT}{L[T]}
\newcommand{\Qmult}{\bQ^\times}
\newcommand{\QT}{\bQ[T]}
\newcommand{\Qplus}{\bQ_+}
\newcommand{\Res}{\mathrm{Res}}
\newcommand{\uns}{\{1,\dots,s\}}
\newcommand{\Zs}{\bZ^s}
\newcommand{\ZT}{\bZ[T]}
\begin{document}

\baselineskip=16.5pt 

\title[Small value estimates]
{Small value estimates for the additive group}
\author{Damien ROY}
\address{
   D\'epartement de Math\'ematiques\\
   Universit\'e d'Ottawa\\
   585 King Edward\\
   Ottawa, Ontario K1N 6N5, Canada}
\email{droy@uottawa.ca}
\dedicatory{ \`A Michel Waldschmidt,\\
 infatigable voyageur et grand communicateur,\\
 avec mes meilleurs v{\oe}ux et ma plus grande estime,\\
 \`a l'occasion de son soixanti\`eme anniversaire.}
\subjclass{Primary 11J81; Secondary 11J85}
\thanks{Work partially supported by NSERC and CICMA}

\maketitle

\begin{abstract}
We generalize Gel'fond's criterion of algebraic independence to the
context of a sequence of polynomials whose first derivatives take
small values on large subsets of a fixed subgroup of $\bC$, instead
of just one point (one extension deals with a subgroup of $\Cmult$).
\end{abstract}

%
%

\section{Introduction}
\label{sec:intro}

A typical proof of algebraic independence starts with the
construction of a sequence of auxiliary polynomials taking small
values at many points of a finitely generated subgroup $\Gamma$ of a
commutative algebraic group $G$ defined over some algebraic
extension of $\bQ$. This data is analyzed by applying sequentially a
criterion of algebraic independence and a zero estimate.  The
criterion of algebraic independence first looks at each value
individually while the zero estimate is used to ensure that the
polynomials do not vanish on nearby points from a slight
perturbation $\tilde{\Gamma}$ of $\Gamma$. The outcome is a lower
bound for the transcendence degree over $\bQ$ of the field $K$
generated by the coordinates of the points of $\Gamma$.

For further progress it would be desirable to have a tool that
encompasses both the criterion and the zero estimate by looking at
these small values globally as values of polynomials on the group
$G$ instead of looking at them one at a time, as elements of the
field $K$.  In \cite{R2001}, we conjecture such a ``small value
estimate'' for the group $\Ga\times\Gm$, and prove that it is
equivalent to Schanuel's conjecture.  In \cite{R2002}, we further
extend these ideas to the group $\Ga\times E$ where $E$ is an
elliptic curve defined over $\bQ$.

The present paper mainly deals with small value estimates for the
additive group $\Ga$ as a first step towards these conjectures.  The
following theorem provides an overview of our main results.  In its
formulation, the symbols $i$ and $j$ are restricted to integers. We
also write $H(P)$ to denote the \emph{height} of a polynomial
$P\in\ZT$, and $P^{[j]}$ to denote its $j$-th divided derivative
(see \S\ref{sec:prelim} for the precise definitions).

\begin{theorem}
 \label{thm:main}
Let $\xi$ be a transcendental complex number, let $\beta$, $\sigma$,
$\tau$ and $\nu$ be non-negative real numbers, let $n_0$ be a
positive integer, and let $(P_n)_{n\ge n_0}$ be a sequence of
non-zero polynomials in $\ZT$ satisfying $\deg(P_n) \le n$ and
$H(P_n)\le \exp(n^\beta)$ for each $n\ge n_0$.  The following six
statements hold.
\begin{enumerate}
\item[1)]
Let $r$ be a non-zero rational number. Suppose that $\beta>1$,
$\sigma + \tau < 1$ and $\nu > 1 + \beta - \sigma - \tau$. Then for
infinitely many $n$, we have
\[
 \max\big\{ |P_n^{[j]}(\xi+ir)| \,;\,
            0\le i\le n^\sigma,\, 0\le j \le n^\tau \big\}
 > \exp(-n^\nu).
\]
\item[2)]
Let $r$ be a positive rational number with $r\neq 1$.  Suppose that
$\beta > 1 + \sigma$, $\sigma + \tau < 1$ and $\nu > 1 + \beta -
\sigma - \tau$.  Then for infinitely many $n$, we have
\[
 \max\big\{ |P_n^{[j]}(r^i\xi)| \,;\,
            0\le i\le n^\sigma,\, 0\le j \le n^\tau \big\}
 > \exp(-n^\nu).
\]
\item[3)]
Suppose that $\beta > 1$, $(3/4)\sigma + \tau < 1$ and $\nu > 1 +
\beta - (3/4)\sigma - \tau$.  Then for infinitely many $n$, we have
\[
 \max\big\{ |P_n^{[j]}(i\xi)| \,;\,
            0\le i\le n^\sigma,\, 0\le j \le n^\tau \big\}
 > \exp(-n^\nu).
\]
\item[4)]
Let $r$ be a non-zero rational number. Suppose that $\beta>1$,
$(4/3)\sigma + \tau < 1$ and $\nu > 1 + \beta - (4/3)\sigma - \tau$.
Then for infinitely many $n$, we have
\[
 \max\big\{ |P_n^{[j]}(i_1\xi+i_2r)| \,;\,
            0\le i_1,i_2\le n^\sigma,\, 0\le j \le n^\tau \big\}
 > \exp(-n^\nu).
\]
\item[5)]
Let $\eta\in\bC$ be algebraic over $\bQ(\xi)$ with
$\eta\notin\bQ\xi$. Suppose that $\beta > 1$, $(3/2)\sigma + \tau <
1$ and $\nu> 1 + \beta - \sigma - \tau$. Then for infinitely many
$n$, we have
\[
 \max\big\{ |P_n^{[j]}(i_1\xi+i_2\eta)| \,;\,
            0\le i_1,i_2\le n^\sigma,\, 0\le j \le n^\tau \big\}
 > \exp(-n^\nu).
\]
\item[6)]
Let $\eta\in\bC$. Suppose that $\beta> 1$, $\sigma < 1$ and $\nu > 3
+ \beta - (11/4)\sigma$. Then for infinitely many $n$, we have
\[
 \max\big\{ |P_n(i\xi+\eta)| \,;\,
            0\le i\le n^{\sigma} \big\}
 > \exp(-n^\nu).
\]
\end{enumerate}
\end{theorem}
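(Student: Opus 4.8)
The plan is to argue by contradiction, reducing everything to the transcendence of $\xi$. Suppose there is an integer $n_1\ge n_0$ such that $|P_n(i\xi+\eta)|\le\exp(-n^\nu)$ for every $n\ge n_1$ and every integer $i$ with $0\le i\le n^\sigma$; write $N=\lfloor n^\sigma\rfloor$. Since a nonzero element of $\ZT$ of degree $0$ has modulus $\ge 1$ and $\nu>1$, we may discard finitely many indices and assume $\deg P_n\ge 1$ for all $n\ge n_1$; also $\xi\ne 0$. One elementary remark will be used repeatedly: the points $\eta,\eta+\xi,\dots,\eta+N\xi$ are pairwise at distance $\ge|\xi|$, so, since $|P(z)|\ge\rho^{\deg P}$ whenever $P\in\ZT$ has no root in the disc of radius $\rho$ about $z$, the polynomial $P_n$ has a root within $\exp(-n^{\nu-1})$ of each $\eta+i\xi$, hence $N+1$ distinct roots, one near each point of the progression.

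What distinguishes $(6)$ from $(1)$--$(5)$ is that $\eta$ is completely arbitrary and must be \emph{eliminated}. I would do this by passing to the finite differences of $P_n$ in the step variable: for $0\le\ell\le N$ put
\[
 \Phi^{(n)}_\ell(X,T)\;=\;\sum_{i=0}^{\ell}(-1)^{\ell-i}\binom{\ell}{i}\,P_n(T+iX)\ \in\ \bZ[X,T],
\]
a polynomial of total degree $\le n$ and height $\exp\!\big(n^\beta+O(n\log n)\big)$. Since $\Phi^{(n)}_\ell(\xi,\eta)$ is the $\ell$-th forward difference $(\Delta_\xi^{\ell}P_n)(\eta)$, the triangle inequality gives $|\Phi^{(n)}_\ell(\xi,\eta)|\le 2^{\ell}\max_{0\le i\le\ell}|P_n(i\xi+\eta)|\le\exp(-\tfrac12 n^\nu)$ for $\ell\le N$ and $n$ large. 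Thus the point $(\xi,\eta)\in\bC^2$ is simultaneously, and very closely, approximated by the zero sets of the whole family of integer polynomials $(\Phi^{(n)}_\ell)_{0\le\ell\le N,\ n\ge n_1}$, all of controlled degree and height. (Equivalently one may stay in one variable, forming for consecutive indices resultants in the step variable, e.g.\ $\Res_T(P_n(T),P_n(T+X))\in\ZT$, nonzero since $\deg P_n\ge 1$, of degree $O(n^2)$ and log-height $O(n^{1+\beta})$, and, thanks to the roots of $P_n$ located above, exponentially small at $\xi$; the two-variable form is cleaner for the arithmetic.)

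I would then invoke a Gel'fond-type criterion: a point of $\bC^2$ of transcendence degree $2$ over $\bQ$ cannot be approximated this well by the zero sets of that many integer polynomials of such degree and height. Hence $(\xi,\eta)$ must be algebraically dependent; tracing the mechanism of the criterion --- the dependence arises from a common factor of the approximating family --- and using that $\Phi^{(n)}_0(X,T)=P_n(T)$ does \emph{not} involve $X$, the dependence curve is a union of vertical lines, so $\eta$ is algebraic and a root of a fixed non-constant $S\in\ZT$ dividing $P_n$ for infinitely many $n$. A descent on such common factors --- peel off $S$ and use that $S$, fixed with finitely many roots, is bounded away from $0$ at all but $O(1)$ of the points $\eta+i\xi$ (again because $\nu>1$) --- returns us to the case of no such common factor, where the criterion gives an outright contradiction. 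Either way the transcendence of $\xi$ is contradicted, proving $(6)$.

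The real work, and the main obstacle, is quantitative: one must run the criterion so that, with total degree $\sim n$, log-height $\sim n^\beta$, about $n^\sigma$ approximating polynomials, and approximation exponent $\sim n^\nu$, the resulting inequality is exactly $\nu>3+\beta-\tfrac{11}{4}\sigma$. The $3+\beta$ is the price of a first-stage criterion for a plane point --- heuristically one power of $n$ apiece from the dimension, the degree, and the height of the auxiliary data after elimination --- while the $\tfrac{11}{4}\sigma$ is the gain harvested from the arithmetic progression of near-zeros, which forces one to use not one near-vanishing per index but all $N+1$ of them, to exploit their triangular redundancy, and to bound the far-away roots of $P_n$ through Mahler's measure. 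A crude treatment yields only $\nu>2+\beta-\sigma$ in the coprime case and breaks down on non-coprime consecutive $P_n$; carrying out this optimization together with the descent on common factors --- precisely what a well-designed Gel'fond-type criterion packages --- is the substance of the proof.
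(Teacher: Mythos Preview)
Your instinct to eliminate $\eta$ via a resultant is right, and the paper does exactly this: it forms $R(U)=\Res_T\big(P(T),P(T+U)\big)$. But two essential ideas are missing from your proposal, and without them the argument cannot reach the exponent $\tfrac{11}{4}\sigma$.

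First, the finite-difference polynomials $\Phi^{(n)}_\ell(X,T)$ do not eliminate $\eta$ at all: they still live in two variables and are only known to be small at the single point $(\xi,\eta)$, so invoking a two-variable Gel'fond criterion there gives nothing beyond what the raw data $|P_n(\eta)|\le\exp(-n^\nu)$ already gave. Your subsequent claim that the ``dependence curve is a union of vertical lines'' because $\Phi^{(n)}_0$ is free of $X$ is not justified by any form of the criterion. The paper instead stays in one variable throughout. The key structural observation you miss is that the arithmetic progression $\{\eta+i\xi:0\le i\le n^\sigma\}$ can be written as $E+F$ with $E=\{i\xi:0\le i\le s\}$ and $F=E+\eta$, and then Proposition~\ref{prop:result:E+F} shows, via an interpolation determinant, that $R$ together with its \emph{derivatives} $R^{[k]}$ for $0\le k\le s/2$ is small at every point of $E$. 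The derivatives come for free from the progression; this is what your root-counting remark hints at but does not exploit.

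Second, and more seriously, having produced (after the rescaling $n\mapsto\sqrt n$) a polynomial $R$ of degree $\le n$, log-height $\sim n^{(1+\beta)/2}$, with $|R^{[j]}(i\xi)|\le\exp(-n^{(\nu+\sigma)/2})$ for $0\le i,j\lesssim n^{\sigma/2}$, the paper does \emph{not} apply Gel'fond's criterion. It applies part~(3) of the same theorem, whose proof occupies Sections~\ref{sec:inter}--\ref{sec:multxi} and rests on gcd estimates for dilations $P(aT)$ over multiplicatively independent $a$ (Theorem~\ref{intro:thm:gcd}) and a Zarankiewicz-type inequality (Theorem~\ref{intro:thm:prod}). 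Part~(3) contributes a gain of $(3/4)\sigma'+\tau'$ with $\sigma'=\tau'=\sigma/2$; combining with the extra $\sigma$ already present in the exponent $(\nu+\sigma)/2$ and undoing the square-root rescaling yields precisely $\nu>3+\beta-\tfrac{11}{4}\sigma$. A bare Gel'fond criterion in place of part~(3) would give only $\nu'>1+\beta'$, i.e.\ $\nu>3+\beta-\sigma$, which is your ``crude treatment''. The ``optimization together with the descent on common factors'' you allude to is not a sharpening of Gel'fond's criterion; it is the entire machinery behind part~(3), and your sketch gives no indication of how to recover it.
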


The second statement of the theorem is the only small value estimate
that we shall prove for the multiplicative group $\Gm$.  All the
others concern the additive group $\Ga$.  The statements 1) and 2)
can be viewed as extensions of Theorem 2.3 of \cite{R2005} in a
context where the degree of the polynomials is unbounded and the
number of points of evaluation is small compared to the degree. When
$\sigma=0$, they essentially reduce to Proposition 1 of \cite{LR1}.
In view of Dirichlet box principle, both results show a best
possible dependence in the parameter $\nu$ (see Proposition
\ref{prop:Dirichlet} from Appendix \ref{sec:appendix1}). The
statement 3) is our main result. Dirichlet box principle shows that
it would be false for a value of $\nu$ smaller than
$1+\beta-\sigma-\tau$.  This shows a gap of $\sigma/4$ compared to
our actual lower bound on $\nu$.  Similarly 4), 5) and 6) show
respectively a gap of $(2/3)\sigma$, $\sigma$ and $2-(7/4)\sigma$ in
the dependence in $\nu$ compared to the box principle (see Appendix
\ref{sec:appendix1}).

The proof of all results proceeds by contradiction and ultimately
rely on a version of Gel'fond's criterion of algebraic independence
that we recall in the next section.   Section \ref{sec:result},
inspired from \cite[\S6]{LR1}, deals with estimates for the
resultant of polynomials in one variable taking into account the
absolute values of the first derivatives of these polynomials at the
points of a finite set $E$. Section \ref{sec:trans} borrow ideas
from the proof of zero estimates to give upper bound for the degree
and height of an irreducible polynomial dividing the first
derivatives of polynomials of the form $P(aT+b)$ where $P\in\QT$ is
fixed and $(a,b)$ runs through a finite subset of $\Qplus\times\bQ$.
These tools are combined in \S\ref{sec:basic} to prove 1), 2) and
4). Statement 5) is proved in \S\ref{sec:further} in a more general
form involving subgroups of arbitrary rank. Besides the tools that
have already been mentioned, its proof also uses the following
result established in \S\ref{sec:gcd} as a consequence of a
combinatorial result from \S\ref{sec:inter}:

\begin{theorem}
 \label{intro:thm:gcd}
Let $\beta$, $\delta$ and $\mu$ be positive real numbers with $\mu <
1 < \beta$, let $A$ be the set of all prime numbers $p$ with $p\le
n^\mu$, let $P$ be a non-zero polynomial of $\QT$ of degree at most
$n$ and height at most $\exp(n^\beta)$ with $P(0)\neq 0$, and let
$Q$ be the greatest common divisor of the polynomials $P(aT)$ with
$a\in A$. If $n$ is sufficiently large as a function of $\beta$,
$\delta$ and $\mu$, we have $\deg(Q)\le n^{1-\mu+\delta}$ and $H(Q)
\le \exp(n^{\beta-\mu+\delta})$.
\end{theorem}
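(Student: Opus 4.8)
The plan is to bound the degree and height of $Q$ by exhibiting, for a well-chosen prime $q\le n^\mu$, a polynomial that is divisible by $Q$ and whose degree and height we can control. The key observation is that if $\alpha$ is a root of $Q$, then for every $a\in A$ the number $a\alpha$ is a root of $P$; in particular, for two distinct primes $p,q\in A$, both $p\alpha$ and $q\alpha$ are roots of $P$. Writing out this condition for all pairs of primes in $A$ gives a strong constraint on the multiset of roots of $Q$: each root $\alpha$ of $Q$, when multiplied by the $\card(A)$ distinct primes of $A$, lands in the root multiset of $P$, which has only $\le n$ elements. So I would first extract a purely combinatorial statement — the one promised from \S\ref{sec:inter} — that controls how many elements $\alpha$ a finite set $S\subset\bC^\times$ of size $\le n$ can contain together with a collection of ratios $p/q$ ($p,q$ prime, $\le n^\mu$) for which $\alpha\cdot(p/q)\in S$ as well. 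Such a set forces many multiplicative chains $\alpha, (p_1/q_1)\alpha, \dots$ inside $S$, and a counting argument bounds its size by roughly $n^{1-\mu+\delta}$. This yields $\deg(Q)\le n^{1-\mu+\delta}$.

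For the height bound, the natural route is to use the height-of-gcd machinery: $H(Q)$ is essentially controlled by the Mahler measure $M(Q)$, and $M(Q)\le M(P(aT))$ for each $a\in A$ since $Q\mid P(aT)$. Now $M(P(aT))$ differs from $M(P)$ only by a factor involving $a^{\deg P}$ times the leading coefficient adjustments, so a single such inequality gives $\log M(Q)\le \log M(P) + \deg(P)\log a \le n^\beta + O(n\log n)$, which is far too weak — it loses nothing from the primes. The point of taking the gcd over \emph{all} $a\in A$ is that $Q$ must divide the gcd, and the gcd's Mahler measure is governed not by one $P(aT)$ but by the common factor structure. Concretely, I would argue that a root $\alpha$ of $Q$ of large absolute value would force $a\alpha$ to be a root of $P$ for all $\card(A)\approx n^\mu/\log n$ primes $a$, hence $P$ would have $\gtrsim n^\mu/\log n$ roots of absolute value $\gtrsim |\alpha|\cdot n^{-\mu}$; feeding this into the product formula for $M(P)\le\exp(n^\beta)$ caps $\log|\alpha|$, and summing over the $\le n^{1-\mu+\delta}$ roots of $Q$ gives $\log M(Q)\le n^{\beta-\mu+\delta}$ after absorbing the logarithmic losses into $\delta$.

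The steps, in order: (i) reduce both bounds to statements about the root multisets of $P$ and $Q$ under multiplication by primes $\le n^\mu$; (ii) invoke (or set up) the combinatorial lemma of \S\ref{sec:inter} to bound $\card$ of a finite subset of $\bC^\times$ closed under ``enough'' of the multiplicative shifts by ratios of such primes, giving $\deg(Q)\le n^{1-\mu+\delta}$; (iii) run the Mahler-measure argument: each root of $Q$ spawns $\card(A)$ roots of $P$ in a thin annulus, so the product formula for $M(P)$ bounds the contribution of that root to $M(Q)$ by roughly $\exp(n^{\beta-\mu})$ per root, and multiplying over the controlled number of roots of $Q$ gives the height bound; (iv) translate $M(Q)$ back to $H(Q)$ via the standard comparison, with the $\deg(Q)$-sized discrepancy absorbed into $\delta$ since $\deg(Q)=o(n^{\beta-\mu+\delta})$ as $\beta>1>\mu$.

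The main obstacle I anticipate is step (ii): the naive pigeonhole that ``$\card(A)$ primes times $\deg(Q)$ roots must fit among $\le n$ roots of $P$'' only gives $\deg(Q)\le n^{1-\mu}\log n$ in the best case and, more seriously, says nothing unless the images $a\alpha$ are forced to be distinct across both $a$ and $\alpha$ — collisions $a\alpha=a'\alpha'$ are exactly the multiplicative relations one must rule out or exploit. Handling this properly is what requires the dedicated combinatorial result from \S\ref{sec:inter} rather than a one-line count: one needs that a subset of $\bC^\times$ admitting many ratio-shifts by primes in $A$ cannot be much larger than $n^{1-\mu}$, and the extra $n^\delta$ is the price for making the chain/counting argument robust to these collisions. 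The height estimate, by contrast, should follow fairly mechanically once the annulus-localization of the roots of $P$ is in hand.
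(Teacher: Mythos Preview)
Your outline for the degree bound is on the right track and matches the paper's route: after reducing to $G$-pure polynomials with simple roots (Lemma~\ref{gcd:lemmaG}), one encodes the irreducible factors of $P$ by a finite set $F\subset\Zs$ and those of $Q$ by $E=(F-\ue_1)\cap\cdots\cap(F-\ue_s)$, and the combinatorial Proposition~\ref{inter:propC2} yields $|E|\lesssim |F|/s$, hence $\deg(Q)\lesssim\deg(P)/|A|$.

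The height argument, however, has a genuine gap. Your step~(iii) bounds each root individually: since the $|A|$ numbers $a\alpha$ ($a\in A$) are distinct roots of $P$ with $\log^+|a\alpha|\ge\log^+|\alpha|$, the inequality $\log M(P)\le n^\beta$ forces $\log^+|\alpha|\lesssim n^\beta/|A|\approx n^{\beta-\mu}$. But then ``multiplying over the $\le n^{1-\mu+\delta}$ roots of $Q$'' gives
\[
 \log M(Q)\ \lesssim\ n^{1-\mu+\delta}\cdot n^{\beta-\mu}\ =\ n^{\,1+\beta-2\mu+\delta},
\]
which is weaker than the target $n^{\beta-\mu+\delta}$ by a full factor $n^{1-\mu}$ in the exponent. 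To recover that factor you would need the \emph{summed} inequality $\sum_{\alpha}\log^+|\alpha|\lesssim (\log M(P))/|A|$, equivalently a bound
\[
 \sum_{a\in A}\ \sum_{\alpha}\log^+|a\alpha|\ \le\ C\sum_{\beta:\,P(\beta)=0}\log^+|\beta|
\]
with $C$ independent of $|A|$. This is a weighted collision estimate on the map $(\alpha,a)\mapsto a\alpha$, and it does not follow from the bare cardinality bound $|E|\lesssim|F|/s$: a single root $\beta$ of $P$ carrying large weight $\log^+|\beta|$ may lie in many fibres. This is precisely why Proposition~\ref{inter:propC2} proves more than the inequality $|F|\ge\frac{s-\ell}{2(\ell+1)}|E|$: it also supplies partitions $E=E_1\amalg\cdots\amalg E_r$ and $F=F_1\amalg\cdots\amalg F_{r+1}$ with each $E_i\subseteq C_\ell(\ux_i)$, each $F_i\subseteq\cO(E_i)$, and $|F_i|\ge\frac{s-\ell}{2(\ell+1)}|E_i|$. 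Because each block has $\ell_1$-diameter at most $2\ell+1$, the heights of the corresponding factors are essentially constant on a block, and the blockwise cardinality ratio transfers directly to the weighted sums; this is how the paper obtains $\log H(Q)\lesssim\big(\log H(P)+O(\deg(P)\log c_A)\big)/|A|$ in Theorem~\ref{gcd:thmG}. Your Mahler-measure route can be completed, but only after importing this partition structure; the claim that step~(iii) is ``fairly mechanical'' once the degree is controlled is exactly where the argument breaks.
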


The proof of 3) given in \S\ref{sec:multxi} furthermore uses the
following result proved as a consequence of another combinatorial
statement from \S\ref{sec:Zaran}, related to Zarankiewicz problem:

\begin{theorem}
 \label{intro:thm:prod}
Let $\alpha$, $\beta$, $\delta$ and $\mu$ be positive real numbers
with $2\mu < \alpha < \beta$.  For each integer $n\ge 1$, let $A_n$
denote the set of all prime numbers $p$ with $p\le n^\mu$, and $B_n$
denote the set of all prime numbers $p$ with $n^\mu< p\le n^{2\mu}$.
For infinitely many $n$, there exists no non-zero polynomial
$P\in\ZT$ of degree at most $n^\alpha$ and height at most
$\exp(n^\beta)$ satisfying $\prod_{a\in A_n}\prod_{b\in B_n}
|P(ab\xi)| \le \exp(-n^{\alpha+\beta+2\mu+\delta})$.
\end{theorem}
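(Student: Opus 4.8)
The plan is to argue by contradiction. The conclusion fails exactly when, for every sufficiently large $n$, there is a non-zero $P_n\in\ZT$ with $\deg P_n\le n^\alpha$, $H(P_n)\le\exp(n^\beta)$ and $\prod_{a\in A_n}\prod_{b\in B_n}|P_n(ab\xi)|\le\exp(-n^{\alpha+\beta+2\mu+\delta})$, and I would fix such a family and aim to derive a contradiction. After dividing out the largest power of $T$ (which changes neither the degree nor the height bound, and which only shrinks the product since $|ab\xi|\ge1$ for $n$ large) one may assume $P_n(0)\neq0$, so that $P_n(ab\xi)\neq0$ for all $a,b$ by transcendence of $\xi$. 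The integers $ab$ with $a\in A_n$, $b\in B_n$ are pairwise distinct (each has a single prime factor $\le n^\mu$ and a single one $>n^\mu$), hence form $N:=|A_n|\,|B_n|$ distinct integers $\le n^{3\mu}$, with $n^{3\mu}(\log n)^{-3}\le N\le n^{3\mu}$ for $n$ large by the prime number theorem. Writing $u_{a,b}=-\log|P_n(ab\xi)|$, the hypothesis becomes $\sum_{a,b}u_{a,b}\ge n^{\alpha+\beta+2\mu+\delta}$, while a Liouville upper bound for $|P_n(ab\xi)|$ — using $\deg P_n\le n^\alpha<n^\beta$ and $\log(ab)=O(\log n)$ — gives $u_{a,b}\ge -c_1n^\beta$ for some $c_1=c_1(\xi)$.

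I would then analyse the array $(u_{a,b})$ on the bipartite grid $A_n\times B_n$, extracting in every case either a sequence of auxiliary integer polynomials eligible for Gel'fond's criterion, or a non-zero rational integer of absolute value $<1$. The first point is that the weight localises: at most $\deg P_n\le n^\alpha$ pairs can have $u_{a,b}>c_2n^\alpha$ for a suitable $c_2=c_2(\xi)$, since such a pair puts $ab\xi$ within $\tfrac12|\xi|$ of a root of $P_n$ while distinct points $ab\xi$ are $\ge|\xi|$ apart; the remaining pairs contribute at most $Nc_2n^\alpha\le c_2n^{\alpha+3\mu}=o(n^{\alpha+\beta+2\mu+\delta})$ since $\beta>\mu$. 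So these at most $n^\alpha$ ``extremal'' pairs carry at least half of the total weight. Now a dichotomy. If some single extremal pair $(a_0,b_0)$ has $u_{a_0,b_0}\ge n^{\alpha+\beta}\log n$, I would take $R_n(T)=P_n(a_0b_0T)\in\ZT$, which has $\deg R_n\le n^\alpha$, $\log H(R_n)\le2n^\beta$ and $-\log|R_n(\xi)|\ge n^{\alpha+\beta}\log n$, so that $-\log|R_n(\xi)|$ exceeds $(\deg R_{n+1})(\log H(R_{n+1}))\sim n^{\alpha+\beta}$ by a factor tending to infinity. Since the complementary case — no such heavy pair — will be shown to yield a contradiction already for a single large $n$, the heavy-pair case must in fact hold for all large $n$, and then the cofinite sequence $(R_n)_n$ contradicts the transcendence of $\xi$ by the version of Gel'fond's criterion recalled in \S\ref{sec:prelim}.

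It remains to rule out the case in which the weight is spread, i.e. $u_{a,b}<n^{\alpha+\beta}\log n$ for all $a,b$. Then, taking $\theta$ of order $n^{\alpha+\beta-\mu+\delta}$, the small-value bipartite graph $G_n=\{(a,b):u_{a,b}>\theta\}$ has at least $n^{2\mu+\delta}(\log n)^{-1}$ edges (all among the extremal pairs), so $G_n$ is comparatively dense in $A_n\times B_n$. Here the combinatorial input enters: applying the Zarankiewicz-type estimate of \S\ref{sec:Zaran} to $G_n$ — and exploiting that the edges are indexed by the genuinely distinct products $ab$ — should produce a complete bipartite block, namely a fixed pair of primes $a_0,a_1\in A_n$ and a set $B'_n\subseteq B_n$ of size a positive power of $n$ with $|P_n(a_ib\xi)|\le\exp(-\theta)$ for $i=0,1$ and all $b\in B'_n$. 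Stripping off the common factor $Q_n=\gcd_{a\in A_n}P_n(aT)$, whose degree and height are controlled by Theorem \ref{intro:thm:gcd} (applied at scale $n^\alpha$; admissible since $2\mu<\alpha<\beta$), and using that control to discard common factors, one is left with two coprime integer polynomials still extremely small at all the points $b\xi$, $b\in B'_n$; their resultant is a non-zero rational integer, and the resultant estimate of \S\ref{sec:result} — which bounds a resultant in terms of the values of the two polynomials on a finite set together with their degrees and heights — makes it $<1$ in absolute value, the desired contradiction. (In these estimates $\alpha<\beta$ is what keeps $\log H(P_n(abT))\le\log H(P_n)+\deg P_n\cdot\log(ab)\le2n^\beta$, and $2\mu<\alpha$ is what keeps the number $|B'_n|\le|B_n|\sim n^{2\mu}$ of evaluation points below the degree $\le n^\alpha$, so the resultant estimate loses nothing.)

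The step I expect to be the main obstacle is precisely this calibration in the spread case: the block extracted by the \S\ref{sec:Zaran} estimate must be at once large enough that the aggregate smallness $|B'_n|\,\theta$ beats the degree-height term $\sim(\deg P_n)\big(\log H(P_n)+\deg P_n\log n\big)\sim n^{\alpha+\beta}$ in the resultant estimate, and small enough to be compatible with $|A_n|\sim n^\mu$, $|B_n|\sim n^{2\mu}$. Forcing this to succeed uniformly over all admissible $\alpha,\beta,\delta,\mu$ is what requires the combinatorial statement of \S\ref{sec:Zaran} to go beyond a bare Zarankiewicz bound — by using the multiplicative structure of the products $ab$, or by iterating the extraction — and it is where the surplus exponent $2\mu+\delta$ over the box-principle value $\alpha+\beta$ gets spent. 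The reason the whole argument is routed through the sequential criterion of Gel'fond and through integer resultants, rather than through a single lower bound for some $|P_n(ab\xi)|$, is that $\xi$ is only assumed transcendental, with no transcendence measure at our disposal; the combinatorics is what stands in for such a measure.
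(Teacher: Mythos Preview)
Your spread case has a genuine gap at the coprimality step. Stripping off $Q_n=\gcd_{a\in A_n}P_n(aT)$ does \emph{not} leave $P_n(a_0T)/Q_n$ and $P_n(a_1T)/Q_n$ coprime: the gcd over all of $A_n$ can be strictly smaller than $\gcd\big(P_n(a_0T),P_n(a_1T)\big)$. (Take for instance $P=\prod_{i=0}^m R(6^iT)$ with $R$ irreducible; then $P(2T)$ and $P(3T)$ share the factor $\prod_{i=1}^m R(6^iT)$, while $P(5T)$ is coprime to both, so the full gcd over $\{2,3,5,\dots\}$ is trivial.) If instead you strip the pairwise gcd, you lose all control on its degree and height, since Theorem~\ref{intro:thm:gcd} needs the whole set $A_n$. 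Either way your resultant is not a non-zero integer bounded by the values of the two polynomials, and the argument stalls. The difficulty you flag --- calibration of the extracted block --- is downstream of this and never gets reached.

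The paper's proof avoids this entirely, and does \emph{not} use Theorem~\ref{intro:thm:gcd}. It first linearizes $P_n$ to a primary polynomial $Q$ (a power of an irreducible) carrying the smallness; once $Q$ is primary and not a power of $T$, Lemma~\ref{trans:lemmaH2} makes $Q(a_1T)$ and $Q(a_2T)$ automatically coprime for \emph{every} pair $a_1\neq a_2$ in $A_n$. The resultant estimate of \S\ref{sec:result} then gives, for each such pair, a uniform bound $\sum_{\xi\in E}\min\{\varphi(a_1,\xi),\varphi(a_2,\xi)\}\le\kappa_2$ on the pairwise overlap of the ``smallness profile'' $\varphi$. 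At that point the Zarankiewicz result of \S\ref{sec:Zaran} is used in the opposite direction from yours: not to extract a $K_{2,s}$ block, but as the sum bound $\sum_{a,\xi}\varphi(a,\xi)\le \kappa_1|E|+\kappa_2\binom{|A|}{2}$, which together with $\binom{|A_n|}{2}\le|B_n|$ forces a single pair $(a,\xi)$ with $\varphi(a,\xi)$ large. One substitution $S(T)=Q(aT)$ then feeds directly into Gel'fond's criterion, with no heavy/spread dichotomy.
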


Finally, 6) is proved in \S\ref{sec:higher} as a consequence of 3)
after elimination of $\eta$ through a resultant, upon observing that
this resultant as well as its first derivatives are small at
multiples of $\xi$.

\medskip\noindent \textbf{Sketch of proof of 3):} In order to help
the reader find his way through this paper, we conclude this
introduction by a brief sketch of proof of 3). We proceed by
contradiction, assuming on the contrary that for each sufficiently
large $n$ the polynomial $P_n$ satisfies $|P^{[j]}(i\xi)| \le
\exp(-n^\nu)$ for $i=1,\dots,[n^\sigma]$ and $j=0,1,\dots,[n^\tau]$.
Without loss of generality, after division of each $P_n$ by a
suitable power of $T$, we may assume that these polynomials do not
vanish at $0$. Define $A_n$ and $B_n$ as in Theorem
\ref{intro:thm:prod} for the choice of $\mu=\sigma/4$, and let $Q_n$
be the greatest common divisor of the polynomials $P_n^{[j]}(aT)$
with $a\in A_n$ and $j=0,1,\dots,[n^\tau/2]$.  Upon observing that
the latter family of polynomials take small values at the points
$ab\xi$ with $a\in A_n$ and $b\in B_n$, along with their derivatives
of order at most $[n^\tau/2]$, we deduce that $\prod_{a\in A_n}
\prod_{b\in B_n} |Q_n(ab\xi)| \le \exp(-n^{1+\beta-\tau+5\delta})$
for some positive $\delta$ which is independent of $n$.  By Theorem
\ref{intro:thm:gcd}, we further know that $Q_n$ has degree at most
$n^{1-\sigma/4+\delta}$ and height at most
$\exp(n^{\beta-\sigma/4+\delta})$.  By a standard linearization
process described in \S\ref{sec:prelim}, we deduce that $Q_n$ admits
an irreducible factor $R_n$ satisfying $\prod_{a\in A_n} \prod_{b\in
B_n} |R_n(ab\xi)| \le
\exp\big(-n^{\sigma/4-\tau+3\delta}(n^\beta\deg(R_n)+n\log
H(R_n))\big)$.  By independent means, we also know that $R_n$ has
degree at most $n^{1-\sigma/4-\tau+\delta}$ and height at most
$\exp(n^{\beta-\sigma/4-\tau+\delta})$.  Then, we deduce that there
exists a power $S_n$ of $R_n$ whose degree and height satisfy the
same estimates, with moreover $\prod_{a\in A_n} \prod_{b\in B_n}
|S_n(ab\xi)| \le \exp\big(-n^{1+\beta-2\tau+3\delta}\big)$. This
contradicts Theorem \ref{intro:thm:prod}.

%
%
\section{Notation and preliminaries}
\label{sec:prelim}

We denote respectively by $\Qmult$ and $\Cmult$ the multiplicative
groups of $\bQ$ and $\bC$. We also write $\Qplus$ for the
multiplicative group of positive rational numbers, and $\bN^*$ for
the set of positive integers.  We denote by $|E|$ the cardinality of
a set $E$.  Given subsets $A$ and $B$ of $\bC$, we write $A+B$
(resp.\ $AB$) to denote the set of all sums $a+b$ (resp.\ products
$ab$) with $a\in A$ and $b\in B$. Throughout this paper, the symbols
$i,j,k$ are restricted to integers. For $P\in\CT$ and $j\ge 0$, we
denote by $P^{[j]}$ the quotient by $j!$ of the $j$-th derivative of
$P$.

We define the \emph{norm} $\|\ux\|$ of any point $\ux$ in $\bC^n$ to
be its maximum norm.  Similarly, we define the \emph{norm} $\|P\|$
of a polynomial $P\in\bC[T_1,\dots,T_m]$ to be the maximum of the
absolute values of its coefficients.  When $\ux$ is a non-zero
element of $\bQ^n$, we define its \emph{content} $\cont(\ux)$ to be
the unique positive rational number $r$ such that $r^{-1}\ux$ is a
\emph{primitive} point of $\bZ^n$, namely a point of $\bZ^n$ with
relatively prime coordinates.  We also define its \emph{height}
$H(\ux)$ to be the ratio $\|\ux\|/\cont(\ux)$.  By extension, we
define respectively the \emph{content} $\cont(P)$ and \emph{height}
$H(P)$ of a non-zero polynomial $P\in\bQ[T_1,\dots,T_m]$ to be the
content and height of its coefficient vector.  Accordingly, we have
$H(P)=\|P\|/\cont(P)$. This notion of height is \emph{projective} as
we have $H(a\ux) = H(\ux)$ and $H(aP) = H(P)$ for any $a\in\Qmult$.
For a single rational number $x$, we adopt a slightly different
convention, and define its \emph{height} $H(x)$ to be the
inhomogeneous height of $x$, namely the height of the point
$(1,x)\in\bQ^2$.  This gives $H(x)=\max(|p|,|q|)$ if $p/q$ is the
reduced form of $x$.

We will frequently use the well-known fact that for any
$P_1,\dots,P_s\in\CT$ with product $P=P_1\cdots P_s$, we have
\[
 e^{-\deg(P)}\|P\| \le \|P_1\| \cdots \|P_s\| \le e^{\deg(P)} \|P\|
\]
\cite[Ch.~III, \S4, Lemma 2]{G}.  As the content is a multiplicative
function on $\QT\setminus\{0\}$, it follows that, for non-zero
polynomials $P_1,\dots,P_s\in\QT$, the same inequalities hold with
the norm replaced by the height.  This means that the height is
essentially multiplicative. In the sequel, we will also require the
following lemma which formalizes a standard procedure of
``linearization'':

\begin{lemma}
 \label{lemma:linearization}
Let $c$, $n$, $\rho$ and $X$ be positive real numbers with $e^n \le
X$, and let $\xi_1, \dots, \xi_s$ be a finite sequence of complex
numbers, not necessarily distinct. Suppose that there exists a
non-zero polynomial $P\in\QT$ of degree at most $\rho n$ and height
at most $X^\rho$ satisfying
\begin{equation}
 \label{lemma:lin:eq1}
 \prod_{i=1}^s \frac{|P(\xi_i)|}{\cont(P)}
 <
 \big( X^{\deg(P)} H(P)^n \big)^{-c/\rho}
\end{equation}
or the stronger condition
\begin{equation}
 \label{lemma:lin:eq2}
 \prod_{i=1}^s \frac{|P(\xi_i)|}{\cont(P)}
 <
 X^{-2cn}.
\end{equation}
Then, a) there exists an irreducible factor $R$ of $P$ in $\QT$
satisfying
\begin{equation}
 \label{lemma:lin:eq3}
 \prod_{i=1}^s \frac{|R(\xi_i)|}{\cont(R)}
 <
 \big( X^{\deg(R)} H(R)^n \big)^{-c/(2\rho)},
\end{equation}
and b) there exists an integer $k\ge 1$ such that the polynomial
$Q=R^k$ satisfies
\begin{equation}
 \label{lemma:lin:eq4}
 \deg(Q)\le \rho n,
 \quad
 H(Q) \le X^{2\rho}
 \et
 \prod_{i=1}^s \frac{|Q(\xi_i)|}{\cont(Q)}
 <
 X^{-cn/4}.
\end{equation}
\end{lemma}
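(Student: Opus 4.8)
The plan is to factor $P$ into irreducible factors over $\bQ$ and argue that one of them must already account for a large portion of the smallness of the product $\prod_i |P(\xi_i)|/\cont(P)$. Write $P = c_0 R_1^{e_1}\cdots R_t^{e_t}$ with $R_1,\dots,R_t$ distinct irreducible polynomials in $\ZT$ (primitive, say) and $c_0 = \cont(P)$, so that for each $i$ we have $|P(\xi_i)|/\cont(P) = \prod_{\ell=1}^t (|R_\ell(\xi_i)|/\cont(R_\ell))^{e_\ell}$, using that $\cont(R_\ell)$ is a positive constant (equal to $1$ with the primitive normalization, but keeping it makes the bookkeeping uniform). Since the height is essentially multiplicative, we have $\sum_\ell e_\ell(\deg R_\ell + \deg R_\ell) \le 2\deg P$ and $\sum_\ell e_\ell \log H(R_\ell)$ controls $\log H(P)$ up to an additive $\deg(P)$ term. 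The key inequality to exploit is therefore
\[
 \prod_{\ell=1}^t \Big( \prod_{i=1}^s \frac{|R_\ell(\xi_i)|}{\cont(R_\ell)} \Big)^{e_\ell}
 = \prod_{i=1}^s \frac{|P(\xi_i)|}{\cont(P)}
 < \big( X^{\deg P} H(P)^n \big)^{-c/\rho},
\]
and to compare the exponent on the right with $\sum_\ell e_\ell \big( X^{\deg R_\ell} H(R_\ell)^n \big)^{-c/(2\rho)}$-type weights by a pigeonhole over $\ell$.

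First I would establish part a). Set $D_\ell = \deg R_\ell$ and $L_\ell = \log H(R_\ell)$, and write $W_\ell = D_\ell \log X + n L_\ell$, so the target bound \eqref{lemma:lin:eq3} for $R = R_\ell$ reads $\prod_i |R_\ell(\xi_i)|/\cont(R_\ell) < \exp(-(c/2\rho) W_\ell)$. Suppose, for contradiction, that \eqref{lemma:lin:eq3} fails for every $\ell$, i.e. $-\log \prod_i |R_\ell(\xi_i)|/\cont(R_\ell) \le (c/2\rho) W_\ell$ for all $\ell$. Multiplying these with weights $e_\ell$ and summing gives $-\log \prod_i |P(\xi_i)|/\cont(P) \le (c/2\rho)\sum_\ell e_\ell W_\ell$. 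Now $\sum_\ell e_\ell W_\ell = (\log X)\sum_\ell e_\ell D_\ell + n\sum_\ell e_\ell L_\ell \le (\log X)\deg(P) + n(\log H(P) + \deg P) \le 2(\log X)\big(\deg(P) + n^{-1}\cdot 0\big)$; more carefully, using $e^n \le X$ so that $n \le \log X$, one gets $\sum_\ell e_\ell W_\ell \le 2\big((\log X)\deg(P) + n\log H(P)\big) = 2\log\big(X^{\deg P} H(P)^n\big)$. Hence $-\log\prod_i |P(\xi_i)|/\cont(P) \le (c/\rho)\log\big(X^{\deg P}H(P)^n\big)$, contradicting the hypothesis \eqref{lemma:lin:eq1}. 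Under the stronger hypothesis \eqref{lemma:lin:eq2} one runs the same argument but bounds $\log\big(X^{\deg P}H(P)^n\big) \le (\rho n)(\log X) + n(\rho \log X) = 2\rho n\log X$ using $\deg P \le \rho n$ and $H(P) \le X^\rho$, recovering the same contradiction; so in either case some $R = R_\ell$ satisfies \eqref{lemma:lin:eq3}.

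Next I would deduce part b) from part a). Fix the irreducible factor $R$ given by a), with $D = \deg R$, and consider its powers $R^k$. I want $k\ge 1$ with $kD \le \rho n$ (so take $k = \lfloor \rho n / D\rfloor$, which is $\ge 1$ provided $D \le \rho n$; this holds because $R \mid P$ gives $D \le \deg P \le \rho n$). Then $Q = R^k$ has $\deg Q = kD \le \rho n$ and, by essential multiplicativity of the height, $H(Q) \le e^{kD} H(R)^k \le e^{\deg P} H(P) \le e^{\rho n} X^\rho \le X^{2\rho}$ using $e^n \le X$. For the product bound, \eqref{lemma:lin:eq3} gives $-\log\prod_i |R(\xi_i)|/\cont(R) > (c/2\rho)(D\log X + nL)$ with $L = \log H(R) \ge 0$, hence in particular $> (c/2\rho)\,D\log X$ (this is where I need $D\ge 1$, i.e. $R$ nonconstant, which holds since an irreducible factor in $\QT$ of positive-degree content considerations — $R$ is irreducible hence nonconstant). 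Therefore $-\log\prod_i |Q(\xi_i)|/\cont(Q) = k\big(-\log\prod_i |R(\xi_i)|/\cont(R)\big) > k\,(c/2\rho)\,D\log X \ge (c/2\rho)(\rho n/2)\log X = (cn/4)\log X$, where I used $k D \ge \rho n/2$ (valid once $D \le \rho n/2$; if instead $\rho n/2 < D \le \rho n$ then $k=1$ and $kD = D > \rho n /2$ directly). Thus $\prod_i |Q(\xi_i)|/\cont(Q) < X^{-cn/4}$, which is the last assertion of \eqref{lemma:lin:eq4}.

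The main obstacle I anticipate is the careful tracking of the $\cont$ factors and of the multiplicative-to-additive loss in the height estimate: the inequality $e^{-\deg P}\|P\| \le \prod\|P_\ell\| \le e^{\deg P}\|P\|$ (and its content analogue) introduces $e^{\pm\deg P}$ factors each time, and these must be absorbed using the standing hypothesis $e^n \le X$ together with $\deg P \le \rho n$ — getting the constant exactly right (the factor $2$ in $X^{2\rho}$, the $c/(2\rho)$ in \eqref{lemma:lin:eq3}, the $c/4$ in \eqref{lemma:lin:eq4}) is where the bookkeeping is delicate but entirely routine. A secondary point requiring a line of care is the case distinction in choosing $k$ so that $\rho n/2 \le kD \le \rho n$, which is possible precisely because $D \le \rho n$.
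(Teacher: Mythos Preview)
Your argument for part a) is correct and is essentially the same pigeonhole on the irreducible factorization that the paper uses (the paper first normalizes to $\rho=1$, but the content is identical).

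Part b), however, has a genuine gap. With your choice $k=\lfloor \rho n/D\rfloor$ you correctly get $\deg Q\le \rho n$ and $kD\ge \rho n/2$, so the \emph{product} estimate goes through. But the height bound does not: the step
\[
 H(Q)\le e^{kD}H(R)^k \le e^{\deg P}H(P)
\]
is unjustified, because $R^k$ need not divide $P$. The irreducible $R$ occurs in $P$ with some multiplicity $e_\ell$, and nothing prevents $k>e_\ell$. In the extreme case $D=1$ and $e_\ell=1$ you would take $k\approx \rho n$, and since one only knows $H(R)\le e^{\deg P}H(P)\le X^{2\rho}$, the bound $H(R^k)\le e^{kD}H(R)^k$ can be as large as roughly $X^{2\rho^2 n}$, far beyond $X^{2\rho}$. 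So this is not a bookkeeping issue; the inequality is simply false in general.

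The paper's fix is to \emph{define} $k$ as the largest integer for which both $\deg(R^k)\le \rho n$ and $H(R^k)\le X^{2\rho}$ hold (after normalizing $\rho=1$: $\deg(R^k)\le n$ and $H(R^k)\le X^2$). The height bound is then automatic, and maximality of $k$ is used in a dichotomy: either $\deg Q\ge \rho n/2$ and the $X^{\deg Q}$ term in \eqref{lemma:lin:eq3} already gives $X^{-cn/4}$; or $\deg Q<\rho n/2$, in which case $R^{2k}$ still satisfies the degree constraint, so by maximality $H(R^{2k})>X^{2\rho}$, forcing $H(R)^k\gtrsim X^{\rho/2}$, and now the $H(R)^{kn}$ term in \eqref{lemma:lin:eq3}---which you discarded---yields the bound. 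In short, you cannot throw away the $H(R)^n$ contribution in \eqref{lemma:lin:eq3}; it is exactly what rescues the argument when $H(R)$ is large and $k$ must be taken small.
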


Usually the data takes the form \eqref{lemma:lin:eq2}.  In replacing
it by the weaker condition \eqref{lemma:lin:eq1}, one gains that the
right hand side becomes essentially a multiplicative function of
$P$.  Part b) of the lemma shows that not much is lost in the
process, regardless of the value of $\rho$. However, for given $c$,
$n$ and $X$, the conclusion of Part a) gets stronger for small
values of $\rho$.

\begin{proof}
Upon replacing $n$ by $n/\rho$, $X$ by $X^{1/\rho}$ and $c$ by
$\rho^2c$, we may assume without loss of generality that $\rho=1$.
We also note that the strict inequality in \eqref{lemma:lin:eq1}
implies that $P$ is a non-constant polynomial.

a) Factor $P$ as a product $P=R_1\cdots R_u$ of irreducible
polynomials of $\QT$.  Since $H(R_1)\cdots H(R_u) \le
e^{\deg(P)}H(P)$, the condition \eqref{lemma:lin:eq1} implies
\[
 \prod_{j=1}^u \prod_{i=1}^s \frac{|R_j(\xi_i)|}{\cont(R_j)}
 <
 \big( X^{\deg(P)} (e^{\deg(P)}H(P))^n \big)^{-c/2}
 \le
 \prod_{j=1}^u
   \big( X^{\deg(R_j)} H(R_j)^n \big)^{-c/2}.
\]
Therefore there is at least one index $j$ for which the polynomial
$R=R_j$ satisfies \eqref{lemma:lin:eq3}.

b) Since $R$ divides $P$, we have $\deg(R)\le n$ and $H(R)\le e^n X
\le X^2$.  Let $k\ge 1$ be the largest integer for which the
polynomial $Q=R^k$ satisfies $\deg(Q)\le n$ and $H(Q)\le X^2$.
Taking the $k$-th power on both sides of \eqref{lemma:lin:eq3}, we
obtain
\[
 \prod_{j=1}^u \frac{|Q(\xi_i)|}{\cont(Q)}
 <
 \big( X^{\deg(Q)} H(R)^{kn} \big)^{-c/2}.
\]
If $\deg(Q)\ge n/2$, the right hand side of this inequality is
bounded above by $X^{-cn/4}$, and the conditions of
\eqref{lemma:lin:eq4} are all satisfied. Assume now that $\deg(Q)\le
n/2$.  Then we have $\deg(R^{2k})\le n$, and the choice of $k$
implies $H(R^{2k})\ge X^2$.  Since $H(R^{2k}) \le e^n H(R)^{2k}$, we
deduce that $H(R)^k \ge X^{1/2}$, and we reach the same conclusion.
\end{proof}

We conclude this section by stating the version of Gel'fond's
criterion of algebraic independence on which all our results
ultimately rely.

\begin{lemma}
 \label{gelfond:curve}
Let $\alpha$, $\beta$ and $\delta$ be positive real numbers with
$\beta\ge \alpha$, and let $\xi_1,\dots,\xi_m$ be a finite sequence
of complex numbers which generate a field of transcendence degree
one over $\bQ$. For infinitely many integers $n$, there exists no
polynomial $P\in \bZ[T_1,\dots,T_m]$ of degree at most $n^\alpha$
and height at most $\exp(n^\beta)$ satisfying
\[
 0 < |P(\xi_1,\dots,\xi_m)|
   \le \exp( -n^{\alpha+\beta+\delta} ).
\]
\end{lemma}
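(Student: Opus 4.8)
The plan is to reduce the statement to the classical one-variable version of Gel'fond's criterion and then to recall a proof of the latter. Since $\xi_1,\dots,\xi_m$ generate a field $L$ of transcendence degree one over $\bQ$, after renumbering we may assume that $\theta:=\xi_1$ is transcendental, so that $L$ is a finite extension of $\bQ(\theta)$. By the primitive element theorem, $L=\bQ(\theta,\omega)$ for some $\omega$ algebraic over $\bQ(\theta)$, and each $\xi_i$ can be written as $b(\theta)^{-1}F_i(\theta,\omega)$ with $F_i\in\bZ[X,Y]$ of bounded degree in $Y$ and a fixed non-zero $b\in\bZ[X]$, all bounds depending only on the $\xi_i$. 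Given $P\in\bZ[T_1,\dots,T_m]$ as in the statement, I would substitute these expressions into $P$, clear denominators, reduce modulo the minimal polynomial of $\omega$ over $\bQ(\theta)$, and take the norm from $\bQ(\theta,\omega)$ down to $\bQ(\theta)$; after clearing denominators once more this produces a non-zero $g\in\bZ[T]$ with $g(\theta)\neq 0$, whose degree and logarithmic height are at most a constant multiple of $n^\alpha$ and $n^\beta$ respectively, and which satisfies $|g(\theta)|\le\exp(-n^{\alpha+\beta+\delta}/2)$ for $n$ large. The only point to check is that the parasitic factors, namely $P$ evaluated at the conjugates of $\omega$, which enter through the norm, contribute only $\exp(O(n^\beta))$, and this is absorbed because $\alpha+\beta+\delta>\beta$ and $\alpha\le\beta$. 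After rescaling $n$, this reduces the lemma to the case $m=1$, with the same exponents $\alpha\le\beta$ and a smaller but still positive $\delta$.

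It remains to treat the one-variable case: there is no sequence of non-zero $P_n\in\bZ[T]$ with $\deg P_n\le n^\alpha$, $H(P_n)\le\exp(n^\beta)$ and $0<|P_n(\theta)|\le\exp(-n^{\alpha+\beta+\delta})$ for all large $n$. Assume such a sequence exists. A first change of variables $n\mapsto n^{1/\alpha}$ lets me assume $\alpha=1$, hence $\beta\ge 1$. The two basic tools are: (i) Lemma~\ref{lemma:linearization}, applied with $s=1$, $\xi_1=\theta$, $X=\exp(n^\beta)$, $N=n$, which replaces a polynomial taking a suitably small value at $\theta$ by a non-constant irreducible factor $R$ still satisfying $|R(\theta)|\le\exp(-\kappa(n^\beta\deg R+n\log H(R)))$ for a fixed $\kappa>0$; and (ii) the elementary estimate, coming from a B\'ezout identity $UR+VR'=\Res(R,R')$ together with Cramer's rule on the Sylvester matrix, that two \emph{coprime} polynomials $R,R'\in\bZ[T]$ cannot both take very small values at the transcendental number $\theta$, in the form $\max(|R(\theta)|,|R'(\theta)|)\ge\exp\big(-C(\deg R+\deg R')(\log H(R)+\log H(R')+\log(\deg R+\deg R'))\big)$ for a fixed $C$. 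Applying (ii) to $P_n$ and $P_m$ shows that these two polynomials must have a non-trivial common factor whenever $n\le m\le n^{1+\varepsilon}$, for a suitable $\varepsilon=\varepsilon(\beta,\delta)>0$: if they were coprime, the left-hand side would be at most $\exp(-n^{1+\beta+\delta})$ while the right-hand side is at least $\exp(-C'm^{1+\beta})\ge\exp(-C'n^{(1+\varepsilon)(1+\beta)})$, which is impossible once $(1+\varepsilon)(1+\beta)<1+\beta+\delta$.

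The final step is to convert this common-factor structure, together with the smallness of the values, into a contradiction with the transcendence of $\theta$. Each $P_n$ shares a non-constant irreducible factor with the roughly $n^{1+\varepsilon}$ polynomials $P_n,\dots,P_{[n^{1+\varepsilon}]}$, and since $P_n$ has at most $n$ distinct irreducible factors, a pigeonhole argument produces an irreducible $R_n\mid P_n$ dividing at least $n^\varepsilon$ of them; moreover the smallness of $|P_k(\theta)|$ forces the values $|R_n(\theta)|$ themselves to be extremely small relative to $\deg R_n$ and $\log H(R_n)$. One then has to organize these data so that a single irreducible polynomial $R$ occurs as $R_n$, hence divides $P_n$, for infinitely many $n$: once this is achieved, the bound on $|R(\theta)|$, which contains the factor $n^\beta\deg R\to\infty$ since $\deg R\ge 1$, gives $R(\theta)=0$, contradicting the transcendence of $\theta$; and if no such $R$ exists, one selects infinitely many mutually coprime $R_n$ at comparable scales and derives a contradiction from tool (ii). I expect this last combinatorial bookkeeping — ensuring that the relevant irreducible factors stabilize while keeping the degree, height and small-value estimates mutually compatible throughout the reduction and the pigeonhole — to be the main technical obstacle; everything else is routine resultant and height manipulation. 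Alternatively, the whole lemma can be deduced from the multihomogeneous forms of Gel'fond's criterion due to Philippon and to Laurent and Roy.
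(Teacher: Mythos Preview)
The paper does not actually prove this lemma: it simply cites \cite[Theorem~2.11]{Ph} or \cite[\S7, Corollary~3]{LR2}, and remarks that ``a standard norm argument reduces the proof of this result to the case $m=1$ which is a direct consequence of \cite[Theorem~1]{Br1}''. Your proposal is aligned with this and in fact goes further, since you spell out the norm reduction and sketch the one-variable case. Your final sentence invoking Philippon and Laurent--Roy is exactly the paper's primary reference.

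Your norm reduction is the ``standard'' one the paper alludes to and is correctly set up; the key observation that the conjugate factors in the norm contribute only $\exp(O(n^\beta))$, which is absorbed because $\alpha+\beta+\delta>\beta$, is right. In your one-variable sketch, however, the route through common factors of $P_n$ and $P_m$ followed by pigeonhole is more circuitous than necessary and has a genuine soft spot: once you have an irreducible $R_n$ dividing many $P_k$, the smallness of $|P_k(\theta)|$ does \emph{not} by itself force $|R_n(\theta)|$ to be small, since the cofactor $P_k/R_n$ could carry the small value. The clean argument (which is Brownawell's) applies your tool~(i) \emph{first}: linearize each $P_n$ to an irreducible $Q_n$ with $|Q_n(\theta)|\le\big(X^{\deg Q_n}H(Q_n)^n\big)^{-c}$, then apply the resultant bound~(ii) directly to the pair $Q_n,Q_{n+1}$ to conclude they are associates for all large $n$, hence eventually constant; this forces $Q(\theta)=0$, contradicting transcendence. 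You correctly flag the bookkeeping in your version as the main obstacle, and indeed reorganizing along these lines removes it.
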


This follows for example from \cite[Theorem 2.11]{Ph} or \cite[\S7,
Corollary 3]{LR2}.  Alternatively, a standard norm argument reduces
the proof of this result to the case $m=1$ which is a direct
consequence of \cite[Theorem 1]{Br1}. The fact that one can separate
the estimates for the degree and height of the polynomials is an
original observation of D.~W.~Brownawell and M.~Waldschmidt which
played a crucial role in their proof of Schneider's eighth problem
\cite{Br2,Wa1}.  Note that, in the case $m=1$, the condition
$0<|P(\xi_1)|$ can be simply replaced by $P\neq 0$ since $\xi_1$ is
assumed to be transcendental over $\bQ$.

%
%
\section{Estimates for the resultant}
\label{sec:result}

For any finite subset $E$ of $\bC$ with at least two points, we
define
\begin{equation}
 \label{def:deltaE}
 \delta_E = \min_{\xi'\neq \xi} |\xi'-\xi|
 \et
 \Delta_E = \prod_{\xi'\neq \xi} |\xi'-\xi|^{1/2}
\end{equation}
where both the minimum and the product are taken over all ordered
pairs $(\xi',\xi)$ of distinct elements of $E$.  When $E$ consists
of one point, we put $\delta_E=\Delta_E=1$.  In the sequel, we will
often use the crude estimate $\Delta_E \ge
\min(1,\delta_E)^{(1/2)|E|^2}$. The main result of this section is
the following.

\begin{proposition}
 \label{result:propFG}
Let $n,s,t\in\bN^*$ with $n\ge st$, let $E$ be a set of $s$ complex
numbers, let $F$ and $G$ be non-zero polynomials of $\QT$ of degree
at most $n$ and let $Q\in\QT$ be their greatest common divisor.  For
any pair of integers $f$ and $g$ with $\deg(F/Q)\le f \le n$ and
$\deg(G/Q)\le g \le n$, we have
\begin{equation}
 \label{propFG:eq1}
 H(Q)^{f+g} \prod_{\xi\in E} \left( \frac{|Q(\xi)|}{\|Q\|} \right)^t
 \le
 c_1 H(F)^g H(G)^f
       \prod_{\xi\in E}
       \max_{0\le j<t} \max \left\{ \frac{|F^{[j]}(\xi)|}{\|F\|},
       \frac{|G^{[j]}(\xi)|}{\|G\|} \right\}^t,
\end{equation}
with $c_1 = e^{7n^2} (2+c_E)^{4nst} \Delta_E^{-t^2}$, where $c_E =
\max_{\xi\in E} |\xi|$ and $\Delta_E$ is defined above.
\end{proposition}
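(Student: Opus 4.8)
The plan is to express the height of $Q$ in terms of a resultant-like quantity and then bound that quantity from above using the hypothesis on the small values of $F$, $G$ and their divided derivatives at the points of $E$. First I would reduce to the coprime case: writing $F = QF_1$ and $G = QG_1$ with $\gcd(F_1,G_1)=1$, the resultant $\mathrm{Res}(F_1,G_1)$ is a non-zero integer (up to denominators controlled by contents), so its absolute value is at least the reciprocal of a product of contents. On the other hand, the classical Sylvester/Hadamard-type bound expresses $|\mathrm{Res}(F_1,G_1)|$ as a product over the roots, and one can convert this into a statement of the shape ``$H(Q)^{f+g}$ times a product of normalized values of $Q$'' on one side, versus ``$H(F)^g H(G)^f$ times a product over $E$ of the normalized values'' on the other. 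The passage from roots to the set $E$ is where the divided derivatives enter: since $E$ need not contain any root, one instead uses an interpolation/Taylor-expansion estimate — knowing $F^{[j]}(\xi)$ small for $0\le j<t$ at each of the $s$ points forces $F$ to be divisible (in an approximate sense) by $\prod_{\xi\in E}(T-\xi)^t$, and the $\Delta_E^{-t^2}$ and $(2+c_E)^{4nst}$ factors are precisely the cost of this approximate division together with the change between the norm $\|\cdot\|$ at $\xi$ and the leading behaviour.

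Concretely, I would proceed as follows. (i) Use Lemma 2 of \cite[Ch.~III, \S4]{G} (stated in \S\ref{sec:prelim}) to pass freely between $\|Q\|$, $\|F\|$, $\|G\|$ and their products, absorbing the $e^{O(n)}$ losses into the $e^{7n^2}$ factor. (ii) Form the $n\times n$ (or $(f+g)\times(f+g)$) Sylvester matrix of $F_1$ and $G_1$; its determinant is $\pm\mathrm{Res}(F_1,G_1)$ and is a non-zero rational whose content divides $\mathrm{cont}(F_1)^g\mathrm{cont}(G_1)^f$, giving a lower bound $|\mathrm{Res}(F_1,G_1)| \ge \big(\mathrm{cont}(F_1)^g\mathrm{cont}(G_1)^f\big)^{-1}$ after clearing denominators — which, using $H = \|\cdot\|/\mathrm{cont}$, is what produces the $H(Q)^{f+g}$ on the left once one divides through. (iii) For the upper bound on $|\mathrm{Res}(F_1,G_1)|$, rather than Hadamard on the Sylvester matrix directly (which would only see $\|F_1\|,\|G_1\|$), I would use the product formula $|\mathrm{Res}(F_1,G_1)| = |a|^{\deg G_1}\prod_{F_1(\alpha)=0}|G_1(\alpha)|$ only schematically, and instead argue via evaluation at the points of $E$: build an auxiliary matrix whose rows are the coefficient vectors of $T^k F_1$ and $T^\ell G_1$ and whose columns are grouped so that, after suitable row operations, $t$ of the rows for each $\xi\in E$ encode the values $F_1^{[j]}(\xi)$, $G_1^{[j]}(\xi)$ for $0\le j<t$ while the remaining $n-st$ rows contribute only a $\|F_1\|,\|G_1\|$-type bound; expanding this determinant and using $n\ge st$ gives the product over $E$ of $\max_{0\le j<t}\max\{|F^{[j]}(\xi)|/\|F\|,\,|G^{[j]}(\xi)|/\|G\|\}^t$ with the stated auxiliary constants.

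The main obstacle I anticipate is step (iii): making the change of basis on the Sylvester-type matrix fully rigorous, i.e.\ exhibiting an explicit invertible transformation (essentially a confluent Vandermonde built from $E$ with multiplicities $t$, whose determinant is $\pm\Delta_E^{t^2}$ up to sign and normalization) that simultaneously extracts the divided derivatives $F^{[j]}(\xi)$ at all $\xi\in E$, and then controlling the norm of its inverse — this is exactly what forces the appearance of $\Delta_E^{-t^2}$ and the factor $(2+c_E)^{4nst}$ (from bounding entries $|\xi|^k\le c_E^k\le(2+c_E)^n$ across $O(nst)$ matrix entries). A secondary technical point is keeping the bookkeeping between the projective heights $H(\cdot)$ and the norms $\|\cdot\|$ consistent when $Q$, $F/Q$, $G/Q$ have non-trivial contents; here the multiplicativity of the content and the essentially-multiplicative property of the height recalled in \S\ref{sec:prelim} do all the work, and the slack is again swept into $e^{7n^2}$. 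Everything else — the combinatorial inequality $\deg(F_1)\le f$, $\deg(G_1)\le g$, the trivial bound $\Delta_E\ge\min(1,\delta_E)^{|E|^2/2}$, and the final rearrangement into \eqref{propFG:eq1} — is routine.
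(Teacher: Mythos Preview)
Your overall strategy is the same as the paper's: reduce to primitive $F,G,Q$, set $A=F/Q$, $B=G/Q$ coprime in $\ZT$ so that $|\Res(A,B)|\ge 1$, then bound $|\Res(A,B)|$ from above by replacing $st$ of the columns of the Sylvester matrix via a confluent Vandermonde change of basis built on $E$ with multiplicity $t$ (this is exactly the paper's Lemma~\ref{result:lemmaDet}, and its determinant is what produces the factor $\Delta_E^{-t^2}$). The conversion back from $H(A),H(B)$ to $H(F),H(G),H(Q)$ via the essentially-multiplicative height is also just as you describe.

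There is, however, one genuine gap you have not addressed. The Sylvester matrix of $A$ and $B$ has size $m\times m$ with $m=\deg A+\deg B$, \emph{not} $n\times n$ or $(f+g)\times(f+g)$; you cannot simply pad it, since extra shifts $T^kA$, $T^\ell B$ would produce a singular matrix. Your confluent-Vandermonde column replacement needs $m\ge st$, and nothing in the hypotheses prevents $m<st$ (for instance when $Q$ is large so that $A,B$ have small degree). The paper handles this in a separate lemma (Lemma~\ref{result:lemmaExt}): choose $z$ on the unit circle with $|B(z)|\ge\|B\|$ (Cauchy's inequalities), replace $F$ by $\tilde F(T)=(T-z)^{st-m}F(T)$, and observe that $\gcd(\tilde F,G)=Q$ still holds while now $\deg(\tilde F/Q)+\deg(G/Q)=st$; the multiplicativity of the resultant gives $|\Res(\tilde A,B)|=|B(z)|^{st-m}|\Res(A,B)|\ge\|B\|^{st-m}|\Res(A,B)|$, which is exactly what is needed to absorb the extra $\|B\|$-powers. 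This trick is not automatic and should be flagged; once you insert it, your argument goes through and matches the paper's proof essentially line for line.
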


When $s=1$, this is essentially Lemma 13 of \cite{LR1}.  In other
words, we can view the above proposition as an extension of the
latter result dealing with values of polynomials and their
derivatives at several points instead of one.  The proof is similar
in that it proceeds through estimations of the resultant of $F/Q$
and $G/Q$. It will require several intermediate lemmas. Before going
into this, we note the following corollary.

\begin{corollary}
 \label{result:corPP}
Let $n,s,t\in\bN^*$ with $n\ge st$, let $E$ be a set of $s$ complex
numbers, let $P_1,\dots,P_r\in\QT$ be a finite sequence of $r\ge 2$
non-zero polynomials of degree at most $n$, and let $Q\in\QT$ be
their greatest common divisor.  Then we have
\begin{equation}
 \label{corPP:eq1}
 \prod_{\xi\in E} \left( \frac{|Q(\xi)|}{\cont(Q)} \right)^t
 \le
 e^{3n^2} c_1 \Big( \max_{1\le i\le r} H(P_i) \Big)^{2n}
       \prod_{\xi\in E} \Bigg(
       \max_{\substack{1\le i\le r \\ 0\le j<t}}
       \frac{|P_i^{[j]}(\xi)|}{\cont(P_i)}
       \Bigg)^t,
\end{equation}
where $c_1$ is as in Proposition \ref{result:propFG}.
\end{corollary}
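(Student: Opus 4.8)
The plan is to reduce the general case of $r\ge 2$ polynomials to the two-polynomial case handled by Proposition \ref{result:propFG}, by a suitable averaging (or ``telescoping'') argument over pairs. The first step is to normalize: replacing each $P_i$ by $P_i/\cont(P_i)$ changes nothing in \eqref{corPP:eq1} since every quantity appearing there is invariant under scaling by rationals, so we may assume each $P_i\in\ZT$ is primitive, whence $\cont(P_i)=1$, $\|P_i\|=H(P_i)$, and $\cont(Q)$ divides a fixed integer (in fact $Q$ can be taken primitive as well). Write $d_i=\deg(P_i/Q)\le n$. Since $Q=\gcd(P_1,\dots,P_r)$, we have $\gcd(P_1/Q,\dots,P_r/Q)=1$, so these quotients have no common factor; in particular we can choose two indices — after reindexing, $i=1$ and $i=2$ — whose quotients $P_1/Q$ and $P_2/Q$ are not both divisible by any single irreducible, i.e.\ $\gcd(P_1,P_2)=Q$ (if no such pair exists we still argue via a chain of pairs, see below).

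Assuming for the moment that $\gcd(P_1,P_2)=Q$, I would apply Proposition \ref{result:propFG} with $F=P_1$, $G=P_2$, and the choice $f=g=n$ (legitimate since $\deg(F/Q)\le d_1\le n$ and $\deg(G/Q)\le d_2\le n$). This yields
\[
 H(Q)^{2n} \prod_{\xi\in E}\left(\frac{|Q(\xi)|}{\|Q\|}\right)^t
 \le
 c_1\, H(P_1)^n H(P_2)^n
 \prod_{\xi\in E}\max_{0\le j<t}\max\left\{\frac{|P_1^{[j]}(\xi)|}{\|P_1\|},\frac{|P_2^{[j]}(\xi)|}{\|P_2\|}\right\}^t.
\]
Now $\|Q\|\le H(Q)$ combined with the primitivity reductions gives $|Q(\xi)|/\cont(Q)=|Q(\xi)| \le H(Q)\cdot |Q(\xi)|/\|Q\|$ wait — more simply, since $Q$ primitive means $\cont(Q)=1$ and $\|Q\|=H(Q)$, the left side above is exactly $H(Q)^{2n}\prod_\xi(|Q(\xi)|/\cont(Q))^t$ divided by $H(Q)^{2nt}$... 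I need to be a little careful: I would instead use $\|Q\|\ge 1$ (integer coefficients, nonzero) to drop it from the denominator on the left at the cost of nothing, and on the right use $\|P_i\|=H(P_i)\ge 1$ to replace $|P_i^{[j]}(\xi)|/\|P_i\|$ by $|P_i^{[j]}(\xi)|=|P_i^{[j]}(\xi)|/\cont(P_i)$, which only increases the right side. Bounding the resulting max over $\{1,2\}$ by the max over $\{1,\dots,r\}$, and bounding $H(P_1)^nH(P_2)^n\le(\max_i H(P_i))^{2n}$, gives \eqref{corPP:eq1} with even a factor $e^{3n^2}$ to spare.

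The main obstacle is the case where no pair $P_i,P_j$ satisfies $\gcd(P_i,P_j)=Q$, i.e.\ every pair shares a common factor strictly larger than $Q$. The standard fix is: let $R$ be any irreducible factor of $P_1/Q$; since $\gcd(P_1/Q,\dots,P_r/Q)=1$, some $P_k/Q$ is not divisible by $R$. One builds inductively a product $\tilde P=P_{i_1}\cdots P_{i_\ell}$ of at most $r$ of the $P_i$ (with repetition allowed if needed) such that $\gcd(P_1,\tilde P)=Q$, then applies Proposition \ref{result:propFG} to $F=P_1$ and $G=\tilde P$ — using $\deg(\tilde P)\le rn$, which forces taking $g$ as large as $rn$ rather than $n$, worsening the constant. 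Here the cleaner route, and the one I expect the paper to take, is to note $Q=\gcd_i P_i = \gcd\big(P_1,\ \gcd(P_2,\dots,P_r)\big)$ and argue by induction on $r$, or simply to replace $G$ by a generic $\bQ$-linear combination $G=\lambda_2 P_2+\cdots+\lambda_r P_r$ chosen (by a Bézout-type argument over the PID $\bQ[T]$) so that $\gcd(P_1,G)=Q$ exactly, with $\deg G\le n$, $H(G)\le e^{O(n)}\max_i H(P_i)$, and $|G^{[j]}(\xi)|\le (r\max|\lambda_i|)\max_i|P_i^{[j]}(\xi)|$. Absorbing the factors $r$, $\max|\lambda_i|$, and $e^{O(n)}$ into the slack $e^{3n^2}$ (valid once $r\le n$, say, which one may assume since otherwise the statement is handled by reindexing or is vacuous) completes the proof. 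The extra factor $e^{3n^2}$ in \eqref{corPP:eq1} compared to \eqref{propFG:eq1} is precisely the room reserved for these bookkeeping losses.
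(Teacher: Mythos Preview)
Your final route---pass to $F=P_1$ and a single linear combination $G=\sum_i\lambda_iP_i$ with $\gcd(F,G)=Q$, then apply Proposition~\ref{result:propFG} with $f=g=n$---is exactly what the paper does; the pairwise case split and the product/induction alternatives you try first are unnecessary detours. The two points you leave vague are handled precisely in the paper: (i) one may assume $Q$ is not the gcd of any proper subset of $\{P_1,\dots,P_r\}$ (if it is, drop a redundant $P_i$; the right side of \eqref{corPP:eq1} only increases), and this forces $r\le n+1$ since the successive gcd's $\gcd(P_1,\dots,P_k)$ must then have strictly decreasing degree; (ii) the control on the coefficients is supplied by Lemma~12 of \cite{LR1}, which produces \emph{integers} $0\le a_i\le n$ with $\gcd\big(P_1,\sum_i a_iP_i\big)=Q$. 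Your ``B\'ezout-type argument over the PID $\bQ[T]$'' does not by itself bound the $\lambda_i$, and that bound is essential---with it one gets $H(F),H(G)\le n^2\max_iH(P_i)$ and the analogous estimate on derivatives, whence the total loss is $(n^2)^{2n+st}\le e^{3n^2}$ (using $n^2\le e^n$ and $st\le n$), precisely the slack in \eqref{corPP:eq1}. One minor correction to your left-side bookkeeping: the passage from $H(Q)^{2n}\prod_\xi(|Q(\xi)|/\|Q\|)^t$ down to $\prod_\xi|Q(\xi)|^t$ uses $H(Q)^{2n}/\|Q\|^{st}=\|Q\|^{2n-st}\ge 1$ for primitive $Q$ and $2n\ge st$, not simply ``$\|Q\|\ge 1$ in the denominator.''
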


\begin{proof}
Without loss of generality, we may assume that $P_1,\dots,P_r$ and
$Q$ have content $1$, or equivalently that they are primitive
polynomials of $\ZT$.  We may also assume that $Q(\xi)\neq 0$ for
each $\xi\in E$, and that $Q$ is not the gcd of any proper subset of
$\{P_1,\dots,P_r\}$. The latter condition implies that $r\le n+1$.
According to Lemma 12 of \cite{LR1} there exist integers
$a_1,\dots,a_r$ with $0\le a_i \le n$ for $i=1,\dots,r$ such that
$Q$ is the gcd of $F:=P_1$ and $G:=\sum_{i=1}^r a_i P_h$. Assuming,
as we may, that $a_1=0$, we find
\begin{equation*}
 \max\{H(F),H(G)\}
  \le \max\Big\{\|P_1\|, n \sum_{i=2}^r \|P_i\| \Big\}
  \le n^2 \max_{1\le i\le r} H(P_i)
\end{equation*}
and similarly, for any $\xi\in E$ and any $j=0,\dots,t-1$,
\begin{equation*}
 \max \big\{ |F^{[j]}(\xi)|,
       |G^{[j]}(\xi)| \big\}
  \le n^2 \max_{1\le i\le r} |P_i^{[j]}(\xi)|.
\end{equation*}
Applying Proposition \ref{result:propFG} with $f=g=n$, we then find
\begin{align*}
 \prod_{\xi\in E} |Q(\xi)|^t
 &\le H(Q)^{2n}
      \prod_{\xi\in E} \left( \frac{|Q(\xi)|}{\|Q\|} \right)^t \\
 &\le c_1 \max\{H(F),H(G)\}^{2n}
       \prod_{\xi\in E}
       \max_{0\le j<t} \max \big\{ |F^{[j]}(\xi)|,
       |G^{[j]}(\xi)| \big\}^t \\
 &\le c_1 (n^2)^{2n+st} \big( \max_{1\le i\le r} H(P_i) \big)^{2n}
       \prod_{\xi\in E} \Big(
       \max_{\substack{1\le i\le r \\ 0\le j<t}} |P_i^{[j]}(\xi)|
       \Big)^t.
\end{align*}
The conclusion follows using $n^2\le e^n$ and $st\le n$.
\end{proof}

In order to prove our main Proposition \ref{result:propFG}, we start
by establishing a simple technical lemma.

\begin{lemma}
 \label{result:lemmaTF}
Let $n,t\in\bN^*$, let $z,\xi\in\bC$, and let $F\in\CT$ be a
non-zero polynomial with $\deg(F) \le n$. Then, for each integer
$\ell \ge 0$, the polynomial $\tF(T)=(T-z)^\ell F(T)$ satisfies
\begin{equation}
 \label{lemmaTF:eq1}
 \max_{0\le j <t} \frac{|\tF^{[j]}(\xi)|}{\|\tF\|}
   \le e^{\deg(\tF)} (2+|\xi|)^\ell
       \max_{0\le j <t} \frac{|F^{[j]}(\xi)|}{\|F\|}.
\end{equation}
When $z=0$, we can omit the factor $e^{\deg(\tF)}$ in the upper
bound.
\end{lemma}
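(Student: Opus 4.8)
The plan is to prove the bound \eqref{lemmaTF:eq1} by expressing $\tF^{[j]}$ in terms of the divided derivatives of $F$ and controlling the passage from $\|F\|$ to $\|\tF\|$. First I would write $\tF = (T-z)^\ell F$ and use the Leibniz rule for divided derivatives: $\tF^{[j]}(T) = \sum_{k=0}^{j} \binom{\ell}{k} (T-z)^{\ell-k} F^{[j-k]}(T)$, where the binomial coefficient is understood to vanish for $k>\ell$. Evaluating at $\xi$ gives $|\tF^{[j]}(\xi)| \le \sum_{k=0}^{j}\binom{\ell}{k} |\xi-z|^{\ell-k} |F^{[j-k]}(\xi)|$. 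Since $j < t \le n+1$ (actually just $j\ge 0$), I can bound $\sum_k \binom{\ell}{k}|\xi-z|^{\ell-k} \le (1+|\xi-z|)^\ell \le (1+|\xi|+|z|)^\ell$, and bound each $|F^{[j-k]}(\xi)|$ by $\|F\|\cdot\max_{0\le j'<t}\frac{|F^{[j']}(\xi)|}{\|F\|}$. This produces a factor like $(1+|\xi|+|z|)^\ell$ rather than the clean $(2+|\xi|)^\ell$ claimed, so the term $|z|$ must be absorbed elsewhere — which signals that the $e^{\deg(\tF)}$ factor is exactly there to soak up the dependence on $z$, presumably after noting that $z$ is a root of $\tF$ and using a Cauchy-type estimate $|z|$ is controlled in terms of $\|\tF\|/\|F\|$ or the like. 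So I would instead route the comparison through $\|\tF\|$ directly.

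The cleaner route, which I would actually take, is: the quantity being bounded is $\max_{0\le j<t}|\tF^{[j]}(\xi)|/\|\tF\|$, so I should compare $|\tF^{[j]}(\xi)|$ to $\|F\|\cdot(\text{the max for }F)$ and then compare $\|F\|$ to $\|\tF\|$ from below. For the second comparison, the multiplicativity estimate quoted in \S\ref{sec:prelim} (Gelfond's lemma, \cite[Ch.~III, \S4, Lemma 2]{G}) gives $\|(T-z)^\ell\|\cdot\|F\| \le e^{\deg(\tF)}\|\tF\|$, i.e. $\|F\| \le e^{\deg(\tF)}\|\tF\|$ since $\|(T-z)^\ell\| \ge 1$ (its leading coefficient is $1$). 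Combining with $|\tF^{[j]}(\xi)| \le (1+|\xi-z|)^\ell \|F\| \max_{0\le j'<t}\frac{|F^{[j']}(\xi)|}{\|F\|}$ from the Leibniz step, and bounding $(1+|\xi-z|)^\ell$ — here is where I need to be careful — I want $(2+|\xi|)^\ell$. For this I would observe that either $|z|\le 1$, in which case $1+|\xi-z| \le 2+|\xi|$ directly, or $|z|>1$, in which case $z$ being a root of $\tF$ forces a bound on $|z|$ via the coefficients of $\tF$: since all roots of $\tF$ have modulus at most $1 + \|\tF\|$ (or more simply, this is where the $e^{\deg \tF}$ slack comes in) one can still absorb the excess. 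Honestly the simplest fix is to not worry about sharpness and allow myself the factor $e^{\deg(\tF)}$: write $(1+|\xi-z|)^\ell \le (2+|\xi|)^\ell(1+|z|)^\ell$ — no, that overshoots.

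The honest statement of the plan, then: handle the case $z=0$ first, where $\tF = T^\ell F$, the Leibniz sum collapses to $\tF^{[j]}(\xi) = \sum_{k=0}^{\min(j,\ell)}\binom{\ell}{k}\xi^{\ell-k}F^{[j-k]}(\xi)$, giving $|\tF^{[j]}(\xi)| \le (1+|\xi|)^\ell\|F\|\max_{j'}\frac{|F^{[j']}(\xi)|}{\|F\|} \le (2+|\xi|)^\ell\|F\|\max_{j'}(\cdots)$, and since $\|T^\ell F\| = \|F\|$ exactly (multiplying by $T^\ell$ just shifts coefficients) we get \eqref{lemmaTF:eq1} with no $e^{\deg\tF}$ factor — the claimed improvement in the $z=0$ case. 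For general $z$, I reduce to the $z=0$ case up to the exponential factor: write $F(T) = \sum_{\text{roots-free argument}}$ — actually, just apply the general-coefficient comparison $\|F\| \le e^{\deg \tF}\|\tF\|$ from Gelfond's lemma together with the bound $|\tF^{[j]}(\xi)| \le (1+|\xi-z|)^\ell \|F\| \max_{j'}\frac{|F^{[j']}(\xi)|}{\|F\|}$, and bound $(1+|\xi-z|)^\ell \le (2+|\xi|)^\ell$ by absorbing the $|z|$-dependence into $e^{\deg\tF}$ — specifically, if $|z| > 1+|\xi|$ then $z$ cannot be too large relative to $\|\tF\|$, but more to the point $|\tF^{[j]}(\xi)|$ is already tiny compared to $\|\tF\|$ when $|z|$ is huge, which is cheaply dominated. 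The main obstacle is precisely this bookkeeping around the $|z|$-term: making sure the constant $2+|\xi|$ (rather than something depending on $|z|$) survives, using only the exponential slack $e^{\deg(\tF)}$. I expect this to come down to the observation that $|\xi - z|$ large forces either $\|\tF\|$ correspondingly large (since $z$ is a root of a degree-$\deg(\tF)$ polynomial with that norm) or $F$ itself small at $\xi$, so the ratio stays under control; the rest is routine.
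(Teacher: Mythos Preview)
Your Leibniz-rule step and your treatment of the case $z=0$ are correct and match the paper exactly. The gap is in the general case: you apply Gel'fond's lemma to get $\|(T-z)^\ell\|\cdot\|F\| \le e^{\deg(\tF)}\|\tF\|$ and then throw away information by bounding $\|(T-z)^\ell\| \ge 1$ via the leading coefficient. After that, you are stuck with the factor $(1+|\xi-z|)^\ell$ and no remaining tool to control it --- the $e^{\deg(\tF)}$ budget has already been spent on the norm comparison, and in any case a fixed constant cannot absorb $(1+|\xi-z|)^\ell$ uniformly in $z$. Your closing speculation (``$z$ cannot be too large relative to $\|\tF\|$'') is in the right direction but never crystallizes.

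The missing observation is that $\|(T-z)^\ell\| \ge \max\{1,|z|\}^\ell$, since the constant term of $(T-z)^\ell$ is $(-z)^\ell$. Gel'fond's lemma then gives
\[
 \|\tF\| \ge e^{-\deg(\tF)} \max\{1,|z|\}^\ell\, \|F\|,
\]
so the ratio you need to bound is
\[
 \frac{|\tF^{[j]}(\xi)|}{\|\tF\|}
 \le e^{\deg(\tF)}
     \left(\frac{1+|\xi-z|}{\max\{1,|z|\}}\right)^{\!\ell}
     \frac{\max_{0\le j'<t}|F^{[j']}(\xi)|}{\|F\|}.
\]
Now the elementary inequality $(1+|\xi-z|)/\max\{1,|z|\} \le 2+|\xi|$ finishes the proof: when $|z|\le 1$ use $1+|\xi-z|\le 2+|\xi|$; when $|z|>1$ use $(1+|\xi|+|z|)/|z| \le 1+|\xi|+1$. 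This is exactly how the paper proceeds.
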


\begin{proof}
For any $j\ge 0$, we have
$
 \tF^{[j]}(\xi)
   = \sum_{h=0}^{\min(j,\ell)}
     \binom{\ell}{h} (\xi-z)^{\ell-h} F^{[j-h]}(\xi)
$
and so,
\[
 \max_{0\le j <t} |\tF^{[j]}(\xi)|
   \le (1+|\xi-z|)^\ell \max_{0\le j <t} |F^{[j]}(\xi)|.
\]
This leads to the required upper bound \eqref{lemmaTF:eq1} since
$\|\tF\| \ge e^{-\deg(\tF)} \max\{1,|z|\}^\ell \|F\|$.  When $z=0$,
we simply have $\|\tF\|=\|F\|$ and we may omit the factor
$e^{\deg(\tF)}$.
\end{proof}

\begin{lemma}
 \label{result:lemmaDet}
Let $m,s,t\in\bN^*$ with $m\ge st$, let $L$ be any field, let
$Q\in\LT$, and let $\xi_1,\dots,\xi_s$ be $s$ distinct elements of
$L$. Denote by $\LT_{\le m-1}$ the vector space of polynomials of
$\LT$ of degree at most $m-1$, and let $\varphi$ and $\psi$ the
$L$-linear maps from $\LT_{\le m-1}$ to $L^m$ which send a
polynomial $P\in\LT_{\le m-1}$ to the points $\varphi(P)$ and
$\psi(P)$ of $L^m$ whose $k$-th coordinates are respectively given
by
\begin{align*}
  \varphi(P)_k
   &= \begin{cases}
      (QP)^{[j]}(\xi_i)
       &\text{if $k=i+js$ with $1\le i\le s$ and $0\le j<t$,} \\
      P^{[k-1]}(0)
       &\text{if $st<k\le m$,}
      \end{cases} \\
  \psi(P)_k
   &= P^{[k-1]}(0) \quad \text{for $k=1,\dots,m$.}
\end{align*}
Then, for any choice of polynomials $P_1,\dots,P_m\in\LT_{\le m-1}$,
we have
\begin{equation}
 \label{lemmaDet:eq1}
 \det(\varphi(P_1),\dots,\varphi(P_m))
   = \pm \Delta^{t^2} \Big( \prod_{i=1}^s Q(\xi_i) \Big)^t
     \det(\psi(P_1),\dots,\psi(P_m))
\end{equation}
where $\Delta=\prod_{1\le i<j\le s}(\xi_j-\xi_i)$ if $s\ge 2$, and
$\Delta=1$ if $s=1$.
\end{lemma}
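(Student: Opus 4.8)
The statement is a determinant identity that compares two linear maps $\varphi,\psi\colon\LT_{\le m-1}\to L^m$, so the natural strategy is to factor $\varphi$ as a composition $\varphi = \lambda\circ\psi$ for a suitable linear automorphism-like map $\lambda$, and then compute $\det(\lambda)$ once and for all. The key observation is that $\psi$ is itself an isomorphism: it sends $P$ to its coefficient vector (coordinates $P^{[k-1]}(0)$, $k=1,\dots,m$), so $\det(\psi(P_1),\dots,\psi(P_m))$ is just the determinant of the coefficient matrix of $P_1,\dots,P_m$ in the monomial basis. Since the determinant on the left of \eqref{lemmaDet:eq1} is an alternating multilinear function of $(P_1,\dots,P_m)$, to prove the identity for all choices of $P_1,\dots,P_m$ it suffices to prove it for one convenient basis — and to handle the degenerate case where $\det(\psi(P_1),\dots,\psi(P_m))=0$ (i.e.\ the $P_i$ are linearly dependent) separately, where both sides vanish.

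So the first step is: reduce to the case where $P_1,\dots,P_m$ is the monomial basis $1,T,\dots,T^{m-1}$, for which $\det(\psi(P_1),\dots,\psi(P_m))=\pm 1$. Then the claim becomes the evaluation of a single determinant: the matrix $M$ whose columns are $\varphi(1),\varphi(T),\dots,\varphi(T^{m-1})$. This matrix has a block structure dictated by the definition of $\varphi(P)_k$: the last $m-st$ rows (those with $st<k\le m$) only see $P^{[k-1]}(0)$, which for the monomial basis gives an identity block (possibly permuted/triangular), contributing $\pm 1$. The top $st$ rows, indexed by pairs $(i,j)$ with $1\le i\le s$, $0\le j<t$, have entries $(QT^h)^{[j]}(\xi_i)$ for $h=0,\dots,st-1$ — but this is where one must be careful, because the last $m-st$ columns still contribute to these top rows too. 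The right move is to use column operations: subtract off the contributions so that effectively we are left with an $st\times st$ determinant in the top-left corner whose $(i,j),(h)$ entry is $(QT^h)^{[j]}(\xi_i)$, $0\le h<st$. The key step is thus to show $\det\big((QT^h)^{[j]}(\xi_i)\big)_{(i,j),h} = \pm\Delta^{t^2}\big(\prod_i Q(\xi_i)\big)^t$.

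The main obstacle — and the heart of the lemma — is this $st\times st$ determinant evaluation. I would approach it by recognizing it as a \emph{confluent (generalized) Vandermonde}, or more precisely a Taylor-expansion / osculating interpolation matrix twisted by $Q$. One clean route: expand $(QT^h)^{[j]}(\xi_i) = \sum_{\ell=0}^{j}Q^{[\ell]}(\xi_i)\,(T^h)^{[j-\ell]}(\xi_i)$ by the Leibniz rule; this exhibits the top block as a product of a block lower-triangular matrix $\big(Q^{[\ell]}(\xi_i)\big)$-type factor (which is block diagonal across the $s$ values of $i$, each $t\times t$ block being lower triangular with diagonal entries $Q(\xi_i)$, hence contributing $\prod_i Q(\xi_i)^t$) with the untwisted matrix $\big((T^h)^{[j]}(\xi_i)\big)_{(i,j),h}$. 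The latter is the classical confluent Vandermonde whose determinant is $\pm\prod_{i<i'}(\xi_{i'}-\xi_i)^{t^2} = \pm\Delta^{t^2}$ — here each pair of points $\xi_i,\xi_{i'}$ contributes with exponent $t\cdot t = t^2$ because each carries a block of $t$ derivatives. I would either cite this classical fact or derive it by the standard induction on $s$ using row/column operations and the derivative-of-a-product structure.

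Finally I would assemble: $\det(M) = \pm\,(\text{bottom block det}=\pm1)\cdot\det\big((QT^h)^{[j]}(\xi_i)\big) = \pm\,\Delta^{t^2}\prod_i Q(\xi_i)^t$, which matches \eqref{lemmaDet:eq1} for the monomial basis; multilinearity then propagates it to arbitrary $P_1,\dots,P_m$, and the degenerate dependent case is immediate since both sides are zero. One point to double-check carefully is the exact index bookkeeping in the block decomposition (the ordering $k=i+js$ interleaves the $(i,j)$ pairs in a specific way, so the "confluent Vandermonde" appears after a fixed permutation of rows, affecting only the overall sign, which is harmless given the $\pm$ in the statement); and one should confirm $m\ge st$ is exactly what guarantees the bottom block is non-empty-or-square and the monomials $T^0,\dots,T^{st-1}$ needed for the top block actually lie in $\LT_{\le m-1}$.
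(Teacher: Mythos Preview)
Your approach is correct and complete in outline, but it differs from the paper's. Both proofs begin with the same reduction: since each side of \eqref{lemmaDet:eq1} is an alternating multilinear form in $(P_1,\dots,P_m)$, it suffices to verify the identity on a single basis of $\LT_{\le m-1}$. You then take the monomial basis $1,T,\dots,T^{m-1}$, observe that the bottom $(m-st)\times m$ block of the $\varphi$-matrix is $(0\ |\ I)$ (so no column operations are actually needed --- the determinant already reduces to the top-left $st\times st$ block), and evaluate that block by the Leibniz-rule factorization $A=LV$, with $L$ lower triangular of determinant $\prod_i Q(\xi_i)^t$ and $V$ the confluent Vandermonde of determinant $\pm\Delta^{t^2}$. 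The paper instead chooses a basis adapted to the points: setting $E(T)=\prod_{i=1}^s(T-\xi_i)$, it takes $P_k=E(T)^j(T-\xi_1)\cdots(T-\xi_{i-1})$ for $k=i+js\le st$ and $P_k=T^{k-1}$ for $k>st$. With this basis the $\psi$-matrix is unit lower triangular, and the $\varphi$-matrix is block-$\bigl(\begin{smallmatrix}U&0\\ *&I\end{smallmatrix}\bigr)$ with $U$ \emph{upper} triangular, whose diagonal entries are read off directly as $Q(\xi_i)E'(\xi_i)^j(\xi_i-\xi_1)\cdots(\xi_i-\xi_{i-1})$; the product of these is $\pm\Delta^{t^2}\prod_i Q(\xi_i)^t$.

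What each approach buys: the paper's choice of basis is self-contained --- it never appeals to the confluent Vandermonde formula as a separate result, and the determinant is obtained from a single triangular matrix. Your approach is more modular and perhaps more transparent conceptually (the $Q$-dependence is isolated cleanly via Leibniz, and the remaining factor is a named classical determinant), but it does rely on the confluent Vandermonde evaluation, which you would need to either cite or prove. Both are equally valid; the paper's is marginally shorter on the page since the confluent Vandermonde step is absorbed into the basis choice.
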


\begin{proof}
It suffices to show that \eqref{lemmaDet:eq1} holds for at least one
choice of $L$-linearly independent polynomials $P_1,\dots,P_m$. Put
$E(T)=(T-\xi_1)\cdots(T-\xi_s)$ and, for each $k=1,\dots,m$, define
\[
 P_k =
  \begin{cases}
   E(T)^j(T-\xi_1)\cdots(T-\xi_{i-1})
    &\text{if $k=i+js$ with $1\le i\le s$ and $0\le j<t$,} \\
   T^{k-1}
    &\text{if $st<k\le m$.}
  \end{cases}
\]
Then, each $P_k$ is a monic polynomial of degree $k-1$ and so the
$m\times m$ matrix whose rows are $\psi(P_1),\dots,\psi(P_m)$ is
lower triangular with all its diagonal entries equal to $1$.  This
gives $\det(\psi(P_1),\dots,\psi(P_m))=1$.  We claim that the matrix
with rows $\varphi(P_1),\dots,\varphi(P_m)$ has a block
decomposition of the form $\begin{pmatrix} U &0\\ M &I
\end{pmatrix}$ where $U$ is an upper triangular $st\times st$ matrix
and $I$ denotes the identity matrix of size $(m-st)\times(m-st)$. To
prove this, we fix indices $k,k'$ with $1\le k,k'\le m$.  If
$k,k'>st$, we find $\varphi(P_k)_{k'}=0$ when $k'\neq k$ and
$\varphi(P_k)_{k'}=1$ when $k'=k$.  If $k'>st\ge k$, we also find
$\varphi(P_k)_{k'}=0$ since $P_k$ has degree $k-1$. Suppose now that
$k'<k\le st$.  We can write $k=i+js$ and $k'=i'+j's$ with $1\le
i,i'\le s$ and $0\le j,j'<t$. Since $k'<k$, either we have $j'=j$
and $i'<i$ or we have $j'<j$. In both cases, we find that
$(T-\xi_{i'})^{j'+1}$ divides $QP_k$ and so $\varphi(P_k)_{k'} = 0$.
We also note that
\[
 \varphi(P_k)_k
  = \lim_{x\to \xi_i}
    Q(x) \frac{E(x)^j}{(x-\xi_i)^j} (x-\xi_1) \dots (x-\xi_{i-1})
  = Q(\xi_i) E'(\xi_i)^j (\xi_i-\xi_1) \dots (\xi_i-\xi_{i-1}).
\]
This proves the claim and also provides the value of the diagonal
elements of the matrix $U$.  Consequently we have
$\det(\varphi(P_1),\dots,\varphi(P_m)) = \det(U)$ where
\[
 \det(U)
  = \prod_{j=0}^{t-1} \prod_{i=1}^{s}
    Q(\xi_i) E'(\xi_i)^j (\xi_i-\xi_1)\dots(\xi_i-\xi_{i-1})
  =\pm\, \Delta^{t^2} \prod_{i=1}^s Q(\xi_i)^t.
\]
Thus \eqref{lemmaDet:eq1} holds for the present choice of
$P_1,\dots,P_m$ and therefore it holds in general.
\end{proof}

\begin{lemma}
 \label{result:lemmaResAB}
Let $n,s,t\in\bN^*$, and let $E$ be a set of $s$ complex numbers.
Let $F,G\in\CT$ be non-zero polynomials of degree at most $n$, and
let $Q\in\CT$ be their greatest common divisor.  Put $A=F/Q$,
$B=G/Q$, $a=\deg(A)$, $b=\deg(B)$ and $m=a+b$.  Finally, assume that
$m\ge st$. Then we have
\begin{equation}
 \label{lemmaResAB:eq1}
 |\Res(A,B)| \prod_{\xi\in E} \left( \frac{|Q(\xi)|}{\|Q\|} \right)^t
   \le c_2 \|A\|^b \|B\|^a \prod_{\xi\in E} \max_{0\le j<t}
      \max \left\{
        \frac{|F^{[j]}(\xi)|}{\|F\|},
        \frac{|G^{[j]}(\xi)|}{\|G\|} \right\}^t
\end{equation}
with $c_2 = m! (e(2+c_E))^{nst} \Delta_E^{-t^2}$, where $c_E =
\max_{\xi\in E} |\xi|$.
\end{lemma}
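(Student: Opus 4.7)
The plan is to apply the linear-algebra identity of Lemma \ref{result:lemmaDet} to a specific choice of polynomials tied to the Sylvester matrix of $A$ and $B$, and then to bound the resulting determinant by the Leibniz formula. Enumerate the elements of $E$ as $\xi_1,\dots,\xi_s$ and, for $k=1,\dots,b$, set $P_k=T^{k-1}A$, while for $k=b+1,\dots,m$, set $P_k=T^{k-b-1}B$; all these polynomials lie in $\bC[T]_{\le m-1}$. Their coefficient vectors $\psi(P_k)$ assemble into the (transpose of the) classical Sylvester matrix of $A$ and $B$, so $\det(\psi(P_1),\dots,\psi(P_m))=\pm\,\Res(A,B)$. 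Moreover one checks from the definitions that $|\Delta|=\Delta_E$. Lemma \ref{result:lemmaDet} therefore yields
\[
 \big|\det(\varphi(P_1),\dots,\varphi(P_m))\big|
   =\Delta_E^{t^2}\,\prod_{\xi\in E}|Q(\xi)|^t\,|\Res(A,B)|.
\]

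Next I will bound the left-hand side by $m!\,\max_\sigma\prod_{k=1}^m|M_{k,\sigma(k)}|$, where $M_{k,k'}=\varphi(P_k)_{k'}$ and $\sigma$ runs over permutations of $\{1,\dots,m\}$. Fix $\sigma$, let $I_\sigma=\sigma^{-1}(\{1,\dots,st\})$, and write $b_\sigma=|I_\sigma\cap\{1,\dots,b\}|$ and $a_\sigma=st-b_\sigma$. Since $\sigma$ restricts to a bijection $I_\sigma\to\{1,\dots,st\}$ and $\{1,\dots,st\}$ is in bijection with $\{1,\dots,s\}\times\{0,\dots,t-1\}$ via $k'=i+js$, each $\xi_i$ appears in the list of evaluation points exactly $t$ times over $k\in I_\sigma$. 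For $k\in I_\sigma$ the corresponding entry of $M$ equals $(T^{\ell}F)^{[j]}(\xi_i)$ (if $k\le b$, with $\ell=k-1$) or $(T^{\ell}G)^{[j]}(\xi_i)$ (if $k>b$, with $\ell=k-b-1$), and Lemma \ref{result:lemmaTF} applied at $z=0$ bounds it by $(2+|\xi_i|)^{\ell}\|F\|\,M_F(\xi_i)$ or $(2+|\xi_i|)^{\ell}\|G\|\,M_G(\xi_i)$, where $M_F(\xi)=\max_{0\le j<t}|F^{[j]}(\xi)|/\|F\|$ and $M_G$ is defined analogously. For $k\notin I_\sigma$ the entry is a coefficient of $P_k$, bounded trivially by $\|A\|$ (if $k\le b$) or $\|B\|$ (if $k>b$). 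Using $\ell\le n-1$ for each of the $st$ evaluation factors, these row-by-row estimates combine to
\[
 \prod_{k=1}^m|M_{k,\sigma(k)}|
   \le (2+c_E)^{nst}\,\|F\|^{b_\sigma}\|G\|^{a_\sigma}\,
       \|A\|^{b-b_\sigma}\|B\|^{a-a_\sigma}\,
       \prod_{\xi\in E}\max\{M_F(\xi),M_G(\xi)\}^t.
\]

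Finally, the multiplicative norm inequality recalled in Section \ref{sec:prelim} gives $\|F\|=\|QA\|\le e^n\|Q\|\|A\|$ and $\|G\|\le e^n\|Q\|\|B\|$, so using $b_\sigma+a_\sigma=st$,
\[
 \|F\|^{b_\sigma}\|G\|^{a_\sigma}\|A\|^{b-b_\sigma}\|B\|^{a-a_\sigma}
   \le e^{nst}\|Q\|^{st}\|A\|^b\|B\|^a.
\]
Combining the three displays and dividing by $\Delta_E^{t^2}\|Q\|^{st}$ produces the asserted inequality with $c_2=m!(e(2+c_E))^{nst}\Delta_E^{-t^2}$. The main obstacle in this plan is the combinatorial bookkeeping behind the second display: one must verify that, \emph{regardless} of $\sigma$, each $\max\{M_F(\xi),M_G(\xi)\}$ appears with exponent exactly $t$ in the transversal product $\prod_k|M_{k,\sigma(k)}|$, which hinges on the injectivity of $\sigma$ on $I_\sigma$ combined with the bijection $\{1,\dots,st\}\leftrightarrow\{1,\dots,s\}\times\{0,\dots,t-1\}$.
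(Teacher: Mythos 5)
Your proof is correct and follows essentially the same route as the paper: the same choice of polynomials $T^{k-1}A$, $T^{k-1}B$ fed into Lemma \ref{result:lemmaDet}, the same use of Lemma \ref{result:lemmaTF} at $z=0$ together with $\|F\|\le e^n\|Q\|\|A\|$, $\|G\|\le e^n\|Q\|\|B\|$, and the same constant. The only (purely presentational) difference is that the paper normalizes the first $b$ rows by $\|A\|$ and the last $a$ rows by $\|B\|$ and then bounds the product of the first $st$ column maxima, whereas you run the Leibniz expansion permutation by permutation; the bookkeeping you flag at the end is exactly the same ``$t$ occurrences of each $\xi_i$'' count and is verified as you state.
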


\begin{proof}
Let $\xi_1,\dots,\xi_s$ denote the $s$ elements of $E$.   By
definition of the resultant, we have $\Res(A,B) = \det(\psi(P_1),
\dots, \psi(P_m))$ where $\psi$ is defined as in Lemma
\ref{result:lemmaDet} for the choice of $L=\bC$, and where
$P_1,\dots,P_m$ stand for the sequence of polynomials
\[
 A(T),TA(T),\dots,T^{b-1}A(T),B(T),TB(T),\dots,T^{a-1}B(T).
\]
Applying Lemma \ref{result:lemmaDet}, we deduce that
\[
 |\Res(A,B)| \prod_{i=1}^s |Q(\xi_i)|^t = \Delta_E^{-t^2} |\det M|
\]
where $M$ is the $m\times m$ matrix with rows $\varphi(P_1), \dots,
\varphi(P_m)$ for the map $\varphi \colon \CT_{\le m-1} \to \bC^m$
defined in the lemma.  Let $\tM$ be the matrix obtained from $M$ by
dividing each of its first $b$ rows by $\|A\|$ and each of its last
$a$ rows by $\|B\|$.  Then, except in its first $st$ columns, all
coefficients of $\tM$ have absolute value at most $1$.  This implies
\[
 |\det M|
   = \|A\|^b \|B\|^a |\det \tM|
   \le m! \|A\|^b \|B\|^a C_1\cdots C_{st},
\]
where, for each $k=1,\dots,st$, we denote by $C_k$ the maximum norm
of the $k$-th column of $\tM$.  Fix a choice of $k$ as above and
write it in the form $k=i+js$ with $1\le i\le s$ and $0\le j<t$.
Applying Lemma \ref{result:lemmaTF} with $z=0$ together with the
estimate $\|F\| \le e^n \|A\| \|Q\|$, we find that the absolute
values of the first $b$ elements in the $k$-th column of $\tM$ are
bounded above by:
\[
 \max_{0\le \ell < b} \frac{|(T^\ell F)^{[j]}(\xi_i)|}{\|A\|}
  \le e^n \|Q\| \max_{0\le \ell < b} \frac{|(T^\ell F)^{[j]}(\xi_i)|}{\|F\|}
  \le (e(2+c_E))^n \|Q\| \max_{0\le j< t} \frac{|F^{[j]}(\xi_i)|}{\|F\|}.
\]
Upon replacing $b$ by $a$, $A$ by $B$ and $F$ by $G$ in the above
inequalities, we also get an upper bound for the absolute values of
the last $a$ elements in the $k$-th column of $\tM$.  This gives
\[
 C_k \le (e(2+c_E))^n \|Q\| \max_{0\le j< t} \max\Big\{
          \frac{|F^{[j]}(\xi_i)|}{\|F\|},
          \frac{|G^{[j]}(\xi_i)|}{\|G\|} \Big\}.
\]
The conclusion follows.
\end{proof}

\begin{lemma}
 \label{result:lemmaExt}
Lemma \ref{result:lemmaResAB} still holds if the hypothesis $m\ge
st$ is replaced by $n\ge st$ and the constant $c_2$ in
\eqref{lemmaResAB:eq1} is replaced by $c_3 = (2n)! (e(2+c_E))^{4nst}
\Delta_E^{-t^2}$ with the same value for $c_E$.
\end{lemma}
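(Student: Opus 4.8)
The plan is to deduce Lemma \ref{result:lemmaExt} from Lemma \ref{result:lemmaResAB} by a reduction that restores the hypothesis $m\ge st$. The point is that in Lemma \ref{result:lemmaResAB} one already has $a=\deg(F/Q)$ and $b=\deg(G/Q)$, so $m=a+b$ could be as small as $0$ (if $F$ and $G$ are associate), while the conclusion must hold whenever $n\ge st$. First I would dispose of the trivial case where $Q(\xi)=0$ for some $\xi\in E$: then the left side of \eqref{lemmaResAB:eq1} vanishes and there is nothing to prove. So assume $Q(\xi)\neq 0$ for all $\xi\in E$, and in particular each $\xi\in E$ is not a common root of $F$ and $G$ beyond the multiplicity recorded in $Q$.

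The main idea is to multiply one of the two polynomials, say $B=G/Q$, by an auxiliary polynomial chosen so that the new pair has combined degree at least $st$, while keeping track of how the quantities in \eqref{lemmaResAB:eq1} change. Concretely, pick $z\in\bC$ that is not a root of $AB$ and not in $E$ (or simply take $z=0$ if $0$ is not already forcing trouble — but a generic $z$ is cleanest), set $\ell = \max\{0,\, st-m\} \le st \le n$, and replace $G$ by $\tilde G = (T-z)^\ell G$, keeping $F$ unchanged. Then $\tilde Q := \gcd(F,\tilde G) = Q$ since $(T-z)$ divides neither $F/Q$ nor, by construction, anything relevant; $\tilde A := F/Q = A$ is unchanged; $\tilde B := \tilde G/Q = (T-z)^\ell B$ has degree $b+\ell$; and the new combined degree is $m+\ell \ge st$. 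Also $\deg(\tilde G)\le n+\ell \le 2n$, which is why the factorials and exponents in $c_3$ are inflated relative to $c_2$ (the term $(2n)!$ replaces $m!$, and the exponent $nst$ becomes $4nst$ to absorb the extra factors coming from $\|\tilde B\|$, $\|\tilde G\|$ and $\tilde F^{[j]}$, $\tilde G^{[j]}$).

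The key estimates to assemble are: (i) $\Res(A,\tilde B) = \Res(A,(T-z)^\ell B) = A(z)^\ell \Res(A,B)$, so $|\Res(A,B)| = |\Res(A,\tilde B)|/|A(z)|^\ell$, and one chooses $z$ with $|A(z)|$ bounded below — e.g. $|A(z)|\ge \|A\|$ is achievable, or at worst one absorbs a harmless factor; (ii) Lemma \ref{result:lemmaTF} controls $\max_{0\le j<t}|\tilde G^{[j]}(\xi)|/\|\tilde G\|$ in terms of $\max_{0\le j<t}|G^{[j]}(\xi)|/\|G\|$ at the cost of a factor $e^{\deg(\tilde G)}(2+|\xi|)^\ell \le (e(2+c_E))^{2n}$; (iii) $\|\tilde A\|^{\deg\tilde B}\|\tilde B\|^{\deg\tilde A} = \|A\|^{b+\ell}\|B'\|^{a}$ where $\|\tilde B\|\le e^{b+\ell}\max\{1,|z|\}^\ell\|B\| \le (e(2+|z|))^{2n}\|B\|$, and since we may take $|z|\le c_E+1$ (choosing $z$ near $E$ is fine as long as $z\notin E$ and $A(z),B(z)\neq 0$), this contributes another factor of the form $(e(2+c_E))^{O(nst)}$; and finally the extra $\|A\|^\ell$ from $b+\ell$ versus $b$ is bounded by $\|F\|^\ell \le \exp$-type factors that again land inside $(e(2+c_E))^{4nst}$ after using $\ell\le st$. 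Applying Lemma \ref{result:lemmaResAB} to the pair $(F,\tilde G)$, whose combined degree is now $\ge st$, and dividing through by $|A(z)|^\ell$, all the auxiliary factors collect into the stated constant $c_3 = (2n)!(e(2+c_E))^{4nst}\Delta_E^{-t^2}$.

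The main obstacle is bookkeeping: one must choose $z$ so that simultaneously $z\notin E$, $A(z)\neq 0$, $B(z)\neq 0$, and $|A(z)|$ is not too small — this is possible because $A$ has at most $n$ roots and by the maximum principle $\sup_{|z|\le c_E+1}|A(z)|\ge \|A\|/(\text{small factor})$, so a suitable $z$ in that disc works — and then verify that each inflation factor ($\|\tilde B\|$ vs $\|B\|$, $\|\tilde G\|$ vs $\|G\|$, the $e^{\deg\tilde G}$ from Lemma \ref{result:lemmaTF}, the $(2n)!$ vs $m!$, and the $1/|A(z)|^\ell$) is dominated by the corresponding slack between $c_2$ and $c_3$. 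None of these steps is deep; the care required is purely in confirming that $(2n)!$ and the exponent $4nst$ (rather than, say, $3nst$) genuinely suffice once $\ell\le st\le n$ and $|z|\le c_E+1$ are used throughout. I would therefore organize the proof as: reduce to $Q(\xi)\neq 0$; pick $z$; define $\tilde G$; record $\Res(A,\tilde B)=A(z)^\ell\Res(A,B)$; invoke Lemma \ref{result:lemmaResAB} for $(F,\tilde G)$; translate back via Lemma \ref{result:lemmaTF} and the norm inequalities; and check the constant.
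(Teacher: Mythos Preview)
Your approach is essentially the paper's, with the roles of $F$ and $G$ swapped: the paper multiplies $F$ by $(T-z)^{st-m}$ for a point $z$ on the unit circle chosen (via Cauchy's inequalities) so that $|B(z)|\ge\|B\|$, which simultaneously fixes $|z|=1$ and gives the clean lower bound $|\Res(\tilde A,B)|\ge\|B\|^{st-m}|\Res(A,B)|$ needed to cancel the extra power of $\|B\|$ on the right. Your symmetric choice of padding $G$ and bounding $|A(z)|$ from below works the same way once you commit to $|z|=1$ rather than leaving $z$ in a disc of radius $c_E+1$.
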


\begin{proof}
Since $c_3\ge c_2$, we may assume without loss of generality that $m
< st \le n$. Cauchy's inequalities show that all coefficients of a
polynomial have their absolute value bounded above by the supremum
norm of the polynomial on the unit circle of the complex plane.
Applying this to the polynomial $B$, we deduce that there exists
$z\in\bC$ with $|z|=1$ such that $\|B\| \le |B(z)|$. Put
\[
 \tF(T) = (T-z)^{st-a-b} F(T)
 \et
 \tA(T) = \frac{\tF(T)}{Q(T)} = (T-z)^{st-a-b} A(T).
\]
As $B(z)\neq 0$, we still have $\gcd(\tF,G)=Q$.  Since $\deg(\tA B)
= st$ and since $\tF$ and $G$ both have degree at most $2n$, Lemma
\ref{result:lemmaResAB} gives
\[
 |\Res(\tA,B)| \prod_{\xi\in E} \left( \frac{|Q(\xi)|}{\|Q\|} \right)^t
 \le  c \|\tA\|^b \|B\|^{st-b}
      \prod_{\xi\in E} \max_{0\le j< t} \max \left\{
        \frac{|\tF^{[j]}(\xi)|}{\|\tF\|},
        \frac{|G^{[j]}(\xi)|}{\|G\|} \right\}^t
\]
where $c = (st)! (e(2+c_E))^{2nst} \Delta_E^{-t^2}$.  On the other
hand, using the fact that the resultant is multiplicative in each of
its arguments and that $\Res(T-z,B) = \pm B(z)$, we find
\[
 |\Res(\tA,B)|
   = |B(z)|^{st-a-b} |\Res(A,B)|
   \ge \|B\|^{st-a-b} |\Res(A,B)|.
\]
The conclusion follows by combining the above two inequalities
together with $\|\tA\| \le 2^{st} \|A\|$ and the estimate
\[
 \max_{0\le j< t} \frac{|\tF^{[j]}(\xi)|}{\|\tF\|}
   \le (e(2+c_E))^{2n-b} \max_{0\le j< t} \frac{|F^{[j]}(\xi)|}{\|F\|},
\]
valid for each $\xi\in E$, which follows from Lemma
\ref{result:lemmaTF} using $\deg(\tF) \le n+st-b\le 2n-b$.
\end{proof}

\begin{proof}[Proof of Proposition \ref{result:propFG}]
Since both sides of the inequality \eqref{propFG:eq1} stay invariant
under multiplication of $F$, $G$ and $Q$ by non-zero rational
numbers, we may assume without loss of generality that $F$, $G$ and
$Q$ are primitive polynomials of $\ZT$.  Put $A=F/Q$ and $B=G/Q$.
Then, $A$ and $B$ are relatively prime primitive polynomials of
$\ZT$.  In particular, their resultant $\Res(A,B)$ is a non-zero
integer, and so we have $|\Res(A,B)| \ge 1$. Since $\|A\|=H(A)$ and
$\|B\|=H(B)$ are also positive integers, we deduce from Lemma
\ref{result:lemmaExt} that
\[
 \prod_{\xi\in E} \left( \frac{|Q(\xi)|}{\|Q\|} \right)^t
   \le c_3 H(A)^f H(B)^g \prod_{\xi\in E} \max_{0\le j<t}
      \max \left\{
        \frac{|F^{[j]}(\xi)|}{\|F\|},
        \frac{|G^{[j]}(\xi)|}{\|G\|} \right\}^t
\]
for any pair of integers $f$ and $g$ satisfying the conditions of
the proposition.  We also note that $c_3 \le e^{5n^2} (2+c_E)^{4nst}
\Delta_E^{-t^2}$ since $(2n)! \le e^{n^2}$ and $st\le n$. The
conclusion then follows the fact that $H(A) \le e^n H(F)/H(Q)$ and
$H(B) \le e^n H(G)/H(Q)$ since $F$ and $G$ both have degree at most
$n$.
\end{proof}

%
%

\section{Estimates for translates of polynomials}
\label{sec:trans}

For each $a\in\Qplus$ and each $b\in\bQ$, we denote by
$\lambda_{a,b}$ the automorphism of $\QT$ which maps a polynomial
$P\in\QT$ to
\begin{equation*}
 (\lambda_{a,b}P)(T)=P(aT+b).
\end{equation*}
This provides an injective map from $\Qplus\times\bQ$ to the group
of automorphisms of $\QT$, whose image is a subgroup $\cL$ of that
group. We define the \emph{height} of an element $\lambda_{a,b}$ of
$\cL$ by
\[
 H(\lambda_{a,b}) = H(1,a,b).
\]
Since $\lambda_{a,b}^{-1}=\lambda_{1/a,-b/a}$ and $H(1,1/a,-b/a) =
H(a,1,-b) = H(1,a,b)$, we have $H(\lambda^{-1}) = H(\lambda)$ for
any $\lambda\in\cL$. A similar computation shows that
$H(\lambda\lambda') \le 2 H(\lambda) H(\lambda')$ for any $\lambda,
\lambda'\in\cL$. Finally, we let the group $\cL$ act on $\bC$ by
\[
 \lambda_{a,b}\cdot\xi = a\xi + b,
\]
so that $(\lambda P)(\xi) = P(\lambda\cdot\xi)$ for any
$\lambda\in\cL$ and any $\xi\in\bC$.

\begin{lemma}
 \label{trans:lemmaH1}
Let $\lambda\in\cL$, let $P$ be a non-zero polynomial of $\QT$, and
let $n$ be an upper bound for the degree of $P$.  Then, we have
$\deg(\lambda P) = \deg(P) \le n$,
\[
 (3H(\lambda))^{-n}
 \le \frac{H(\lambda P)}{H(P)} \le (3H(\lambda))^{n}
 \et
 H(\lambda)^{-n}
 \le \frac{\cont(\lambda P)}{\cont(P)} \le H(\lambda)^{n}.
\]
\end{lemma}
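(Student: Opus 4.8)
The statement concerns how the height and content of a polynomial change under the substitution $T \mapsto aT+b$. Since $\lambda_{a,b}$ is an automorphism of $\QT$ and $\lambda^{-1} = \lambda_{1/a,-b/a}$ has the same height as $\lambda$, it suffices to prove just the upper bounds $H(\lambda P) \le (3H(\lambda))^n H(P)$ and $\cont(\lambda P) \le H(\lambda)^n \cont(P)$; the lower bounds then follow by applying these upper bounds to $\lambda^{-1}$ and $\lambda P$ in place of $\lambda$ and $P$, using $\deg(\lambda P) = \deg(P) \le n$ (which is immediate since $a \neq 0$). So the real work is two one-sided estimates, and by the relation $H(P) = \|P\|/\cont(P)$ it is natural to control $\cont(\lambda P)$ and $\|\lambda P\|$ separately.

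For the content bound, the plan is to reduce to the case where $P$ is a primitive polynomial of $\ZT$, so $\cont(P)=1$. Write $\lambda = \lambda_{a,b}$ with $a = p/q$, $b = u/v$ in lowest terms, so $H(\lambda) = H(1,a,b) = \|(vp, vq, uq)\|/\gcd(vp,vq,uq)$ or some such primitive representative; the cleanest route is to clear denominators: if $d$ is a common denominator of $a$ and $b$, then $d^n \lambda P(T) = d^n P(aT+b)$ has integer coefficients once $d$ is chosen so that $da, db \in \bZ$, and in fact one sees $\cont(\lambda P)$ divides a controlled power of the relevant denominator. The key observation is that each coefficient of $P(aT+b) = \sum_k c_k (aT+b)^k$ is an integer combination of the $c_k$ with coefficients that are products of at most $n$ factors each equal to $a$ or $b$ (via the binomial expansion), so after multiplying by the appropriate denominator to the $n$-th power we land in $\bZ$, and the content picks up at most $H(\lambda)^n$. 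I would make this precise by noting $\lambda P(T) = \sum_{k} \big(\sum_{\ell \ge k} \binom{\ell}{k} c_\ell b^{\ell-k} a^k\big) T^k$ and estimating the denominator of the inner sum.

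For the norm bound, the triangle inequality gives directly
\[
 \|\lambda P\| \le \sum_{\ell=0}^n |c_\ell|\, \|(aT+b)^\ell\| \le \|P\| \sum_{\ell=0}^n (|a|+|b|)^\ell \le (n+1)\|P\|\,\max(1,|a|+|b|)^n,
\]
and since $H(\lambda) = H(1,a,b) \ge \max(1,|a|,|b|)/\text{(something} \le 1\text{)} \ge \tfrac{1}{2}\max(1,|a|+|b|)$ after accounting for the content normalization, one absorbs the $(n+1)$ factor and the $2$ into the constant $3$, giving $\|\lambda P\| \le (3H(\lambda))^n \|P\|$. Combining the content and norm estimates via $H = \|\cdot\|/\cont$ and the already-proven content lower bound yields the height bound.

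**Main obstacle.** The delicate point is bookkeeping the denominators cleanly enough to get exactly $H(\lambda)^n$ (not $(cH(\lambda))^n$) in the content estimate and exactly $(3H(\lambda))^n$ in the norm estimate — in particular, correctly relating the projective height $H(1,a,b)$ of the triple to $\max(1,|a|+|b|)$ and to the common denominator of $a$ and $b$. One must be careful that $\cont(1,a,b)$ can be small, which is precisely what makes $H(\lambda)$ large and provides the room; the inequality $H(\lambda\lambda') \le 2H(\lambda)H(\lambda')$ already proven in the text suggests the authors have set up the normalizations so that these constants come out as stated, so I expect the factor $3$ is exactly what is needed to absorb the $(n+1)$ together with a factor $2$ from the content of the triple. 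Everything else is routine binomial estimation.
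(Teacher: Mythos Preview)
Your overall strategy matches the paper's: reduce to $P$ primitive in $\ZT$, clear denominators to control the content, bound the norm via the binomial expansion, and use the symmetry $H(\lambda^{-1})=H(\lambda)$ for the reverse inequalities. However, the way you propose to combine the pieces has a quantitative gap.

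You want to prove $\|\lambda P\| \le (3H(\lambda))^n\|P\|$ and separately $\cont(\lambda P) \ge H(\lambda)^{-n}\cont(P)$, and then divide. But that division yields
\[
 H(\lambda P)=\frac{\|\lambda P\|}{\cont(\lambda P)}
 \le \frac{(3H(\lambda))^n\|P\|}{H(\lambda)^{-n}\cont(P)}
 = 3^n H(\lambda)^{2n} H(P),
\]
with an unwanted extra factor $H(\lambda)^n$. The point is that the two bounds are not independent: the denominator you clear for the content is precisely what you must \emph{not} absorb into $H(\lambda)$ before combining with the norm.

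The paper keeps the least common denominator $q$ of $a$ and $b$ explicit: one has $H(\lambda)=|q|\max(1,|a|,|b|)$, and for primitive $P$ the integrality of $q^n P(aT+b)$ gives $\cont(\lambda P)\ge |q|^{-n}$ (this is what your denominator-clearing argument actually proves --- a lower bound, not the upper bound you stated). On the norm side one has $\|P(aT+b)\|\le (1+|a|+|b|)^n\|P\|$, using $\sum_{k=0}^n(|a|+|b|)^k\le (1+|a|+|b|)^n$ rather than your $(n+1)\max(1,|a|+|b|)^n$ (your version fails to fit under $3^n$ for $n\le 3$). Then
\[
 H(\lambda P)\le |q|^n(1+|a|+|b|)^n H(P)
 = (|q|+|qa|+|qb|)^n H(P)
 \le (3|q|\max(1,|a|,|b|))^n H(P)
 = (3H(\lambda))^n H(P).
\]
So the fix is simply to postpone the passage from $|q|$ to $H(\lambda)$ until after you have multiplied $|q|$ into $1+|a|+|b|$.
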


\begin{proof}
Without loss of generality, we may assume that $P$ is a primitive
polynomial of $\ZT$ of degree $n$. It is clear that $\deg \lambda P
= n$. Choose $a\in\Qplus$ and $b\in \bQ$ such that
$\lambda=\lambda_{a,b}$, and let $q$ be the least common denominator
of $a$ and $b$, so that $H(\lambda)=|q|\max\{1,|a|,|b|\}$.  Since
$q^n\lambda(P) = q^n P(aT+b)$ has integer coefficients, we find
\[
 H(\lambda P)
 \le |q|^n \|P(aT+b)\|
 \le |q|^n(1+|a|+|b|)^n \|P\|
 \le (3H(\lambda))^n H(P),
\]
and $\cont(\lambda P) \ge |q|^{-n} \ge H(\lambda)^{-n} =
H(\lambda)^{-n}\cont(P)$.  The remaining inequalities follow from
the above with $\lambda$ replaced by $\lambda^{-1}$ and $P$ replaced
by $\lambda P$, using $H(\lambda^{-1})=H(\lambda)$.
\end{proof}

\begin{lemma}
 \label{trans:lemmaH2}
Let $a\in\Qplus$, $b\in\bQ$, and let $R$ be an irreducible
polynomial of $\QT$.  Then, $\lambda_{a,b} R$ is also an irreducible
polynomial of $\QT$. Moreover, it is an associate of $R$ if and only
if either we have $a\neq 1$ and $R$ is a rational multiple of
$(a-1)T+b$, or we have $(a,b)=(1,0)$.
\end{lemma}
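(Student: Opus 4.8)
The statement has two parts: (i) $\lambda_{a,b}R$ is irreducible whenever $R$ is, and (ii) a characterization of when $\lambda_{a,b}R$ is an associate of $R$. Part (i) is immediate: $\lambda_{a,b}$ is a ring automorphism of $\QT$ (as noted just before Lemma \ref{trans:lemmaH1}), and ring automorphisms preserve irreducibility, so there is essentially nothing to prove beyond invoking that $\lambda_{a,b}$ is an automorphism. Part (ii) is where the real content lies, and I would approach it by analyzing the substitution $T\mapsto aT+b$ geometrically on the roots of $R$ in an algebraic closure $\overline{\bQ}$.

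First I would dispose of the easy direction. If $(a,b)=(1,0)$ then $\lambda_{a,b}$ is the identity and $\lambda_{a,b}R=R$, so $R$ is trivially its own associate. If $a\neq 1$ and $R$ is a rational multiple of $(a-1)T+b$, then $R$ has the single root $\rho=-b/(a-1)$, which is the unique fixed point of the affine map $\xi\mapsto a\xi+b$; hence $\lambda_{a,b}R$ also has $\rho$ as its only root with the same multiplicity $1$, so $\lambda_{a,b}R$ and $R$ are associates (both are degree-one, and equal up to the leading coefficient). For the converse, suppose $\lambda_{a,b}R$ is an associate of $R$, i.e.\ they have the same roots in $\overline{\bQ}$ with the same multiplicities. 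Writing $\ell(\xi)=a\xi+b$, the root set of $\lambda_{a,b}R$ is $\ell^{-1}(\text{roots of }R)$, so $\ell^{-1}$ permutes the (finite, nonempty) set $Z$ of roots of $R$; equivalently $\ell$ permutes $Z$. Since $R$ is irreducible over $\bQ$, its roots are all simple and form a single Galois orbit, so multiplicities take care of themselves and the only constraint is that $\ell$ stabilizes $Z$ setwise.

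Now I would exploit that $\ell$ is an affine bijection of finite order on the finite set $Z$: some power $\ell^N$ fixes $Z$ pointwise. If $a\neq 1$, then $\ell^N(\xi)=a^N\xi+b(a^{N-1}+\cdots+1)$ is again an affine map with linear part $a^N$; for it to fix more than one point we would need $a^N=1$, and since $a\in\Qplus$ this forces $a^N=1$ with $a>0$ real, hence $a=1$, a contradiction — unless $Z$ consists of a single point. So either $Z=\{\rho\}$ is a singleton, or $a=1$. In the singleton case, $\ell(\rho)=\rho$ gives $\rho=-b/(a-1)$ (using $a\neq1$, which holds since otherwise we are in the other case), so $R$ is a rational multiple of $T-\rho$, i.e.\ of $(a-1)T+b$, as claimed. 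It remains to handle $a=1$: then $\ell(\xi)=\xi+b$ is a translation, and if it stabilizes the finite set $Z$ it must be the identity on the "center of mass" argument — concretely, $\ell^N$ is translation by $Nb$, and for $\ell^N$ to fix a point of $Z$ we need $Nb=0$, hence $b=0$, giving $(a,b)=(1,0)$. The main obstacle, such as it is, is organizing the case split cleanly and making sure the "finite order on a finite set" argument is deployed correctly; the arithmetic input that a positive rational root of unity equals $1$ is what rules out nontrivial rotations, and is the crux of why the characterization is so restrictive.
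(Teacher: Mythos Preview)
Your proof is correct and follows essentially the same approach as the paper: both argue that since $\lambda_{a,b}$ permutes the finite root set of $R$, some iterate $\lambda_{a,b}^k$ fixes every root, and then split into the cases $a=1$ (forcing $b=0$ since a nontrivial translation has no fixed points) and $a\neq 1$ (forcing $\deg R=1$ since $a^k\neq 1$ for $a\in\Qplus\setminus\{1\}$ means the iterate has a unique fixed point). The only cosmetic difference is that the paper phrases the fixed-point step via the polynomial $\lambda_{a,b}^k(T)-T$ rather than the affine map $\ell$, and dispatches the converse with ``clear'' where you verify it explicitly.
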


\begin{proof}
The first assertion follows from the fact that $\lambda_{a,b}$ is an
automorphism of $\QT$ and that any automorphism of an integral
domain maps units to units and irreducible elements to irreducible
elements.

Suppose that $\lambda_{a,b} R$ is an associate of $R$.  Then
$\lambda_{a,b}$ permutes the roots of $R$ in $\bC$, and so there is
an integer $k\ge 1$ for which $\lambda_{a,b}^k$ fixes all roots of
$R$.  This means that the roots of $R$ are also roots of the
polynomial $\lambda_{a,b}^k(T)-T$.  If $a=1$, this polynomial is the
constant $kb$ and so we must have $b=0$.  If $a \neq 1$, it is a
non-zero polynomial of $\QT$ of degree $1$ with leading coefficient
$a^k-1 \neq 0$ (since $a>0$). In this case, $R$ must also be a
polynomial of degree $1$ and $\lambda_{a,b}$ fixes its root.  This
root must therefore be $b/(1-a)$ and so $R$ is a rational multiple
of $(a-1)T+b$.  The converse is clear.
\end{proof}

\begin{lemma}
 \label{trans:lemmaH3}
Let $n,s,t\in\bN^*$ with $st\le n$, let $\cA$ be a finite subset of
$\cL$ of cardinality at least $s$, let $P$ be a non-zero polynomial
of $\QT$ of degree at most $n$, and let $R$ be an irreducible
polynomial of $\QT$. Suppose that $R$ divides $\lambda(P^{[i]})$ for
each $\lambda\in\cA$ and each $i=0,1,\dots,t-1$. Then, we have
\begin{equation}
 \label{lemmaH3:eq1}
 \deg(R) \le n/(st)
 \et
 H(R) \le \big(3\max_{\lambda\in\cA} H(\lambda)\big)^{2n/(st)}
          H(P)^{1/(st)}.
\end{equation}
\end{lemma}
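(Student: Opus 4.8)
The plan is to deduce from the hypotheses that $(\lambda^{-1}R)^t$ divides $P$ for every $\lambda\in\cA$, and then to compare degrees and heights, treating the case $\deg(R)\ge 2$ (where the translates $\lambda^{-1}R$ are essentially distinct) separately from the degenerate case $\deg(R)=1$. Throughout, write $M=\max_{\lambda\in\cA}H(\lambda)\ge 1$.

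\textbf{Reduction.} Fix $\lambda\in\cA$. Since $\lambda$ is a ring automorphism of $\QT$, the divisibilities $R\mid\lambda(P^{[i]})$ $(0\le i<t)$ give $\lambda^{-1}(R)\mid P^{[i]}$ $(0\le i<t)$, and $\lambda^{-1}(R)$ is irreducible (Lemma \ref{trans:lemmaH2}, or simply because an automorphism maps irreducibles to irreducibles). An irreducible polynomial in characteristic zero which divides a non-zero polynomial $Q$ together with each of its first $t$ divided derivatives $Q=Q^{[0]},Q^{[1]},\dots,Q^{[t-1]}$ divides $Q$ with multiplicity at least $t$ (induction on $t$, using that an irreducible non-constant $S$ does not divide $S'$). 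Hence $(\lambda^{-1}R)^t\mid P$ for every $\lambda\in\cA$. Note also that if $R$ is a non-zero rational multiple of $T-\alpha$ and $\lambda=\lambda_{a,b}$, then $\lambda^{-1}(R)$ is a non-zero rational multiple of $T-(a\alpha+b)=T-\lambda\cdot\alpha$, so that $(T-\lambda\cdot\alpha)^t\mid P$.

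\textbf{The case $\deg(R)\ge 2$.} Pick $s$ distinct elements $\lambda_1,\dots,\lambda_s$ of $\cA$. If two of the irreducible polynomials $\lambda_1^{-1}R,\dots,\lambda_s^{-1}R$ were associates, say $\lambda_j^{-1}R$ and $\lambda_k^{-1}R$ with $j\ne k$, then $\lambda_k\lambda_j^{-1}R$ would be an associate of $R$ with $\lambda_k\lambda_j^{-1}\ne\mathrm{id}$, and Lemma \ref{trans:lemmaH2} would force $R$ to be a rational multiple of a linear polynomial, contradicting $\deg(R)\ge 2$. So these $s$ polynomials are pairwise non-associate, whence $\prod_{k=1}^s(\lambda_k^{-1}R)^t$ divides $P$. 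Comparing degrees gives $st\deg(R)\le\deg(P)\le n$, so $\deg(R)\le n/(st)$ and $st\deg(R)\le n$. Comparing heights (essential multiplicativity, $\deg(P)\le n$, and the cofactor having height $\ge 1$) gives $\prod_{k=1}^s H(\lambda_k^{-1}R)^t\le e^{n}H(P)$; on the other hand Lemma \ref{trans:lemmaH1} applied to $\lambda_k^{-1}R$ gives $H(R)=H\big(\lambda_k(\lambda_k^{-1}R)\big)\le(3H(\lambda_k))^{\deg(R)}H(\lambda_k^{-1}R)\le(3M)^{\deg(R)}H(\lambda_k^{-1}R)$, i.e. $H(\lambda_k^{-1}R)\ge H(R)(3M)^{-\deg(R)}$. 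Combining these, $H(R)^{st}(3M)^{-st\deg(R)}\le e^{n}H(P)$, and since $st\deg(R)\le n$ this gives $H(R)^{st}\le e^{n}(3M)^{n}H(P)$. As $3M\ge 3>e$ and $n/(st)\ge 1$, we get $H(R)\le e^{n/(st)}(3M)^{n/(st)}H(P)^{1/(st)}\le(3M)^{2n/(st)}H(P)^{1/(st)}$.

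\textbf{The case $\deg(R)=1$.} Here $\deg(R)=1\le n/(st)$ is automatic. Write $R$ as a rational multiple of $T-\alpha$, so $H(R)=H(\alpha)$. Let $\beta_1,\dots,\beta_m$ be the distinct values among $\{\lambda\cdot\alpha:\lambda\in\cA\}$, and pick $\mu_l\in\cA$ with $\mu_l\cdot\alpha=\beta_l$. By the reduction step $(T-\beta_l)^t\mid P$ for each $l$, so $\prod_{l=1}^m(T-\beta_l)^t$ divides $P$; hence $mt\le n$ and $\prod_{l=1}^m H(\beta_l)^t\le e^{n}H(P)$. Moreover $R$ is a rational multiple of $\mu_l(T-\beta_l)$, so Lemma \ref{trans:lemmaH1} gives $H(\alpha)=H(\mu_l(T-\beta_l))\le 3H(\mu_l)H(\beta_l)\le 3M\,H(\beta_l)$. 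If $m\ge s$, then $(H(\alpha)/(3M))^{mt}\le e^{n}H(P)$ yields $H(\alpha)\le 3M\,(e^{n}H(P))^{1/(mt)}\le 3M\,e^{n/(st)}H(P)^{1/(st)}\le(3M)^{2n/(st)}H(P)^{1/(st)}$, exactly as above. If $m<s$, then since $|\cA|\ge s>m$ there are two distinct elements $\mu^{(1)}=\lambda_{a_1,b_1}$ and $\mu^{(2)}=\lambda_{a_2,b_2}$ of $\cA$ with $\mu^{(1)}\cdot\alpha=\mu^{(2)}\cdot\alpha$; necessarily $a_1\ne a_2$ (for if $a_1=a_2$ then also $b_1=b_2$), so $\nu:=(\mu^{(2)})^{-1}\mu^{(1)}$ has first coefficient $a_1/a_2\ne 1$ and satisfies $\nu\cdot\alpha=\alpha$. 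Thus $\alpha$ is the unique root of the degree one polynomial $\nu(T)-T$, and a direct estimate with denominators gives $H(\alpha)\le 3H(\nu)\le 3\cdot 2H(\mu^{(1)})H(\mu^{(2)})\le 6M^2\le(3M)^2\le(3M)^{2n/(st)}$; since $H(P)^{1/(st)}\ge 1$, the claimed bound follows.

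\textbf{Main difficulty.} All of this is routine once one isolates the degenerate situation $\deg(R)=1$ with $m<s$, i.e. when the map $\lambda\mapsto\lambda\cdot\alpha$ fails to be injective on $\cA$: there the product-of-translates argument collapses, and one must instead observe that $\alpha$ is pinned down as the unique fixed point of a low-height element $\nu\in\cL$, and invoke the elementary bound that such a fixed point has height at most $3H(\nu)$; the inequality $2n/(st)\ge 2$ then leaves exactly enough room. The only other delicate points are the divided-derivative multiplicity fact and the bookkeeping with the essential multiplicativity of the height.
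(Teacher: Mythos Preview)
Your proof is correct and follows essentially the same approach as the paper's: both rely on the divisibility $(\lambda^{-1}R)^t\mid P$, use the product-of-translates argument when enough of the $\lambda^{-1}R$ are pairwise non-associate, and fall back on the fixed-point description of $R$ from Lemma~\ref{trans:lemmaH2} when a collision occurs. The only cosmetic difference is the case split: the paper divides according to whether \emph{all} $\lambda^{-1}R$ are pairwise non-associate (and observes that a collision forces $\deg R=1$), whereas you split first on $\deg R\ge 2$ versus $\deg R=1$ and, in the linear case, further on whether the orbit $\{\lambda\cdot\alpha:\lambda\in\cA\}$ has at least $s$ elements. Your subdivision is slightly more granular but covers exactly the same ground; in particular your subcase $\deg R=1$, $m\ge s$ is subsumed in the paper's first case, and your subcase $m<s$ matches the paper's second case (where the paper records the marginally sharper $H(R)\le 2H(\nu)\le 4M^2$ in place of your $6M^2$, but either suffices).
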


\begin{proof}
By Lemma \ref{trans:lemmaH2}, the polynomials $\lambda^{-1} R$ with
$\lambda\in\cA$ are all irreducible.  Suppose first that no two of
them are associates.  Let $\lambda\in\cA$.  Since $R$ divides
$\lambda(P^{[i]})$ for $i=0,1,\dots,t-1$, we deduce that
$\lambda^{-1}R$ divides $P^{[i]}$ for the same values of $i$, and
therefore that $(\lambda^{-1}R)^t$ divides $P$.  This being true for
each $\lambda\in\cA$, we conclude that $\prod_{\lambda\in\cA}
(\lambda^{-1} R)^t$ divides $P$.  Since, by Lemma
\ref{trans:lemmaH1}, the polynomials $\lambda^{-1} R$ have the same
degree as $R$ and height at least $(3H(\lambda))^{-\deg(R)} H(R)$,
we first deduce that $\deg(R) \le n/(st)$ and then that
\[
 H(P) \ge e^{-n} \prod_{\lambda\in\cA} H(\lambda^{-1} R)^t
      \ge \big(3e\max_{\lambda\in\cA} H(\lambda)\big)^{-n} H(R)^{st},
\]
which is stronger than \eqref{lemmaH3:eq1}.

Suppose now that there exist two distinct elements $\lambda'$ and
$\lambda''$ of $\cA$ for which $\lambda' R$ and $\lambda'' R$ are
associates.  Then, $R$ is an associate of $\lambda R$ for the
composite $\lambda = (\lambda')^{-1}\lambda''$.  Since $\lambda$ is
not the identity, Lemma \ref{trans:lemmaH2} shows that $R$ has
degree $1$, and using the explicit description of $R$ given by this
lemma we find $H(R) \le 2H(\lambda) \le 4H(\lambda')H(\lambda'')$.
Then, the inequalities \eqref{lemmaH3:eq1} are again satisfied
because of the hypothesis $n\ge st$.
\end{proof}

%
%

\section{Basic small value estimates}
\label{sec:basic}

In the preceding section, we introduced a group $\cL$ of
automorphisms of $\QT$ and an action of it on $\bC$.  With this
notation, the following proposition constitutes the first step in
all our small value estimates.

\begin{proposition}
 \label{basic:propQ}
Let $n,t\in\bN^*$, let $\cA$ be a non-empty finite subset of $\cL$,
and let $E$ be a non-empty finite subset of $\bC$.  Suppose that
$|E|t \le n$. Moreover, let $P$ be a non-zero polynomial of $\ZT$ of
degree at most $n$, and let $Q$ denote the greatest common divisor
in $\QT$ of the polynomials $\lambda(P^{[i]})$ with $\lambda\in\cA$
and $0\le i <t$. Then, upon putting
\[
 \delta_P = \max\{|P^{[j]}(\lambda\cdot\xi)| \,;\,
          \lambda\in\cA,\, \xi\in E,\, 0\le j< 2t \},
\]
we have
\[
 \prod_{\xi\in E} \left( \frac{|Q(\xi)|}{\cont(Q)} \right)^t
 \le
 \big( e^4 c_\cA (2+c_E) \big)^{4n^2}
   \Delta_E^{-t^2} H(P)^{2n}  \delta_P^{|E|t},
\]
where $c_\cA = \max_{\lambda\in\cA} H(\lambda)$ and $c_E =
\max_{\xi\in E} |\xi|$.
\end{proposition}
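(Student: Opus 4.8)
The plan is to deduce the proposition directly from Corollary \ref{result:corPP}, applied to the set $E$ and to the family of polynomials $P_{\lambda,i}:=\lambda(P^{[i]})$ with $\lambda\in\cA$ and $0\le i<t$, those with $i>\deg(P)$ (hence $P^{[i]}=0$) being discarded. With this choice the integer $s$ of the corollary is $|E|$, so its hypothesis $n\ge st$ is exactly our assumption $|E|t\le n$, and the greatest common divisor of the $P_{\lambda,i}$ is the polynomial $Q$ of the statement. Should fewer than two of the $P_{\lambda,i}$ be non-zero --- which forces $|\cA|=1$ and $\deg(P)=0$, so that $Q$ is a non-zero constant and $|Q(\xi)|/\cont(Q)=1$ --- the asserted inequality is immediate; so I may assume there are at least two non-zero polynomials in the family and invoke the corollary.

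The computational heart of the proof is the behaviour of divided derivatives under $\lambda_{a,b}$: the chain rule, combined with $(P^{[i]})^{[j]}=\binom{i+j}{j}P^{[i+j]}$, gives
\[
 \big(\lambda_{a,b}(P^{[i]})\big)^{[j]}(\xi)=a^{j}\binom{i+j}{j}\,P^{[i+j]}(\lambda_{a,b}\cdot\xi)
\]
for all $i,j\ge 0$ and all $\xi\in\bC$. For $0\le i,j<t$ the order $i+j$ stays below $2t$, so $|P^{[i+j]}(\lambda\cdot\xi)|\le\delta_P$ for every $\lambda\in\cA$ and $\xi\in E$; moreover $\binom{i+j}{j}\le 4^{t}$, and $|a|\le H(\lambda)\le c_\cA$ since $H(\lambda_{a,b})$ dominates $\max\{1,|a|,|b|\}$, whence $|a|^{j}\le c_\cA^{t}$. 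For the denominators, Lemma \ref{trans:lemmaH1} gives $\cont(\lambda(P^{[i]}))\ge H(\lambda)^{-n}\cont(P^{[i]})\ge c_\cA^{-n}$, using that $P^{[i]}\in\ZT\setminus\{0\}$ has content at least $1$. Hence, for each $\xi\in E$,
\[
 \max_{\lambda\in\cA,\ 0\le i,j<t}\frac{|(\lambda(P^{[i]}))^{[j]}(\xi)|}{\cont(\lambda(P^{[i]}))}\le c_\cA^{2n}4^{n}\,\delta_P
\]
(using $t\le n$). Similarly, from the elementary estimates $\|P^{[i]}\|\le 2^{n}\|P\|$ and $\cont(P^{[i]})\ge\cont(P)$, together with Lemma \ref{trans:lemmaH1}, one obtains $H(\lambda(P^{[i]}))\le (3H(\lambda))^{n}H(P^{[i]})\le (6c_\cA)^{n}H(P)$.

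Feeding these two bounds into Corollary \ref{result:corPP}, whose leading constant is $e^{3n^{2}}c_1$ with $c_1=e^{7n^{2}}(2+c_E)^{4n|E|t}\Delta_E^{-t^{2}}$, yields
\[
 \prod_{\xi\in E}\Big(\frac{|Q(\xi)|}{\cont(Q)}\Big)^{t}\le e^{10n^{2}}(2+c_E)^{4n|E|t}\Delta_E^{-t^{2}}\,(6c_\cA)^{2n^{2}}H(P)^{2n}\,(c_\cA^{2n}4^{n}\delta_P)^{|E|t}.
\]
It remains only to absorb numerical constants: using $|E|t\le n$, $c_\cA\ge 1$ and $2+c_E\ge 1$, the right-hand side is at most $(144\,e^{10})^{n^{2}}c_\cA^{4n^{2}}(2+c_E)^{4n^{2}}\Delta_E^{-t^{2}}H(P)^{2n}\delta_P^{|E|t}$, and since $144\,e^{10}\le e^{16}=(e^{4})^{4}$ this does not exceed $(e^{4}c_\cA(2+c_E))^{4n^{2}}\Delta_E^{-t^{2}}H(P)^{2n}\delta_P^{|E|t}$, which is the claim. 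The only point requiring care --- there being no real conceptual obstacle once Corollary \ref{result:corPP} is in hand --- is the bookkeeping around the divided-derivative identity: keeping track of the factor $a^{j}$ it introduces, and passing from the sup-norms that arise naturally to the contents $\cont(P_{\lambda,i})$ demanded by the corollary, both controlled through Lemma \ref{trans:lemmaH1}, while checking that the (generous) constants in the target statement leave enough room.
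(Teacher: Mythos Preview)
Your proof is correct and follows essentially the same route as the paper's: apply Corollary~\ref{result:corPP} to the family $\lambda(P^{[i]})$, bound heights and contents via Lemma~\ref{trans:lemmaH1}, use the divided-derivative identity $(\lambda_{a,b}P^{[i]})^{[j]}(\xi)=a^{j}\binom{i+j}{j}P^{[i+j]}(\lambda\cdot\xi)$ to control the values by $\delta_P$, and collect constants. Your estimate $c_\cA^{2n}4^{n}\delta_P$ for the value-over-content ratio is exactly the paper's $(2c_\cA)^{2n}\delta_P$, and your final bookkeeping matches.

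One small slip: your claim that ``fewer than two non-zero $P_{\lambda,i}$ forces $|\cA|=1$ and $\deg(P)=0$'' is incomplete --- it could also happen that $|\cA|=1$ and $t=1$ with $\deg(P)\ge 1$, in which case $Q$ is not constant and the inequality is not immediate. This is harmless: Corollary~\ref{result:corPP} is stated for a \emph{sequence} of $r\ge 2$ polynomials and its proof goes through with repetitions, so in that case you may simply list the single polynomial twice. The paper's proof does not even pause over this point.
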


\begin{proof}
We apply Corollary \ref{result:corPP} to the family of polynomials
$\lambda(P^{[i]})$ with $\lambda\in\cA$ and $0\le i <t$. Each of
them has degree at most $n$ and, using Lemma \ref{trans:lemmaH1}, we
find that their height and content satisfy
\begin{align*}
 H\big( \lambda(P^{[i]}) \big)
 &\le (3H(\lambda))^n H(P^{[i]})
  \le (6H(\lambda))^n H(P)
  \le (6c_\cA)^n H(P), \\
 \cont\big( \lambda(P^{[i]}) \big)
 &\ge H(\lambda)^{-n} \cont(P^{[i]})
  \ge H(\lambda)^{-n} \cont(P)
  \ge c_\cA^{-n},
\end{align*}
where the last step in the second estimate comes from the hypothesis
that $P$ has integer coefficients. Moreover, upon writing
$\lambda=\lambda_{a,b}$ with $a\in\Qplus$ and $b\in\bQ$, we find for
each $\xi\in E$ and $j=0,1,\dots,t-1$,
\[
 \big( \lambda(P^{[i]}) \big)^{[j]}(\xi)
  = \binom{i+j}{i} a^j P^{[i+j]}(a\xi+b)
  = \binom{i+j}{i} a^j P^{[i+j]}(\lambda\cdot\xi)
\]
Since $|a| \le H(\lambda) \le c_\cA$, we deduce that
\[
 \frac{\big| \big( \lambda(P^{[i]}) \big)^{[j]}(\xi) \big|}
      {\cont\big( \lambda(P^{[i]}) \big)}
 \le
 2^{2t}c_\cA^{t+n}\delta_P
 \le
 (2c_\cA)^{2n} \delta_P.
\]
According to Corollary \ref{result:corPP}, this implies that
\[
 \prod_{\xi\in E} \left( \frac{|Q(\xi)|}{\cont(Q)} \right)^t
 \le
 c \big( (6c_\cA)^n H(P) \big)^{2n} \big( (2c_A)^{2n} \delta_P \big)^{|E|t},
\]
where $c = e^{10n^2} (2+c_E)^{4n|E|t} \Delta_E^{-t^2}$. The
conclusion follows using $|E|t\le n$.
\end{proof}

The next proposition analyzes the outcome of the preceding result
through the linearization process of \S\ref{sec:prelim}, with the
help of the degree and height estimates of Lemma
\ref{trans:lemmaH3}.

\begin{proposition}
 \label{basic:propR}
Let $n,s,t\in\bN^*$ with $st\le n$, let $\cA$ be a finite subset of
$\cL$, and let $E$ be a finite subset of $\bC$.  Suppose that
$\min(|\cA|,|E|) \ge s$.  Moreover, let $X$ be a real number with
\begin{equation}
 \label{propR:eqprop1}
 X
 \ge
 \max\{ 3^n, c_\cA^n, (2+c_E)^n, \delta_E^{-s^2t^2/n} \},
\end{equation}
where $c_\cA$ and $c_E$ are as in Proposition \ref{basic:propQ}, and
assume that there exists a non-zero polynomial $P$ of $\ZT$ of
degree at most $n$ and height at most $X$ satisfying
\begin{equation}
 \label{propR:eqprop2}
 \max\{|P^{[j]}(\lambda\cdot\xi)| \,;\,
    \lambda\in\cA,\, \xi\in E,\, 0\le j< 2t \}
 \le
 X^{-\kappa n/(st)}
\end{equation}
for some real number $\kappa > 27$. Then, there exist a primary
polynomial $S\in\QT$ and a point $\xi\in E$ with
\begin{equation}
 \label{propR:eqprop3}
 \deg(S)\le \frac{5n}{st},
 \quad
 H(S) \le X^{10/(st)}
 \et
 \frac{|S(\xi)|}{\cont(S)}
 \le
 X^{-\kappa'n/(st)^2},
\end{equation}
where $\kappa'=(\kappa-27)/16$.
\end{proposition}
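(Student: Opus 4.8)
The strategy is to feed the data into Proposition \ref{basic:propQ}, convert the resulting estimate on $\prod_{\xi\in E}(|Q(\xi)|/\cont(Q))^t$ into the linearization hypothesis \eqref{lemma:lin:eq1}, apply Lemma \ref{lemma:linearization} to extract an irreducible factor $R$ of $Q$ and then a power $S=R^k$, and finally locate a single point $\xi\in E$ at which $S$ is small by the pigeonhole principle on the product over $E$. First I would introduce the gcd $Q$ of the polynomials $\lambda(P^{[i]})$ ($\lambda\in\cA$, $0\le i<t$) exactly as in Proposition \ref{basic:propQ}. With $\delta_P$ as defined there, hypothesis \eqref{propR:eqprop2} gives $\delta_P \le X^{-\kappa n/(st)}$, while \eqref{propR:eqprop1} controls all the auxiliary constants: $c_\cA^n\le X$, $(2+c_E)^n\le X$, $3^n\le X$, and $\Delta_E^{-t^2}\le \min(1,\delta_E)^{-(s^2/2)t^2}\le X^{s^2t^2/(2n)\cdot\,?}$ — more precisely $\delta_E^{-s^2t^2/n}\le X$ forces $\Delta_E^{-t^2}\le X^{1/2}$ after using $|E|\le n/t\le n$ and the crude bound $\Delta_E\ge\min(1,\delta_E)^{(1/2)|E|^2}$. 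Substituting into the conclusion of Proposition \ref{basic:propQ}, all the $X$-powers combine and, using $H(P)\le X$ and $|E|t\le n$, one gets a bound of the shape
\[
 \prod_{\xi\in E}\Big(\frac{|Q(\xi)|}{\cont(Q)}\Big)^t
 \le X^{c_0 n^2/?}\, X^{-\kappa n^2/(st)}
\]
for an absolute constant; dividing the exponent of $X$ by $n$ (legitimate since $\kappa$ is large and $st\le n$) yields $\prod_{\xi\in E}(|Q(\xi)|/\cont(Q))^t \le X^{-(\kappa-c_1)n/(st)}$ for a small absolute constant $c_1$, hence (taking $t$-th roots formally, or rather keeping the $t$-th power) a bound $\prod_{\xi\in E}(|Q(\xi)|/\cont(Q)) \le X^{-(\kappa-c_1)n/(s t^2)}$.

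Next I would invoke Lemma \ref{lemma:linearization} with the sequence $(\xi_i)$ being the points of $E$ each counted once, with $\rho = 5/(st)$ (an upper bound for $\deg(Q)/n$ and for $\log H(Q)/\log X$ up to the constants: indeed $\deg(Q)\le n$ trivially but we want the sharper $\deg(Q)\le 5n/(st)$, which comes from Lemma \ref{trans:lemmaH3} since $Q$ divides all the $\lambda(P^{[i]})$, hence each irreducible factor $R$ of $Q$ satisfies $\deg(R)\le n/(st)$; summing multiplicities and using that $Q\mid P$ up to the automorphisms gives $\deg(Q)\le n/(st)$ at least, and the height bound $H(Q)\le X^{O(1/(st))}$ likewise from Lemma \ref{trans:lemmaH3} combined with $H(P)\le X$). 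One must check the quantitative hypothesis $e^n\le X$ (true since $3^n\le X$) and that $\prod(|Q(\xi)|/\cont(Q)) < (X^{\deg Q}H(Q)^n)^{-c/\rho}$ with $c$ chosen so that $c/\rho$ roughly matches $\kappa$; concretely one takes $c = (\kappa-27)/(\text{small})$ and checks the inequality using $\deg Q, \log H(Q)\le \rho n$ and the bound on $\prod(|Q(\xi)|/\cont(Q))$ just obtained. Lemma \ref{lemma:linearization}(a) then produces an irreducible $R\mid Q$ with $\prod_{\xi\in E}(|R(\xi)|/\cont(R)) < (X^{\deg R}H(R)^n)^{-c/(2\rho)}$, and part (b) gives $S=R^k$ with $\deg(S)\le\rho n = 5n/(st)$, $H(S)\le X^{2\rho}=X^{10/(st)}$ and $\prod_{\xi\in E}(|S(\xi)|/\cont(S)) < X^{-cn/4}$.

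Finally, since $S$ is a power of an irreducible polynomial it is in particular primary, and since $\prod_{\xi\in E}(|S(\xi)|/\cont(S)) < X^{-cn/4}$ there must be at least one $\xi\in E$ with $(|S(\xi)|/\cont(S))^{|E|} < X^{-cn/4}$, hence $|S(\xi)|/\cont(S) < X^{-cn/(4|E|)} \le X^{-cn/(4)\cdot t/n}\cdot(\dots)$ — using $|E|\le n/t$ this is $\le X^{-ct/4}$, and since also $|E|\ge s$ one gets the cleaner $|S(\xi)|/\cont(S) \le X^{-cn/(4|E|)}\le X^{-c n/(4)\cdot 1/?}$; pushing the bookkeeping so that the exponent is $\kappa' n/(st)^2$ with $\kappa'=(\kappa-27)/16$ is exactly where the constants $27$ and $16$ in the statement are calibrated, and this is the one step I expect to be slightly delicate: one has to choose $c$ in Lemma \ref{lemma:linearization} optimally (essentially $c\approx(\kappa-27)/4$) and track how the three reductions — dividing the $X$-exponent by $n$, halving in part (a), quartering in part (b), and dividing by $|E|\le n/(st)\cdot s = n/t$... rather $|E|\le n/t$ — compound, together with the loss from the auxiliary constants, to leave precisely $\kappa'=(\kappa-27)/16$. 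The hypothesis $\kappa>27$ is what guarantees $c>0$ and hence that Lemma \ref{lemma:linearization} applies; everything else is routine substitution using the bounds collected from \eqref{propR:eqprop1}.
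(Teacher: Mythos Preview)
There is a genuine gap in your plan: you assert that $\deg(Q)\le 5n/(st)$ and $H(Q)\le X^{O(1/(st))}$, so that Lemma \ref{lemma:linearization} can be applied to $Q$ directly with $\rho=5/(st)$. But Lemma \ref{trans:lemmaH3} bounds only the degree and height of each \emph{irreducible factor} of $Q$; it says nothing about $Q$ itself. The gcd $Q$ may well have many distinct irreducible factors and there is no mechanism to control their number, so in general one only has the trivial bounds $\deg(Q)\le n$ and $H(Q)\le e^nH(\lambda P)\le X^4$. Your sentence ``summing multiplicities and using that $Q\mid P$ up to the automorphisms gives $\deg(Q)\le n/(st)$'' does not follow from anything available.

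The paper's proof circumvents this by applying Lemma \ref{lemma:linearization} \emph{twice}. First it reduces to $|E|=s$, then applies the lemma to $Q$ with the crude $\rho=4$, obtaining an irreducible factor $R$ satisfying $\prod_{\xi\in E}(|R(\xi)|/\cont(R))^t\le (X^{\deg R}H(R)^n)^{-\kappa'}$. Now Lemma \ref{trans:lemmaH3} legitimately gives $\deg(R)\le n/(st)$ and $H(R)\le X^{5/(st)}$ for this single irreducible $R$. A pigeonhole over the $s$ points of $E$ (at this stage, before taking powers) isolates one $\xi$ with $|R(\xi)|/\cont(R)\le (X^{\deg R}H(R)^n)^{-\kappa'/(st)}$, and only then is Lemma \ref{lemma:linearization} invoked a second time, now with $\rho=5/(st)$, to produce the primary $S=R^k$. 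Your ordering (power first, pigeonhole after) would also lose a factor, since the bound $X^{-cn/4}$ from part (b) no longer scales with $\deg R$ and $H(R)$, and you would be dividing by $|E|$ rather than by $s$ without having first reduced $|E|$ to $s$. Finally, note that your estimate $\Delta_E^{-t^2}\le X^{1/2}$ is off by a factor of $n$ in the exponent: from $\delta_E^{-s^2t^2/n}\le X$ one gets $\Delta_E^{-t^2}\le\min(1,\delta_E)^{-s^2t^2}\le X^n$, which is what feeds into the constant $27$.
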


\begin{proof}
Upon replacing $E$ by a smaller subset if necessary, we may assume
without loss of generality that $|E|=s$. Let $Q$ be a greatest
common divisor in $\QT$ of the polynomials $\lambda(P^{[i]})$ with
$\lambda\in\cA$ and $0\le i <t$. Since $\Delta_E \ge
\min(1,\delta_E)^{s^2}$, the condition \eqref{propR:eqprop1} implies
in particular that $\Delta_E^{-t^2} \le \min(1,\delta_E)^{-s^2t^2}
\le X^n$, and so Proposition \ref{basic:propQ} gives
\begin{equation}
 \label{propR:eq1}
 \prod_{\xi\in E} \left( \frac{|Q(\xi)|}{\cont(Q)} \right)^t
 \le
 X^{(27 - \kappa)n}
 =
 X^{-16\kappa' n}.
\end{equation}
Choose any $\lambda\in\cA$. Since $Q$ divides $\lambda P$, we find
\begin{equation}
 \label{propR:eq2}
 \deg(Q) \le \deg(\lambda P) \le n
 \et
 H(Q) \le e^n H(\lambda P)
      \le (3ec_\cA)^n H(P)
      \le X^4,
\end{equation}
where the estimates for the degree and height of $\lambda P$ come
from Lemma \ref{trans:lemmaH1}.   Therefore Lemma
\ref{lemma:linearization} applies to the present situation with
$\rho=4$ and $c=8\kappa'$.  It produces an irreducible factor $R$ of
$Q$ in $\QT$ with
\begin{equation}
 \label{propR:eq3}
 \prod_{\xi\in E} \left( \frac{|R(\xi)|}{\cont(R)} \right)^t
 \le
 \big( X^{\deg(R)} H(R)^n \big)^{-\kappa'}.
\end{equation}
Since $|E| = s$, there also exists a point $\xi\in E$ for which
\begin{equation}
 \label{propR:eq4}
 \frac{|R(\xi)|}{\cont(R)}
 \le
 \big( X^{\deg(R)} H(R)^n \big)^{-\kappa'/(st)}.
\end{equation}
Moreover, according to Lemma \ref{trans:lemmaH3}, the polynomial $R$
satisfies
\[
 \deg(R) \le \frac{n}{st}
 \et
 H(R) \le (3c_\cA)^{2n/(st)}H(P)^{1/(st)} \le X^{5/(st)},
\]
like all irreducible factors of $Q$.  Applying Lemma
\ref{lemma:linearization} to $R$ with $\rho = 5/(st)$ and $c =
5\kappa'/(st)^2$, we deduce that some power $S$ of $R$ has the
required properties \eqref{propR:eqprop3}.
\end{proof}

It is not possible in general to improve significantly on the
estimates \eqref{propR:eq2}.  We will see however that this can be
done when the set $\cA$ contains a collection of automorphisms of
the form $\lambda_{a,0}$ with $a$ in a multiplicatively independent
subset of $\Qplus$ (see \S\ref{sec:gcd}). This then brings a
significant improvement on \eqref{propR:eq3} which automatically
carries to \eqref{propR:eq4}. The later step going from
\eqref{propR:eq3} to \eqref{propR:eq4} can also be improved in some
instances by noting that the values of $R$ on the set $E$ cannot be
uniformly small (see \S\ref{sec:multxi}).

The next result proves the statements 1) and 4) of Theorem
\ref{thm:main} by choosing of $\sigma_1=0$ and $\sigma_2=\sigma$ for
Part 1), and $\sigma_1=\sigma_2=\sigma$ for Part 4).

\begin{theorem}
 \label{thm:ixi+ir}
Let $\xi$ be a transcendental complex number, let $r$ be a non-zero
rational number, and let $\beta$, $\sigma_1$, $\sigma_2$, $\tau$,
$\nu$ be non-negative real numbers with
\[
 \beta > 1,
 \quad
 \sigma_1\le 3\sigma_2,
 \quad
 (1/3)\sigma_1+\sigma_2+\tau < 1
 \et
 \nu > 1+\beta-(1/3)\sigma_1-\sigma_2-\tau.
\]
Then, there are arbitrarily large values of $n$ for which there
exists no non-zero polynomial $P\in\ZT$ of degree at most $n$ and
height at most $\exp(n^\beta)$ with
\[
 \max\{|P^{[j]}(i_1\xi+i_2r)| \,;\,
   1\le i_1\le n^{\sigma_1},\,
   1\le i_2\le n^{\sigma_2},\,
   0\le j\le n^\tau \}
 \le
 \exp(-n^{\nu}).
\]
\end{theorem}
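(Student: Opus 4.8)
The plan is to argue by contradiction: assume that for every sufficiently large $n$ there is a non-zero polynomial $P_n\in\ZT$ of degree at most $n$ and height at most $\exp(n^\beta)$ satisfying the displayed smallness condition. The first move is to package the evaluation points as an orbit under the group $\cL$. Setting $\cA_n = \{\lambda_{i_1,\,i_2 r}\,;\, 1\le i_1\le n^{\sigma_1},\, 1\le i_2\le n^{\sigma_2}\}$ and $E_n = \{\xi\}$, one checks that $\lambda_{i_1,i_2r}\cdot\xi = i_1\xi + i_2 r$, so the hypothesis says precisely that $|P_n^{[j]}(\lambda\cdot\xi)|\le\exp(-n^\nu)$ for all $\lambda\in\cA_n$ and all $j\le n^\tau$. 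To exploit Proposition~\ref{basic:propR} we instead take $E_n$ to be a set of $s$ points of the form $i_1\xi+i_2r$ for suitably chosen exponents $s$ and $t\asymp n^\tau$, absorbing the remaining evaluation points into $\cA_n$; the key bookkeeping is that $c_{\cA_n}\le C n^{\max(\sigma_1,\sigma_2)}H(r)$ grows only polynomially in $n$, so that $X=\exp(n^\beta)$ dominates $\max\{3^n, c_{\cA_n}^n, (2+c_{E_n})^n,\dots\}$ as soon as $\beta>1$ and $n$ is large (here $c_{E_n}$ is bounded in terms of $|\xi|$ and $r$, and $\delta_{E_n}$ is bounded below by a fixed positive constant since the points $i_1\xi+i_2r$ are $\bZ$-linearly spread out and $\xi$ is transcendental).

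The second step is to feed this into Proposition~\ref{basic:propR} with an appropriate split. The conclusion produces a primary polynomial $S\in\QT$ and a point $\xi_0\in E_n$ with $\deg(S)\le 5n/(st)$, $H(S)\le X^{10/(st)} = \exp(10n^\beta/(st))$, and $|S(\xi_0)|/\cont(S)\le X^{-\kappa' n/(st)^2}$ for a $\kappa'$ that can be made arbitrarily large by taking $\kappa$ large, which is possible because $\nu > 1+\beta-(1/3)\sigma_1-\sigma_2-\tau$ gives us room in the exponent of the smallness hypothesis. The point $\xi_0$ has the form $i_1\xi + i_2 r$ with $i_1\ge 1$, hence $\xi_0\in\bQ(\xi)$ and in fact $\bQ(\xi_0)=\bQ(\xi)$ is of transcendence degree one over $\bQ$. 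Writing $S(\xi_0) = S(i_1\xi + i_2 r) = (\lambda_{i_1,i_2r}S)(\xi)$ and letting $\tilde S = \lambda_{i_1,i_2r}S\in\QT$, Lemma~\ref{trans:lemmaH1} shows $\deg(\tilde S)=\deg(S)$ and $H(\tilde S)\le (3H(\lambda))^{\deg(S)}H(S)$, so the degree and height estimates for $\tilde S$ are of the same order (the extra polynomial-in-$n$ factor raised to the power $\deg(S)=O(n/(st))$ costs only $O(n\log n/(st))$ in the exponent of the height, negligible against $n^\beta$). Thus $\tilde S$ is a non-zero polynomial in $\ZT$ (after clearing the content) of degree at most $5n/(st)$, height at most $\exp(Cn^\beta/(st))$, with $0<|\tilde S(\xi)|\le\exp(-\kappa' n^{1+\beta}/(st)^2 + O(\log\text{stuff}))$.

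The final step is to contradict Gel'fond's criterion, Lemma~\ref{gelfond:curve}, applied to the single transcendental $\xi$ (so $m=1$). We need to choose $s$ and $t$ with $st\le n$, $t\asymp n^\tau$, and $s\asymp n^{\sigma}$ for the right $\sigma$, so that the degree $n^{\alpha}\asymp n/(st)$ and height $\exp(n^{\beta'})\asymp\exp(n^{\beta}/(st))$ of $\tilde S$ fit the hypotheses with $\alpha\le\beta'$, while the smallness $\exp(-\kappa' n^{1+\beta}/(st)^2)$ beats $\exp(-n^{\alpha+\beta'+\delta})$ for some $\delta>0$; since $\alpha+\beta' \asymp (1+\beta)n\cdot(st)^{-1}$ whereas the actual exponent is $\asymp(1+\beta)n\cdot(st)^{-2}$ times $\kappa'$, this works once $\kappa'$ is large enough, i.e. once $\nu$ exceeds the stated threshold. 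The constraint $(1/3)\sigma_1+\sigma_2+\tau<1$ is exactly what guarantees $st = O(n^{(1/3)\sigma_1+\sigma_2+\tau})$ can be chosen with $st\le n$ (for large $n$) while still being a genuine growing quantity, and the constraint $\sigma_1\le 3\sigma_2$ dictates the optimal split of the $i_1,i_2$ grid between $\cA_n$ and $E_n$ — one wants the $s$ points of $E_n$ to come from the $i_1$-direction (contributing the factor-$(1/3)$ savings via the cubic growth allowed in Proposition~\ref{basic:propR}'s estimate $\deg(R)\le n/(st)$) balanced against the $i_2$-direction. I expect the main obstacle to be precisely this optimization: verifying that with $t=[c_1 n^\tau]$ and $s$ chosen so that $st$ matches the combined count of points used, the inequality $\kappa'/(st)^2 > (1/(st) + \text{small})\cdot(\text{const}) $ translating into $\nu > 1+\beta-(1/3)\sigma_1-\sigma_2-\tau$ comes out cleanly, and that the ``arbitrarily large $n$'' quantifier survives — which it does, since Lemma~\ref{gelfond:curve} already furnishes infinitely many good $n$, so the contradiction is reached along that subsequence.
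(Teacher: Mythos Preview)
Your overall strategy is the paper's: argue by contradiction, split the evaluation grid between $\cA$ and $E$, apply Proposition~\ref{basic:propR}, pull the resulting polynomial back to $\xi$ via $\lambda_{i_1,i_2r}$, and contradict Lemma~\ref{gelfond:curve}. But there is a genuine gap.

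You assert that $\delta_{E_n}$ is bounded below by a fixed positive constant ``since the points $i_1\xi+i_2r$ are $\bZ$-linearly spread out and $\xi$ is transcendental.'' Transcendence of $\xi$ only guarantees that these points are \emph{distinct}, i.e.\ $\delta_{E_n}>0$; it gives no uniform lower bound. If $\xi$ is a Liouville number, $\min_{(a,b)\neq(0,0)}|a\xi+br|$ over $|a|,|b|\le n$ can be smaller than $\exp(-n^M)$ for any $M$, and then the hypothesis $X\ge \delta_E^{-s^2t^2/n}$ of Proposition~\ref{basic:propR} fails. The paper handles this by a case split you have omitted: if $\delta_E\ge X^{-n/(st)^2}$ one proceeds exactly as you describe; if instead $\delta_E<X^{-n/(st)^2}$, then some nonzero $|i_1\xi+i_2r|$ is already so small that the polynomial $Q(T)=(i_1T+i_2r)^{[n/(st)]}$ directly satisfies the estimates \eqref{thm:ixi+r:eq1}, and one contradicts Lemma~\ref{gelfond:curve} along that branch as well. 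Without this second case your argument is incomplete.

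A secondary point: you leave the split between $\cA$ and $E$ unspecified, and your account of the factor $1/3$ is off. The paper takes
\[
 \cA=\big\{\lambda_{i_1,i_2r}:1\le i_1\le n^{\sigma_1/3},\ 0\le i_2\le \tfrac12 n^{\sigma_2}\big\},
 \quad
 E=\big\{i_1'\xi+i_2'r:1\le i_1'\le n^{2\sigma_1/3},\ 0\le i_2'\le \tfrac12 n^{\sigma_2-\sigma_1/3}\big\}.
\]
Since $\lambda_{i_1,i_2r}\cdot(i_1'\xi+i_2'r)=i_1i_1'\xi+(i_1i_2'+i_2)r$, the $i_1$-range splits \emph{multiplicatively} while the $i_2$-range must absorb the cross-term $i_1i_2'$. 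Balancing $|\cA|\asymp n^{a+\sigma_2}$ against $|E|\asymp n^{(\sigma_1-a)+(\sigma_2-a)}$ (so that both exceed $s$) forces $a=\sigma_1/3$; this is the source of the $1/3$, not any cubic feature of Proposition~\ref{basic:propR}. The hypothesis $\sigma_1\le 3\sigma_2$ is what makes the $i_2'$-range $[0,\tfrac12 n^{\sigma_2-\sigma_1/3}]$ nonempty.
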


\begin{proof}
Suppose on the contrary that such a polynomial $P$ exists for each
sufficiently large integer $n$, and choose a real number $\delta
>0$ such that
\[
 \nu \ge 1+\beta-(1/3)\sigma_1-\sigma_2-\tau + 4\delta
 \et
 1 \ge (1/3)\sigma_1+\sigma_2+\tau+3\delta.
\]
For a fixed large integer $n$ and a corresponding polynomial $P$,
define
\begin{align*}
 \cA
 &= \big\{ \lambda_{i_1,i_2r}\,;\,
    1\le i_1 \le n^{\sigma_1/3},\,
    0\le i_2 \le (1/2)n^{\sigma_2} \big\},   \\
 E
 &= \big\{ i_1\xi+i_2r \,;\,
    1\le i_1 \le n^{2\sigma_1/3},\,
    0\le i_2 \le (1/2)n^{\sigma_2-\sigma_1/3} \big\},
\end{align*}
where the values of $i_1$ and $i_2$ are restricted to integers.  Put
also
\[
 s=\big[ (2 + n^{\sigma_2+\sigma_1/3})/3 \big],
 \quad
 t=\big[ (1 + n^\tau)/2 \big],
 \quad
 X=\exp(n^\beta),
 \et
 \kappa=n^{3\delta}.
\]

Assume first that $\delta_E \ge X^{-n/(st)^2}$. Then, if $n$ is
sufficiently large, all the conditions of Proposition
\ref{basic:propR} are satisfied because we have $1\le st\le n$,
$|\cA|\ge s$, $|E|\ge s$, $c_\cA \le H(r) n^{\sigma_2}$ and $c_E \le
(|\xi|+|r|) n^{2\sigma_2}$, while the hypothesis on $P$ implies
\begin{equation*}
 \max\{|P^{[j]}(\lambda\cdot x)| \,;\,
    \lambda\in\cA,\, x\in E,\, 0\le j< 2t \}
 \le
 \exp(-n^{\nu})
 \le
 X^{-\kappa n/(st)}.
\end{equation*}
In this case, Proposition \ref{basic:propR} provides us with a
non-zero polynomial $S\in\QT$ and a point $x\in E$ satisfying
\[
 \deg(S)\le \frac{5n}{st},
 \quad
 H(S) \le \exp\left( \frac{10n^\beta}{st} \right)
 \et
 \frac{|S(x)|}{\cont(S)}
 \le
 \exp\left( - \frac{2n^{1+\beta+2\delta}}{s^2t^2} \right).
\]
Write $x=i_1\xi+i_2r$ with $i_1,i_2\in \bZ$ and put $Q =
\lambda_{i_1,i_2r}S$ so that $Q(\xi)=S(x)$.  Then, $Q$ is a non-zero
polynomial of $\QT$ and, using the crude estimates $1\le i_1\le n$
and $0\le i_2 \le n$, Lemma \ref{trans:lemmaH1} gives
$\deg(Q)=\deg(S)$,
\[
 \frac{H(Q)}{H(S)}
   \le (3nH(r))^{\deg(S)}
 \et
 \frac{\cont(S)}{\cont(Q)}
   \le (nH(r))^{\deg(S)}.
\]
Since $\beta>1$ and $st \le n$, the last two quantities are bounded
above by $\exp(n^\beta/(st)) \le \exp(n^{1+\beta}/(st)^2)$ for $n$
sufficiently large, and so the polynomial $Q$ satisfies
\begin{equation}
 \label{thm:ixi+r:eq1}
 \deg(Q)\le \frac{5n}{st},
 \quad
 H(Q) \le \exp\left( \frac{11n^\beta}{st} \right)
 \et
 \frac{|Q(\xi)|}{\cont(Q)}
 \le
 \exp\left( - \frac{n^{1+\beta+2\delta}}{s^2 t^2} \right).
\end{equation}

If $\delta_E < X^{-n/(st)^2}$, there exist integers $i_1$ and $i_2$
not both $0$ with absolute value at most $n$ such that
$|i_1\xi+i_2r|\le \exp(-n^{1+\beta}/(st)^2)$.  In this case, we
define $Q(T)=(i_1T+i_2r)^{[n/(st)]}$.  Since $n/(st) \ge
n^{3\delta}$, this polynomial satisfies \eqref{thm:ixi+r:eq1} if $n$
is large enough. Thus, for each sufficiently large $n$, there exists
a non-zero polynomial $Q\in\QT$ satisfying \eqref{thm:ixi+r:eq1}.
Since $s$ and $t$ behave like polynomials in $n$, this contradicts
Gel'fond's Lemma \ref{gelfond:curve}.
\end{proof}

\begin{proof}[Proof of Theorem \ref{thm:main}, part 2)]
Suppose on the contrary that for each sufficiently large integer
$n$, the polynomial $P_n$ satisfies
\begin{equation}
 \label{proof2:eq1}
 \max\big\{ |P_n^{[j]}(r^i\xi)| \,;\,
            0\le i\le n^\sigma,\, 0\le j \le n^\tau \big\}
 \le \exp(-n^\nu),
\end{equation}
and choose a positive real number $\delta$ such that $\nu \ge
1+\beta-\sigma-\tau+4\delta$. We claim that for any sufficiently
large integer $n$, all the hypotheses of Proposition
\ref{basic:propR} are satisfied with
\begin{gather*}
 P=P_n, \quad
 s=[(1+n^{\sigma})/2], \quad
 t=[(1+n^\tau)/2], \quad
 X=\exp(n^\beta), \quad
 \kappa=n^{3\delta}, \\
 A=\{1,r,\dots,r^{s-1}\}, \quad
 \cA=\{\lambda_{a,0}\,;\, a\in A\}
 \et
 E=\{\xi,r\xi,\dots,r^{s-1}\xi\}.
\end{gather*}
First of all, we have $\lambda\cdot x \in \{r^i\xi\,;\, 0\le i \le
n^\sigma\}$ for each $\lambda\in \cA$ and each $x\in E$, and so the
main condition \eqref{propR:eqprop2} of this proposition follows
from \eqref{proof2:eq1} and $X^{\kappa n/(st)} \le \exp(n^\nu)$. We
also find $c_\cA \le H(r)^s$, $c_E \le |\xi|\max(1,|r|)^s$ and
$\delta_E \ge c_1^s$ where $c_1=\min\{1,|r|,|r-1|,|\xi|\}$.  Since
$st\le n$, this implies that $3^n$, $c_\cA^n$, $c_E^n$ and
$\delta_E^{-s^2t^2/n}$ are all bounded above by $c_2^{ns}$ for some
constant $c_2\ge 1$. As $\beta > 1+\sigma$, this shows that the
technical condition \eqref{propR:eqprop1} is also satisfied if $n$
is large enough. Then, Proposition \ref{basic:propR} provides us
with a non-zero polynomial $S\in\QT$ and an integer $i$ with $0\le
i\le s-1$ satisfying
\[
 \deg(S)\le \frac{5n}{st},
 \quad
 H(S) \le \exp\left( \frac{10n^\beta}{st} \right)
 \et
 \frac{|S(r^i\xi)|}{\cont(S)}
 \le
 \exp\left( - \frac{2n^{1+\beta+2\delta}}{s^2t^2} \right).
\]
Put $Q(T) = S(r^iT) = \lambda_{r^i,0}S$, so that $Q(\xi)=S(r^i
\xi)$. Then, $Q$ is a non-zero polynomial of $\QT$ and Lemma
\ref{trans:lemmaH1} gives $\deg(Q)=\deg(S)$,
\[
 \frac{H(Q)}{H(S)}
   \le (3H(r)^s)^{\deg(S)}
 \et
 \frac{\cont(S)}{\cont(Q)}
   \le H(r)^{s\deg(S)}.
\]
As $\beta>1+\sigma$ and $st\le n$, these quantities are both bounded
above by $\exp(n^\beta/(st)) \le \exp(n^{1+\beta}/(st)^2)$ if $n$ is
sufficiently large, and then $Q$ satisfies \eqref{thm:ixi+r:eq1}.
Again this contradicts Gel'fond's Lemma \ref{gelfond:curve}.
\end{proof}

%
%

\section{Estimates for an intersection}
\label{sec:inter}

Throughout this section, we fix a positive integer $s$ and we denote
by $(\ue_1,\dots,\ue_s)$ the canonical basis of $\Zs$.  For each
$\ux\in\Zs$ and each subset $E$ of $\Zs$, we define
\[
 \cO(\ux)=\{\ux+\ue_1,\dots,\ux+\ue_s\}
 \et
 \cO(E) = \bigcup_{i=1}^s (E+\ue_i) = \bigcup_{\ux\in E} \cO(\ux),
\]
so that for subsets $E$ and $F$ of $\Zs$, we have
\begin{equation}
 \label{inter:eq1}
 E \subseteq (F-\ue_1)\cap \cdots \cap (F-\ue_s)
 \quad\Longleftrightarrow\quad
 \cO(E)\subseteq F.
\end{equation}
We are interested here in the following type of result.

\begin{proposition}
 \label{inter:propC1}
Let $E$ and $F$ be finite subsets of $\Zs$ with $\cO(E)\subseteq F$.
Suppose that $|F| \le s^2/4$.  Then, we have $|F| \ge (s/2)|E|$.
Moreover, if $\ux_1,\dots,\ux_r$ denote the distinct elements of
$E$, there is a partition $F=F_1\amalg\cdots\amalg F_r\amalg
F_{r+1}$ of $F$ such that, for each $i=1,\dots,r$, we have
$F_i\subseteq \cO(\ux_i)$ and $|F_i| \ge s/2$.
\end{proposition}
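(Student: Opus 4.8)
The plan is to recognise $\cO$ as a sumset, prove a lower bound for $|\cO(S)|$ in terms of $|S|$ by a second-moment (double-counting) argument, and then extract the partition from the deficiency form of Hall's marriage theorem. First I dispose of the degenerate cases: since $|\cO(\ux_i)|=s$ for every $\ux_i\in E$ and $\cO(E)\subseteq F$, the hypothesis $|F|\le s^2/4$ forces $E=\emptyset$ as soon as $s\le 3$, and then the statement is vacuous; so I assume from now on that $E\neq\emptyset$, hence $s\ge 4$.

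\textbf{The sumset estimate.} Put $B=\{\ue_1,\dots,\ue_s\}$, so that $\cO(S)=S+B$ for every $S\subseteq\Zs$. Fix a non-empty finite $S\subseteq\Zs$ and, for $\uy\in\cO(S)$, set $\iota(\uy)=\big|\{i : \uy-\ue_i\in S\}\big|\ge 1$. Counting the pairs $(\ux,i)\in S\times\{1,\dots,s\}$ via $\uy=\ux+\ue_i$ gives $\sum_{\uy\in\cO(S)}\iota(\uy)=s\,|S|$. Counting the triples $(\uy,i,i')$ with $\uy-\ue_i\in S$ and $\uy-\ue_{i'}\in S$ gives $\sum_{\uy\in\cO(S)}\iota(\uy)^2$: the diagonal $i=i'$ contributes $s|S|$, and each off-diagonal triple determines the ordered pair of distinct points $(\uy-\ue_i,\uy-\ue_{i'})$ of $S$, whose difference $\ue_{i'}-\ue_i$ in turn recovers $i,i'$ (the positions of its $+1$ and its $-1$) and then $\uy$; so the number of off-diagonal triples is at most $|S|(|S|-1)$, whence $\sum_{\uy}\iota(\uy)^2\le |S|(|S|+s-1)$. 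Cauchy--Schwarz then gives
\[
 \big(s\,|S|\big)^2\ \le\ |\cO(S)|\sum_{\uy\in\cO(S)}\iota(\uy)^2\ \le\ |\cO(S)|\,|S|\,(|S|+s-1),
\]
that is, $|\cO(S)|\ge s^2|S|/(|S|+s-1)$.

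\textbf{Consequences and the partition.} For non-empty $S\subseteq E$ we have $\cO(S)\subseteq\cO(E)\subseteq F$, so the estimate together with $|F|\le s^2/4$ forces $4|S|\le|S|+s-1$, i.e.\ $|S|\le(s-1)/3$; hence $|S|+s-1<2s$ and $|\cO(S)|\ge \frac{s^2|S|}{2s}=\frac s2|S|$, and in fact $|S|+s-1\le\frac43(s-1)$ yields $|\cO(S)|\ge\frac{3s}4|S|\ge\lceil s/2\rceil|S|$ because $s\ge4$. Taking $S=E$ gives the first assertion $|F|\ge|\cO(E)|\ge\frac s2|E|$. For the partition, consider the bipartite graph with left part $\{\ux_1,\dots,\ux_r\}$, right part $F$, and an edge $\ux_i\sim\uy$ whenever $\uy\in\cO(\ux_i)$. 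Replace each $\ux_i$ by $d:=\lceil s/2\rceil$ copies sharing its neighbourhood (note $d\le s$); for any set $W$ of blown-up vertices, if $Z\subseteq E$ is the set of underlying $\ux_i$'s then the neighbourhood of $W$ is $\cO(Z)$ and $|W|\le d|Z|\le|\cO(Z)|$ by the inequality just proved, so Hall's condition holds and there is a matching saturating all blown-up vertices. This assigns to each $\ux_i$ a set $F_i\subseteq\cO(\ux_i)$ with $|F_i|=d\ge s/2$, the $F_i$ being pairwise disjoint; setting $F_{r+1}=F\setminus(F_1\cup\cdots\cup F_r)$ gives the required partition.

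\textbf{Main difficulty.} The crux is the sumset estimate, and the one delicate point in its proof is seeing that each ordered pair of distinct elements of $S$ is hit by at most one triple $(\uy,i,i')$, so that $\sum\iota(\uy)^2$ is governed by $|S|^2$ rather than by the trivial bound $|S|s^2$ (which would only reprove $|\cO(S)|\ge|S|$). The hypothesis $|F|\le s^2/4$ is then used precisely to push $|S|+s-1$ below $2s$, making $s^2|S|/(|S|+s-1)$ beat $(s/2)|S|$; the deficiency-Hall step at the end is standard.
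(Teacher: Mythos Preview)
Your proof is correct, but it takes a genuinely different route from the paper's.

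The paper argues directly and greedily. It first records the key fact (which you also use, implicitly, in your off-diagonal count) that $|\cO(\ux)\cap\cO(\uy)|\le 1$ for distinct $\ux,\uy$. It then simply \emph{defines} $F_i=\cO(\ux_i)\setminus\big(\cO(\ux_1)\cup\cdots\cup\cO(\ux_{i-1})\big)$, so that $|F_i|\ge s-(i-1)$ by the pairwise-intersection bound. Since the first $\min(r,[s/2]+1)$ of these already have size $\ge s/2$, the hypothesis $|F|\le s^2/4$ forces $r\le s/2$, and then \emph{all} $|F_i|\ge s/2$.

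Your argument instead packages the same pairwise-intersection fact into a second-moment estimate $|\cO(S)|\ge s^2|S|/(|S|+s-1)$, uses $|F|\le s^2/4$ to cap $|S|$ and hence push this up to $|\cO(S)|\ge\lceil s/2\rceil\,|S|$ for every $S\subseteq E$, and then feeds this uniform expansion into a blown-up Hall argument to extract the $F_i$'s. What you gain is a clean standalone sumset inequality and a partition with uniform block sizes $|F_i|=\lceil s/2\rceil$, independent of any ordering of $E$; what the paper gains is a completely elementary, constructive argument with no appeal to Cauchy--Schwarz or Hall. Both approaches pivot on the same combinatorial point---that the differences $\ue_i-\ue_j$ are distinct---but the paper's greedy definition of the $F_i$ gets there in one line.
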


Note that, for any given finite set $E$, the equivalent conditions
\eqref{inter:eq1} hold with $F=\cO(E)$, and then we have $|F| \le s
|E|$. Thus any general estimate of the form $|F| \ge (s/c) |E|$ with
a constant $c\ge 1$ is optimal up to the value of $c$. The first
assertion of the proposition shows that we can take $c=2$ when $|F|
\le s^2/4$. We will show that similar estimates hold in general when
the cardinality of $F$ is at most polynomial in $s$, with similar
partitions of $E$ and $F$ into subsets of relatively small
diameters.  In Appendix \ref{sec:appendix2}, we show that, for any
pair of subsets $E$ and $F$ satisfying the slightly stronger
condition $E \subseteq F\cap (F-\ue_1)\cap \cdots \cap (F-\ue_s)$,
we have $|E| \le (1/s) |F| \log |F|$, but the proof does not provide
corresponding partitions for $E$ and $F$.

\begin{proof}
We first observe that the differences $\ue_i-\ue_j$ with $1\le
i,j\le s$ and $i\neq j$ are all distinct and thus, for any pair of
distinct points $\ux$, $\uy$ of $\Zs$, the set $\cO(\ux)\cap
\cO(\uy)$ contains at most one element.

Define $F_i = \cO(\ux_i) \setminus (\cO(\ux_1) \cup\cdots\cup
\cO(\ux_{i-1}))$ for $i=1,\dots,r$, and let $F_{r+1}$ denote the
complement of $F_1 \cup\cdots\cup F_r$ in $F$.  Then, $F_1,
\dots,F_{r+1}$ form a partition of $F$ with $F_i\subseteq
\cO(\ux_i)$ for $i=1,\dots,r$.  By virtue of the preceding
observation, we also have
\[
 |F_i|
  \ge |\cO(\ux_i)| - \sum_{j=1}^{i-1} |\cO(\ux_i)\cap\cO(\ux_j)|
  \ge s-(i-1) = s-i+1
\]
for each $i\le r$.  In particular, this gives $|F_i| \ge s/2$ for
$i=1,\dots,\min(r,[s/2]+1)$, and so $|F| \ge (s/2) \min(r,[s/2]+1)$.
Since $|F|\le s^2/4$, we conclude that $r \le s/2$ and thus that
$|F_i| \ge s/2$ for $i=1,\dots,r$.
\end{proof}

The statement of our main proposition requires more notation.  Given
any point $\ux = (x_1,\dots,x_s) \in \Zs$, we write $\|\ux\|_1$ to
denote its $\ell_1$-norm $|x_1|+\cdots+|x_s|$.  We also denote by
$U$ the subgroup of $\Zs$ given by
\[
 U = \{ (x_1,\dots,x_s) \in \Zs \,;\, x_1+\cdots+x_s=0 \}.
\]
For each integer $k\ge 0$, we define
\[
 C_k = \{ \ux \in U \,;\, \|\ux\|_1 \le 2k \},
\]
and observe, for later use, that any point of $C_k$ has at most $k$
positive coordinates and at most $k$ negative coordinates.  For any
point $\ux\in\Zs$ and any integer $k\ge 0$, we also define
\[
 C_k(\ux) = \ux + C_k.
\]

\begin{proposition}
 \label{inter:propC2}
Let $E$ and $F$ be finite subsets of $\Zs$ with $\cO(E)\subseteq F$.
Suppose that
\begin{equation}
 \label{propC2:eq1}
 |F| \le \frac{1}{2^{\ell+1}(\ell+1)!}\binom{s}{\ell+2}
\end{equation}
for some integer $\ell$ with $0\le \ell\le s-2$.  Then, we have
\begin{equation}
 \label{propC2:eq2}
 |F| \ge \frac{s-\ell}{2(\ell+1)} |E|.
\end{equation}
More precisely, if $E$ is not empty, there exist an integer $r\ge
1$, a sequence of points $\ux_1,\dots,\ux_r$ of $E$, and partitions
$E = E_1 \amalg\cdots\amalg E_r$ and $F = F_1 \amalg\cdots\amalg F_r
\amalg F_{r+1}$ of $E$ and $F$ which, for $i=1,\dots,r$ satisfy
\[
 \text{a)}\ E_i \subseteq C_\ell(\ux_i),
 \qquad
 \text{b)}\ F_i \subseteq \cO(E_i),
 \qquad
 \text{c)}\ |F_i| \ge \frac{s-\ell}{2(\ell+1)} |E_i|.
\]
\end{proposition}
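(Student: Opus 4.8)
The plan is to argue by induction on $\ell$. The base case $\ell=0$ is precisely Proposition~\ref{inter:propC1}: indeed $\tfrac12\binom{s}{2}=\tfrac{s(s-1)}{4}\le\tfrac{s^2}{4}$, and the partition produced there (with $\ux_1,\dots,\ux_r$ the distinct points of $E$, $F_i\subseteq\cO(\ux_i)$, $|F_i|\ge s/2$) is exactly an instance of a)--c) with $E_i=\{\ux_i\}\subseteq C_0(\ux_i)$ and $r=|E|$. Note that the weak consequence \eqref{propC2:eq2} follows automatically from the refined partition, since $|F|\ge\sum_{i=1}^r|F_i|\ge\tfrac{s-\ell}{2(\ell+1)}\sum_{i=1}^r|E_i|=\tfrac{s-\ell}{2(\ell+1)}|E|$; so throughout it suffices to produce the partition. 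Also, if the bound \eqref{propC2:eq1} happens to be $\le\tfrac{1}{2^\ell\ell!}\binom{s}{\ell+1}$ (which is the case precisely when $s$ is not too large relative to $\ell$), the whole datum $(E,F)$ already satisfies the hypothesis at level $\ell-1$, and the induction hypothesis gives a partition into balls $C_{\ell-1}(\ux_i)\subseteq C_\ell(\ux_i)$ with efficiency $\tfrac{s-\ell+1}{2\ell}\ge\tfrac{s-\ell}{2(\ell+1)}$; so we may assume we are in the range where \eqref{propC2:eq1} exceeds $\tfrac{1}{2^\ell\ell!}\binom{s}{\ell+1}$.

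The new ingredient for $\ell\ge1$ is the counting lemma: \emph{for any $\ux_0\in\Zs$ and any finite $D\subseteq C_\ell(\ux_0)$ one has $|\cO(D)|\ge\tfrac{s-\ell}{\ell+1}\,|D|$.} To prove it, call $a\in\uns$ \emph{admissible} for $\ux\in D$ if $(\ux-\ux_0)_a\ge0$; since $\ux-\ux_0\in C_\ell$ has at most $\ell$ negative coordinates, each $\ux\in D$ has at least $s-\ell$ admissible directions. Consider the map $\Phi$ sending an admissible pair $(\ux,a)$ to $\ux+\ue_a\in\cO(D)$. If $\Phi(\ux,a)=\uz$ then $a$ is a positive coordinate of $\uz-\ux_0$, so $\Phi^{-1}(\uz)$ injects into the set of positive coordinates of $\uz-\ux_0$ via $(\ux,a)\mapsto a$; and since $\uz-\ux_0=(\ux-\ux_0)+\ue_a$ with $\ux-\ux_0\in C_\ell$ having at most $\ell$ positive coordinates, $\uz-\ux_0$ has at most $\ell+1$ positive coordinates. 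Hence each fibre of $\Phi$ has at most $\ell+1$ elements, and $|\cO(D)|\ge\tfrac{1}{\ell+1}(s-\ell)|D|$. This uses both halves of the observation recorded just before the statement of Proposition~\ref{inter:propC2}, and the factor $2$ gap between $\tfrac{s-\ell}{\ell+1}$ and the target $\tfrac{s-\ell}{2(\ell+1)}$ is exactly the slack that will absorb overlaps.

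With the lemma in hand I would build the partition by a greedy peeling organized around the graph $G$ on $E$ with $\ux\sim\uy$ whenever $\cO(\ux)\cap\cO(\uy)\ne\emptyset$, i.e.\ (for $\ux\ne\uy$) $\ux-\uy=\ue_b-\ue_a$ with $a\ne b$. As in Proposition~\ref{inter:propC1}, $|\cO(\ux)\cap\cO(\uy)|\le1$; more generally, for disjoint $A,B\subseteq\Zs$, each point of $\cO(A)\cap\cO(B)$ determines a distinct edge of $G$ joining $A$ to $B$, so $|\cO(A)\cap\cO(B)|$ is at most the number of such edges. Since each connected component of $G$ is $\cO$-disjoint from the rest of $E$, I would first peel off, one at a time, the $G$-components of $\ell_1$-diameter at most $2\ell$: each is a single cluster $E_i\subseteq C_\ell(\ux_i)$ (pick any $\ux_i$ in it), one takes $F_i=\cO(E_i)$, the lemma gives $|F_i|\ge\tfrac{s-\ell}{\ell+1}|E_i|\ge\tfrac{s-\ell}{2(\ell+1)}|E_i|$, and removing the component preserves the hypothesis $\cO(E')\subseteq F'$ for the remaining data. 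The remaining $G$-components have diameter $>2\ell$; those whose image $\cO(E^{(j)})$ has cardinality at most $\tfrac{1}{2^\ell\ell!}\binom{s}{\ell+1}$ are handled by applying the induction hypothesis at level $\ell-1$ to the pair $(E^{(j)},\cO(E^{(j)}))$ (again producing $C_{\ell-1}\subseteq C_\ell$ balls with efficiency $\ge\tfrac{s-\ell}{2(\ell+1)}$, and no overlap problem since distinct components are $\cO$-disjoint). For a $G$-component $E^{(j)}$ of diameter $>2\ell$ with $|\cO(E^{(j)})|$ larger than that threshold, one must genuinely cut it into balls: process its points in increasing order of a generic linear functional $\phi$, at each step taking the as-yet-unused point $\ux_a$ of smallest $\phi$, setting $E^{(j)}_a=C_\ell(\ux_a)\cap(\text{unused part})$ and $F^{(j)}_a=\cO(E^{(j)}_a)\setminus(F^{(j)}_1\cup\cdots\cup F^{(j)}_{a-1})$, and hoping the slack factor $2$ suffices.

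The main obstacle is precisely this last overlap estimate, namely showing $\bigl|\cO(E^{(j)}_a)\cap(F^{(j)}_1\cup\cdots\cup F^{(j)}_{a-1})\bigr|\le\tfrac12|\cO(E^{(j)}_a)|$, so that the lemma yields $|F^{(j)}_a|\ge\tfrac{s-\ell}{2(\ell+1)}|E^{(j)}_a|$. One knows that $\cO$ of a set inside $C_\ell(\ux_a)$ lies inside $C_{2\ell+1}(\ux_a)$, so only earlier centres within $\ell_1$-distance $4\ell+2$ of $\ux_a$ can contribute, and that such contributions are counted by $G$-edges between the pieces; but the number of those earlier centres is still large in $s$, so the real work is to show that, per point $\ux\in E^{(j)}_a$, only few of its admissible (``good'') children were already claimed — this is where the choice of $\phi$ (forcing earlier clusters to lie ``below'' $\ux_a$, hence to claim the wrong children) and the precise shape of the bound $\tfrac{1}{2^{\ell+1}(\ell+1)!}\binom{s}{\ell+2}$ on $|F|$ must be exploited, presumably via a global count over $E^{(j)}$ rather than cluster by cluster. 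Once each $|F^{(j)}_a|\ge\tfrac{s-\ell}{2(\ell+1)}|E^{(j)}_a|$ is secured, one collects all the clusters $E_i$ from the three kinds of components into the partition $E=E_1\amalg\cdots\amalg E_r$; the associated $F_i$ are pairwise disjoint subsets of $F$ (set $F_{r+1}=F\setminus(F_1\cup\cdots\cup F_r)$), each satisfies $E_i\subseteq C_\ell(\ux_i)$, $F_i\subseteq\cO(E_i)$ and $|F_i|\ge\tfrac{s-\ell}{2(\ell+1)}|E_i|$, and summing over $i$ recovers \eqref{propC2:eq2}.
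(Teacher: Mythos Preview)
Your proposal is incomplete: you yourself flag the overlap bound $\bigl|\cO(E^{(j)}_a)\cap(F^{(j)}_1\cup\cdots\cup F^{(j)}_{a-1})\bigr|\le\tfrac12|\cO(E^{(j)}_a)|$ as the ``main obstacle'' and do not prove it. The graph $G$ and the linear functional $\phi$ do not obviously help here, and I do not see how to close this gap along the lines you suggest. The difficulty is structural: you insist on clusters of \emph{fixed} radius $\ell$, and at that radius the overlap with earlier clusters can genuinely be large.

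The paper's argument avoids this by allowing each cluster its own radius $k\le\ell$, and by inducting on $|E|$ rather than on $\ell$. Your counting lemma is exactly the paper's Lemma~\ref{inter:lemmaC1}; the missing ingredient is its companion overlap estimate (Lemma~\ref{inter:lemmaC2}(ii)): for any $\ux$ and any $k$, the overlap $|D_k(\ux,E)\cap\cO(E\setminus C_k(\ux,E))|$ is at most $(k+1)\,|C_{k+1}(\ux,E)|$. With this in hand one shows (Lemma~\ref{inter:lemmaC3}) that for every $\ux\in E$ there is \emph{some} $k\in\{0,\dots,\ell\}$ for which the overlap is at most $\tfrac{s-k}{2(k+1)}|C_k(\ux,E)|$: otherwise the inequalities $(k+1)|C_{k+1}|>\tfrac{s-k}{2(k+1)}|C_k|$ hold for all $k\le\ell$, and telescoping them from $|C_0|=1$ forces $|F|\ge|D_{\ell+1}(\ux,E)|>\tfrac{1}{2^{\ell+1}(\ell+1)!}\binom{s}{\ell+2}$, contradicting \eqref{propC2:eq1}. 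This is precisely where the specific shape of the bound on $|F|$ enters. One then picks any $\ux_1\in E$, takes $E_1=C_k(\ux_1,E)$ and $F_1=\cO(E_1)\setminus\cO(E\setminus E_1)$ for that good $k$; the counting lemma plus the overlap bound give $|F_1|\ge\tfrac{s-k}{2(k+1)}|E_1|\ge\tfrac{s-\ell}{2(\ell+1)}|E_1|$, and by construction $\cO(E\setminus E_1)\subseteq F\setminus F_1$, so induction on $|E|$ finishes. No graph, no ordering, no component analysis is needed.
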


The proof of this result requires three lemmas.

\begin{lemma}
 \label{inter:lemmaC1}
Let $k\ge 0$ be an integer, let $C$ be a subset of $C_k$ and let
$D=\cO(C)$.  Then, we have $|D| \ge ((s-k)/(k+1)) |C|$.
\end{lemma}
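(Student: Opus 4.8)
The plan is to reduce the statement to a bipartite counting argument. I work inside the affine lattice $\Zs$, viewing $C\subseteq C_k$ and $D=\cO(C)=\bigcup_{i=1}^s(C+\ue_i)$. The key observation is that each point of $C$ has at most $k$ negative coordinates, since $C\subseteq C_k$ and every element of $C_k$ lies in $U$ with $\ell_1$-norm at most $2k$; hence for any $\ux\in C$, among the $s$ shifts $\ux+\ue_1,\dots,\ux+\ue_s$, at least $s-k$ of them have the property that the $i$-th coordinate of $\ux+\ue_i$ is non-negative (namely all indices $i$ for which $x_i\ge 0$). Actually, to make the counting balance I will instead mark, for each $\ux\in C$, a set of at least $s-k$ ``good'' shifts; for the bound $|D|\ge((s-k)/(k+1))|C|$ I need that no point of $D$ is a good shift of more than $k+1$ distinct points of $C$.

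So the main steps are: (1) For $\ux\in C$, declare the shift $\ux+\ue_i$ \emph{admissible} if $x_i\ge 0$; since $\ux$ has at most $k$ strictly negative coordinates, there are at least $s-k$ admissible shifts of $\ux$. (2) Count the set $S=\{(\ux,i)\,;\,\ux\in C,\ \ux+\ue_i \text{ admissible}\}$; from (1), $|S|\ge(s-k)|C|$. (3) Bound the number of admissible preimages of a fixed point $\uy\in D$: if $\uy=\ux+\ue_i$ is an admissible shift, then $\uy-\ue_i=\ux\in C_k$, and the relevant constraint forces $\uy$ to have only boundedly many such representations. Concretely, if $\uy=\ux+\ue_i=\ux'+\ue_{i'}$ with both admissible and $\ux\ne\ux'$, then $\ux-\ux'=\ue_{i'}-\ue_i$, so $\ux$ and $\ux'$ differ only in coordinates $i,i'$, with $\ux$ having the larger $i'$-entry; chaining this, all admissible preimages of $\uy$ differ from one another only by swapping a unit between two coordinates, and admissibility ($x_i\ge 0$ at the incremented slot, equivalently $y_i\ge 1$) together with $\ux\in U$ and $\|\ux\|_1\le 2k$ pins down the index set to at most $k+1$ choices — the incremented coordinate must be one where $\uy$ has a positive entry, and $\uy$ (being close to $C_k$) has at most $k+1$ positive entries, one of which must be where the unit was added. (4) Therefore $|S|\le(k+1)|D|$, and combining with step (2) gives $(s-k)|C|\le|S|\le(k+1)|D|$, i.e.\ $|D|\ge((s-k)/(k+1))|C|$.

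The step I expect to be the main obstacle is step (3): getting the clean bound $k+1$ on the number of admissible preimages of a point $\uy\in D$. The naive bound is $s$ (one preimage per coordinate), which is useless; the gain comes entirely from exploiting that preimages lie in $C_k\subseteq U$ with small $\ell_1$-norm, so $\uy$ itself has $\|\uy\|_1\le 2k+1$ and lies within distance one of $U$, forcing $\uy$ to have at most $k+1$ positive coordinates. Since an admissible representation $\uy=\ux+\ue_i$ requires $y_i=x_i+1\ge 1$, the index $i$ must be one of those at most $k+1$ positive coordinates of $\uy$, which is exactly the desired bound. I will need to check the edge behaviour (points of $D$ not of the form $\ux+\ue_i$ for admissible $i$ simply contribute nothing to $S$, which is fine) and the degenerate case $|C|\le 1$, where the inequality is trivial.
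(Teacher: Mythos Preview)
Your proposal is correct and follows essentially the same approach as the paper. The paper partitions $C\times\{1,\dots,s\}$ into $N$ (where $x_i<0$) and $P$ (where $x_i\ge 0$, your ``admissible'' pairs), bounds $|N|\le k|C|$ so that $|P|\ge (s-k)|C|$, and then bounds $|P|\le (k+1)|D|$ by exactly your argument that any $\uy\in D\subseteq\cO(C_k)$ has at most $k+1$ positive coordinates and an admissible representation $\uy=\ux+\ue_i$ forces $y_i\ge 1$.
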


Note that, since $\ue_1+C \subseteq D$, we also have $|D| \ge |C|$.
Therefore, the conclusion of the lemma is interesting only when
$k<s/2$.

\begin{proof}
For each point $(\ux,i)\in C\times\uns$, we have either
$\|\ux+\ue_i\|_1 = \|\ux\|_1-1$ or $\|\ux+\ue_i\|_1 = \|\ux\|_1+1$.
Denote by $N$ the set of points $(\ux,i)$ in $C\times\uns$ which
satisfy the first condition, and by $P$ the set of those which
satisfy the second condition.  Since $N$ and $P$ form a partition of
$C\times \uns$, we have
\begin{equation}
 \label{lemmaC1:eq1}
 |N|+|P| = s |C|.
\end{equation}

For any fixed $\ux\in C$, the integers $i\in\uns$ such that
$(\ux,i)\in N$ are those for which the $i$-th coordinate of $\ux$ is
negative. Since $C\subseteq C_k$, such a point $\ux$ has at most $k$
negative coordinates and therefore there are at most $k$ values of
$i$ for which $(\ux,i)\in N$.  As this holds for any $\ux\in C$, we
deduce that
\begin{equation}
 \label{lemmaC1:eq2}
 |N| \le k |C|.
\end{equation}

Consider now the surjective map $\varphi\colon C\times \uns \to D$
given by $\varphi(\ux,i)=\ux+\ue_i$.  For any $(\ux,i)\in P$, the
$i$-th coordinate of $\varphi(\ux,i)$ is positive.  Since
$D\subseteq \cO(C_k)$, any point $\uy\in D$ has at most $k+1$
positive coordinates, and therefore we get $|P \cap
\varphi^{-1}(\uy)| \le k+1$ for each $\uy\in D$.  The surjectivity
of $\varphi$ then implies
\begin{equation}
 \label{lemmaC1:eq3}
 |P| = \sum_{\uy\in D} |P \cap \varphi^{-1}(\uy)| \le (k+1) |D|.
\end{equation}
The combination of \eqref{lemmaC1:eq1}, \eqref{lemmaC1:eq2} and
\eqref{lemmaC1:eq3} gives $(k+1)|D| \ge |P| = s|C|-|N| \ge
(s-k)|C|$, as announced.
\end{proof}

For any integer $k\ge 0$, any point $\ux\in\Zs$ and any subset $E$
of $\Zs$, we define
\[
 C_k(\ux,E) = C_k(\ux) \cap E
 \et
 D_k(\ux,E) = \cO(C_k(\ux,E)).
\]
With this notation, if a set $F$ contains $\cO(E)$, then it contains
$D_k(\ux,E)$ for any $k\ge 0$ and any $\ux\in\Zs$.  We can now state
the next lemma.

\begin{lemma}
 \label{inter:lemmaC2}
Let $E$ be a finite subset of $\Zs$.  For any integer $k\ge 0$ and
any point $\ux\in\Zs$, we have
\begin{itemize}
 \item[(i)] $\disp |D_k(\ux,E)| \ge \frac{s-k}{k+1} |C_k(\ux,E)|$,
 \\
 \item[(ii)] $\disp |D_k(\ux,E) \cap \cO(E\setminus C_k(\ux,E))|
             \le (k+1) |C_{k+1}(\ux,E)|$.
\end{itemize}
\end{lemma}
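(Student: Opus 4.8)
The plan is to reduce (i) directly to Lemma~\ref{inter:lemmaC1} by a translation, and to prove (ii) through a double counting over triples $(\uy,\uw,j)$. For (i), I would set $C=C_k(\ux,E)-\ux$. Since $C_k(\ux,E)=(\ux+C_k)\cap E$ we have $C\subseteq C_k$; and because the operator $\cO$ is just the Minkowski sum with $\{\ue_1,\dots,\ue_s\}$, it commutes with translations of $\Zs$, so $D_k(\ux,E)=\cO(\ux+C)=\ux+\cO(C)$. Hence $|D_k(\ux,E)|=|\cO(C)|$ and $|C_k(\ux,E)|=|C|$, and Lemma~\ref{inter:lemmaC1} applied to $C\subseteq C_k$ gives $|\cO(C)|\ge\frac{s-k}{k+1}|C|$, which is exactly (i).

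For (ii), abbreviate $I=D_k(\ux,E)\cap\cO(E\setminus C_k(\ux,E))$ and let $T$ be the set of triples $(\uy,\uw,j)$ with $\uy\in I$, $\uw\in E\setminus C_k(\ux,E)$, $j\in\uns$ and $\uy=\uw+\ue_j$. Every $\uy\in I$ lies in $\cO(E\setminus C_k(\ux,E))$, hence occurs in at least one triple, so $|I|\le|T|$; counting $T$ instead by its middle entry, $|T|=\sum_{\uw\in E\setminus C_k(\ux,E)}\#\{\,j\in\uns:\uw+\ue_j\in I\,\}$. The crucial claim is: \emph{if $\uw\in E\setminus C_k(\ux,E)$ and $\uw+\ue_j\in I$ for some $j$, then $\uw\in C_{k+1}(\ux,E)$ and the $j$-th coordinate of $\uw-\ux$ is negative}. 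Granting the claim, only $\uw\in C_{k+1}(\ux,E)$ contribute to the sum, and for such $\uw$ the admissible $j$'s index negative coordinates of $\uw-\ux$; since $\uw-\ux\in C_{k+1}\subseteq U$, the sum of the absolute values of its negative coordinates equals $\|\uw-\ux\|_1/2\le k+1$, so there are at most $k+1$ of them, whence $|T|\le(k+1)|C_{k+1}(\ux,E)|$, proving (ii).

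To establish the claim, from $\uw+\ue_j\in I\subseteq D_k(\ux,E)=\cO(C_k(\ux,E))$ write $\uw+\ue_j=\uz+\ue_m$ with $\uz\in C_k(\ux,E)$ and $m\in\uns$. If $m=j$ then $\uw=\uz\in C_k(\ux,E)$, a contradiction; so $m\neq j$, and $\uz-\ux=(\uw-\ux)+\ue_j-\ue_m$ lies in $C_k$. Put $\uu=\uw-\ux$; then $\uu\in U$ (since $\uz-\ux\in C_k\subseteq U$ and $\ue_j-\ue_m\in U$), so $\|\uu\|_1$ is even, while $\uw\notin C_k(\ux)$ forces $\|\uu\|_1>2k$. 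A direct coordinate computation gives
\[
 \|\uu+\ue_j-\ue_m\|_1=\|\uu\|_1+\varepsilon_j+\varepsilon_m,
\]
where $\varepsilon_j=+1$ unless the $j$-th coordinate $u_j$ of $\uu$ is negative, in which case $\varepsilon_j=-1$, and $\varepsilon_m=+1$ unless the $m$-th coordinate $u_m$ is positive, in which case $\varepsilon_m=-1$. Since $\|\uu+\ue_j-\ue_m\|_1=\|\uz-\ux\|_1\le 2k$ and $\|\uu\|_1$ is an even integer $>2k$, we must have $\|\uu\|_1=2(k+1)$ and $\varepsilon_j=\varepsilon_m=-1$; in particular $u_j<0$ and $\uu\in C_{k+1}$, i.e.\ $\uw\in C_{k+1}(\ux)\cap E=C_{k+1}(\ux,E)$, as claimed.

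The main obstacle is precisely this last norm bookkeeping: one must observe that the parity of $\|\uu\|_1$ (a consequence of $\uu\in U$) together with $\|\uu\|_1>2k$ pins $\|\uu\|_1$ down to exactly $2(k+1)$, forcing \emph{both} perturbations $+\ue_j$ and $-\ue_m$ to shorten $\uu$; this is what yields $u_j<0$ and the factor $k+1$, rather than $2(k+1)$, in the final bound. Everything else (the Minkowski-sum description of $\cO$, and the routine triangle-inequality computation of $\|\uu+\ue_j-\ue_m\|_1$) is bookkeeping.
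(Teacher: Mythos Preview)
Your proof is correct and follows essentially the same approach as the paper: part (i) is reduced to Lemma~\ref{inter:lemmaC1} via translation by $-\ux$, and part (ii) is obtained by showing that any $\uw\in E\setminus C_k(\ux,E)$ with $\cO(\uw)\cap D_k(\ux,E)\neq\emptyset$ must lie in $C_{k+1}(\ux,E)$ and can contribute only through indices $j$ for which the $j$-th coordinate of $\uw-\ux$ is negative. Your presentation via the set of triples $T$ and the explicit $\varepsilon_j,\varepsilon_m$ bookkeeping is slightly more formal than the paper's, but the argument is the same.
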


\begin{proof}
Fix a choice of $k$ and $\ux$, and put $C = C_k(\ux,E)-\ux$ and $D =
D_k(\ux,E)-\ux$.  Then, $C$ and $D$ are subsets of $\Zs$ with the
same cardinality as $C_k(\ux,E)$ and $D_k(\ux,E)$ respectively.
Since they satisfy the hypotheses $C\subseteq C_k$ and $D=\cO(C)$ of
Lemma \ref{inter:lemmaC1}, the inequality (i) follows directly from
this lemma.

To prove (ii), it suffices to show that, for any $\uy\in E\setminus
C_k(\ux,E)$ such that $D_k(\ux,E)\cap \cO(\uy) \neq \emptyset$, we
have $\uy\in C_{k+1}(\ux,E)$ and $|D_k(\ux,E) \cap \cO(\uy)| \le
k+1$.  Fix such a point $\uy$, assuming that there exists at least
one.  Since $D_k(\ux,E)\cap \cO(\uy) \neq \emptyset$, there is an
integer $i\in\uns$ such that $\uy+\ue_i \in D_k(\ux,E)$.  For this
choice of $i$, there is also a point $\uz\in C_k(\ux,E)$ and an
integer $j\in\uns$ such that $\uy+\ue_i=\uz+\ue_j$.  Rewriting this
equality in the form
\begin{equation}
 \label{lemmaC2:eq1}
 \uy-\ux = (\uz-\ux) + (\ue_j-\ue_i),
\end{equation}
we deduce that $\|\uy-\ux\|_1 \le \|\uz-\ux\|_1 + 2 \le 2(k+1)$ and
also that $\uy-\ux\in U$ since $U$ contains both $\uz-\ux$ and
$\ue_j-\ue_i$.  Since $\uy\in E$, this shows that $\uy\in
C_{k+1}(\ux,E)$.  Moreover, since $\uy\notin C_k(\ux,E)$, we also
have $\|\uy-\ux\|_1 > 2k$ and so $\|\uy-\ux\|_1 = 2k+2$, because
$\|\uy-\ux\|_1$ is an even integer.  This observation combined with
\eqref{lemmaC2:eq1} and the fact that $\|\uz-\ux\|_1 \le 2k$ tells
us that the $i$-th coordinate of $\uy-\ux$ is negative.  As
$\uy-\ux$ admits at most $k+1$ negative coordinates, we deduce that
there are at most $k+1$ values of $i$ such that $\uy+\ue_i \in
D_k(\ux,E)$, and so $|D_k(\ux,E) \cap \cO(\uy)| \le k+1$.
\end{proof}

\begin{lemma}
 \label{inter:lemmaC3}
Let $E$, $F$ and $\ell$ be as in the statement of Proposition
\ref{inter:propC2}. For each $\ux\in E$, there exists at least one
integer $k$ with $0\le k \le \ell$ such that
\begin{equation}
 \label{lemmeC3:eq1}
 |D_k(\ux,E) \cap \cO(E\setminus C_k(\ux,E))|
 \le
 \frac{s-k}{2(k+1)} |C_k(\ux,E)|.
\end{equation}
\end{lemma}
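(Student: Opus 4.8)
The plan is to prove Lemma \ref{inter:lemmaC3} by a counting argument played out over the chain of nested sets $C_0(\ux,E)\subseteq C_1(\ux,E)\subseteq\cdots\subseteq C_\ell(\ux,E)$, using Lemma \ref{inter:lemmaC2}(ii) as the starting point. Suppose, for contradiction, that for the given $\ux\in E$ the reverse inequality
\[
 |D_k(\ux,E) \cap \cO(E\setminus C_k(\ux,E))|
 >
 \frac{s-k}{2(k+1)} |C_k(\ux,E)|
\]
holds for every $k$ with $0\le k\le\ell$. Combining this with part (ii) of Lemma \ref{inter:lemmaC2}, which bounds the left-hand side above by $(k+1)|C_{k+1}(\ux,E)|$, I obtain
\[
 (k+1)|C_{k+1}(\ux,E)| > \frac{s-k}{2(k+1)}|C_k(\ux,E)|,
\]
i.e. $|C_{k+1}(\ux,E)| > \frac{s-k}{2(k+1)^2}|C_k(\ux,E)|$ for each $0\le k\le\ell$. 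Since $\ux\in E$ and $C_0=\{\mathbf{0}\}$, we have $|C_0(\ux,E)|\ge 1$, so iterating this bound from $k=0$ up to $k=\ell-1$ (and using it once more with $k=\ell$ if needed) gives
\[
 |C_{\ell+1}(\ux,E)| > |C_\ell(\ux,E)| > \prod_{k=0}^{\ell-1}\frac{s-k}{2(k+1)^2}.
\]

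The next step is to simplify that product into something recognizable. Writing $\prod_{k=0}^{\ell-1}(s-k) = s(s-1)\cdots(s-\ell+1)$ and $\prod_{k=0}^{\ell-1}(k+1)^2 = (\ell!)^2$, and being a little careful to carry the extra factor coming from the index $k=\ell$ (so as to bring in $(s-\ell)$ and get a genuine binomial coefficient $\binom{s}{\ell+2}$ up to elementary constants), I expect to reach a lower bound of the shape
\[
 |C_{\ell+1}(\ux,E)| > \frac{1}{2^{\ell+1}(\ell+1)!}\binom{s}{\ell+2}.
\]
Since $C_{\ell+1}(\ux,E)\subseteq E$ and $E\subseteq\cO(E)\subseteq F$ (here $\ux\in E$ guarantees $C_{\ell+1}(\ux,E)$ is nonempty and contained in $E$, and any element of $E$ lies in $F$ because $\cO(E)\subseteq F$ forces $E\subseteq F$ once $E\ne\emptyset$ — indeed each $\ux\in E$ satisfies $\ux+\ue_1\in F$, but more to the point we only need $|C_{\ell+1}(\ux,E)|\le|F|$, which follows from $C_{\ell+1}(\ux,E)\subseteq E$ together with the inclusion $E\subseteq F$ obtained from $\cO(E)\subseteq F$ via the trivial observation that every point of $\cO(E)$ already exhausts... ) — in any case $|C_{\ell+1}(\ux,E)|\le|F|$, this contradicts the hypothesis \eqref{propC2:eq1} that $|F|\le\frac{1}{2^{\ell+1}(\ell+1)!}\binom{s}{\ell+2}$. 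Hence the assumed inequality must fail for at least one $k\in\{0,\dots,\ell\}$, which is exactly \eqref{lemmeC3:eq1}.

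The only genuinely delicate point is the bookkeeping in the iteration: one must track precisely how many times the recursion $|C_{k+1}|>\frac{s-k}{2(k+1)^2}|C_k|$ is applied and from which starting index, so that the telescoping product matches the constant $\frac{1}{2^{\ell+1}(\ell+1)!}\binom{s}{\ell+2}$ appearing in \eqref{propC2:eq1} — in particular getting the exponent of $2$ to be $\ell+1$ rather than $\ell$, and converting $s(s-1)\cdots(s-\ell)/((\ell+1)!\,(\ell!)^{?})$ cleanly into a binomial coefficient times a power of two. I also need to make sure the final comparison uses the correct containment $C_{\ell+1}(\ux,E)\subseteq E\subseteq F$; here the constraint $\ell\le s-2$ ensures $\binom{s}{\ell+2}$ is a legitimate nonzero binomial coefficient, and the fact that $\cO(E)\subseteq F$ implies $|E|\le|F|$ (since, e.g., $\uy\mapsto\uy+\ue_1$ injects $E$ into $F$) closes the loop. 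Everything else is elementary: the chain of inclusions $C_k(\ux,E)\subseteq C_{k+1}(\ux,E)$ is immediate from $C_k\subseteq C_{k+1}$, and the two inputs from Lemma \ref{inter:lemmaC2} are quoted verbatim.
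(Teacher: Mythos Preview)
Your overall strategy---assume the inequality fails for every $k$, feed the failure into Lemma~\ref{inter:lemmaC2}(ii), and telescope---is exactly the paper's approach. But the bookkeeping you flag as ``delicate'' actually breaks: iterating
\[
 |C_{k+1}(\ux,E)| > \frac{s-k}{2(k+1)^2}\,|C_k(\ux,E)|
 \qquad (k=0,\dots,\ell)
\]
from $|C_0(\ux,E)|=1$ gives
\[
 |C_{\ell+1}(\ux,E)|
 > \prod_{k=0}^{\ell}\frac{s-k}{2(k+1)^2}
 = \frac{s(s-1)\cdots(s-\ell)}{2^{\ell+1}\,((\ell+1)!)^2}
 = \frac{1}{2^{\ell+1}(\ell+1)!}\binom{s}{\ell+1},
\]
not $\binom{s}{\ell+2}$. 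Your subsequent comparison $|C_{\ell+1}(\ux,E)|\le |E|\le |F|$ (the injection $\uy\mapsto\uy+\ue_1$ is fine for this; your aside ``$E\subseteq\cO(E)$'' is false, but you don't need it) then yields only
\[
 |F| > \frac{1}{2^{\ell+1}(\ell+1)!}\binom{s}{\ell+1}.
\]
This does \emph{not} contradict \eqref{propC2:eq1} in the regime that matters, namely when $\ell$ is small compared to $s$: for $\ell+2\le s-\ell-1$ one has $\binom{s}{\ell+1}<\binom{s}{\ell+2}$, so the hypothesis $|F|\le \frac{1}{2^{\ell+1}(\ell+1)!}\binom{s}{\ell+2}$ survives.

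The missing idea is one more application of Lemma~\ref{inter:lemmaC2}, this time part~(i) at level $\ell+1$: since $D_{\ell+1}(\ux,E)=\cO(C_{\ell+1}(\ux,E))\subseteq\cO(E)\subseteq F$, you get
\[
 |F| \ge |D_{\ell+1}(\ux,E)|
      \ge \frac{s-\ell-1}{\ell+2}\,|C_{\ell+1}(\ux,E)|
      > \frac{s-\ell-1}{\ell+2}\cdot\frac{1}{2^{\ell+1}(\ell+1)!}\binom{s}{\ell+1}
      = \frac{1}{2^{\ell+1}(\ell+1)!}\binom{s}{\ell+2},
\]
and the factor $\frac{s-\ell-1}{\ell+2}$ is precisely what upgrades $\binom{s}{\ell+1}$ to $\binom{s}{\ell+2}$. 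That is the paper's proof.
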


\begin{proof}
Suppose on the contrary that there exists $\ux\in E$ such that
\eqref{lemmeC3:eq1} does not hold for any $k$ with $0\le k\le \ell$.
Using Part (ii) of Lemma \ref{inter:lemmaC2}, this gives
\begin{equation*}
 2 (k+1) |C_{k+1}(\ux,E)|
   \ge 2 |D_k(\ux,E) \cap \cO(E\setminus C_k(\ux,E))|
   > \frac{s-k}{k+1} | C_k(\ux,E)|,
\end{equation*}
for $k=0,\dots,\ell$.  Multiplying these inequalities term by term
for all these values of $k$ and noting that $C_0(\ux,E)$ is the
singleton $\{\ux\}$, we deduce that
\[
 2^{\ell+1}(\ell+1)! |C_{\ell+1}(\ux,E)| > \binom{s}{\ell+1}.
\]
Since $F$ contains $D_{\ell+1}(\ux,E)$, the above estimate combined
with Lemma \ref{inter:lemmaC2} (i) leads to
\[
 |F| \ge |D_{\ell+1}(\ux,E)|
     \ge \frac{s-\ell-1}{\ell+2} |C_{\ell+1}(\ux,E)|
       > \frac{1}{2^{\ell+1}(\ell+1)!} \binom{s}{\ell+2},
\]
against the hypothesis \eqref{propC2:eq1} of Proposition
\ref{inter:propC2}.
\end{proof}

\begin{proof}[Proof of Proposition \ref{inter:propC2}]
Since the inequality \eqref{propC2:eq2} from the first assertion of
the proposition follows from the estimates c) of the second
assertion, it suffices to prove the latter.  To do so we proceed by
induction on $|E|$.  Let $\ux_1\in E$.  Lemma \ref{inter:lemmaC3}
combined with Part (i) of Lemma \ref{inter:lemmaC2} shows that there
exists an integer $k$ with $0\le k\le \ell$ such that, upon putting
$E_1 = C_k(\ux_1,E)$ and $F_1 = D_k(\ux_1,E) \setminus
\cO(E\setminus E_1)$, we have
\begin{align*}
 |F_1|
  &= |D_k(\ux_1,E)| - |D_k(\ux_1,E) \cap \cO(E\setminus E_1)| \\
  &\ge \frac{s-k}{k+1} |C_k(\ux_1,E)| - \frac{s-k}{2(k+1)}
       |C_k(\ux_1,E)|
   = \frac{s-k}{2(k+1)} |E_1| \ge \frac{s-\ell}{2(\ell+1)} |E_1|.
\end{align*}
Therefore the sets $E_1$ and $F_1$ fulfil the conditions a), b) and
c) of Proposition \ref{inter:propC2} for $i=1$.

If $E=E_1$, this proves the proposition with $r=1$ and
$F_2=F\setminus F_1$. In particular, the proposition is verified
when $|E|=1$.  Assume therefore that $E\neq E_1$.  We put $E' =
E\setminus E_1$ and $F' = F\setminus F_1$.  By construction, $F_1$
and $\cO(E')$ are disjoint sets.  Since $\cO(E')\subseteq \cO(E)
\subseteq F$, this implies that $\cO(E')\subseteq F'$.  Thus the
hypotheses of Proposition \ref{inter:propC2} are also satisfied by
$E'$ and $F'$ instead of $E$ and $F$, with the same value of $\ell$.
Since $|E'| < |E|$, we may assume by induction that there exists an
integer $r\ge 2$, a sequence of points $\ux_2,\dots,\ux_r$ of $E'$
and partitions $E' = E_2 \amalg\cdots\amalg E_r$ and $F' = F_2
\amalg\cdots\amalg F_{r+1}$ which fulfil the conditions a), b), c)
of the proposition for $i=2,\dots,r$.  Then the partitions $E = E_1
\amalg\cdots\amalg E_r$ and $F = F_1 \amalg\cdots\amalg F_{r+1}$
have all the required properties.
\end{proof}

%
%

\section{Estimates for the gcd}
\label{sec:gcd}

We say that a finite subset $A$ of $\Qplus$ with $s$ elements is
multiplicatively independent if it generates a free subgroup of
$\Qplus$ of rank $s$.  This happens for example when $A$ consists of
$s$ prime numbers. The main result of this section is the following
statement which immediately implies Theorem \ref{intro:thm:gcd}.

\begin{theorem}
 \label{gcd:thmG}
Let $A$ be a finite multiplicatively independent subset of $\Qplus$,
let $s$ be its cardinality, let $P$ be a polynomial of $\QT$ with
$P(0)\neq 0$, and let $Q = \gcd\{P(aT)\,;\, a\in A\}$. Suppose that
the number of non-associate irreducible factors of $P$ is at most
\[
 N(s,\ell) := \binom{s}{\ell+2} \frac{1}{2^{\ell+1} (\ell+1)!}
\]
for some integer $\ell$ with $0\le \ell \le s-2$. Then, we have
\begin{equation}
 \label{thmG:eq1}
 \deg(Q) \le \frac{2(\ell+1)}{s-\ell} \deg(P)
 \et
 \log H(Q) \le \frac{2(\ell+1)}{s-\ell} \big( \log H(P) + c_1 \deg(P) \big)
\end{equation}
where $c_1 = 8 + (4\ell+1)\log(c_A)$ and $c_A = \max_{a\in A} H(a)$.
\end{theorem}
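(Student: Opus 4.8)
The plan is to repackage the irreducible factors of $P$ into orbits under the action of $\Zs$ determined by $A=\{a_1,\dots,a_s\}$, and then to recognise the gcd relation as the hypothesis of Proposition \ref{inter:propC2}. For $\ux=(x_1,\dots,x_s)\in\Zs$ write $a^\ux=a_1^{x_1}\cdots a_s^{x_s}$ and $\lambda_\ux=\lambda_{a^\ux,0}$, so that $\lambda_\ux\lambda_\uy=\lambda_{\ux+\uy}$ and each $\lambda_\ux$ carries irreducible polynomials to irreducible polynomials. Because $P(0)\neq0$, every irreducible factor $R$ of $P$ has $R(0)\neq0$, hence is not a rational multiple of $cT$; by Lemma \ref{trans:lemmaH2}, $\lambda_\ux R$ is then associate to $R$ only if $a^\ux=1$, i.e.\ (multiplicative independence of $A$) only if $\ux=0$. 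So the associate classes of irreducible factors of $P$ decompose into finitely many $\Zs$-orbits on each of which the map $\ux\mapsto[\lambda_\ux R]$ is injective. For each orbit $\Omega$ fix a representative $R_\Omega$ dividing $P$, put $d_\Omega=\deg R_\Omega$, and let $\mu_\Omega(\uz)$ be the multiplicity of $\lambda_\uz R_\Omega$ in $P$; then $F_\Omega=\{\uz\in\Zs:\mu_\Omega(\uz)>0\}$ is finite, and $\sum_\Omega|F_\Omega|$ is precisely the number of non-associate irreducible factors of $P$, so $|F_\Omega|\le N(s,\ell)$ for every $\Omega$. Since $\lambda_\ux R_\Omega$ divides $P(a_iT)=\lambda_{\ue_i}P$ with multiplicity $\mu_\Omega(\ux-\ue_i)$, the multiplicity $\nu_\Omega(\ux)$ of $\lambda_\ux R_\Omega$ in $Q=\gcd\{P(a_iT)\}$ equals $\min_{1\le i\le s}\mu_\Omega(\ux-\ue_i)$. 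Replacing $\cO(\ux)$ by $\{\ux-\ue_1,\dots,\ux-\ue_s\}$ throughout \S\ref{sec:inter} — which changes nothing, since $U$ and all the $C_k$ are invariant under $\ux\mapsto-\ux$ — the level sets $E_\Omega^{(t)}=\{\ux:\nu_\Omega(\ux)\ge t\}$ and $F_\Omega^{(t)}=\{\uz:\mu_\Omega(\uz)\ge t\}$ then satisfy $\cO(E_\Omega^{(t)})\subseteq F_\Omega^{(t)}$ and $|F_\Omega^{(t)}|\le|F_\Omega|\le N(s,\ell)$ for every integer $t\ge1$.

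For the degree estimate I would apply Proposition \ref{inter:propC2} to each pair $(E_\Omega^{(t)},F_\Omega^{(t)})$, obtaining $|E_\Omega^{(t)}|\le\frac{2(\ell+1)}{s-\ell}|F_\Omega^{(t)}|$, and then sum over $t\ge1$, using $\sum_{t\ge1}|E_\Omega^{(t)}|=\sum_\ux\nu_\Omega(\ux)$ and the analogous identity for $\mu_\Omega$. Multiplying by $d_\Omega$ and summing over orbits gives $\deg Q=\sum_\Omega d_\Omega\sum_\ux\nu_\Omega(\ux)\le\frac{2(\ell+1)}{s-\ell}\sum_\Omega d_\Omega\sum_\uz\mu_\Omega(\uz)=\frac{2(\ell+1)}{s-\ell}\deg P$.

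For the height estimate I would use the finer conclusion of Proposition \ref{inter:propC2}, which for each $\Omega$ and each $t$ splits $E_\Omega^{(t)}$ into clusters $E_{\Omega,i}^{(t)}\subseteq C_\ell(\ux_{\Omega,i}^{(t)})$ and $F_\Omega^{(t)}$ into sets $F_{\Omega,i}^{(t)}\subseteq\cO(E_{\Omega,i}^{(t)})$ (plus a leftover piece) with $|F_{\Omega,i}^{(t)}|\ge\frac{s-\ell}{2(\ell+1)}|E_{\Omega,i}^{(t)}|$. Writing $R^*=\lambda_{\ux_{\Omega,i}^{(t)}}R_\Omega$, every $\lambda_\ux R_\Omega$ with $\ux\in E_{\Omega,i}^{(t)}$ equals $\lambda_\uy R^*$ for some $\uy$ with $\|\uy\|_1\le2\ell$, and every $\lambda_\uz R_\Omega$ with $\uz\in F_{\Omega,i}^{(t)}$ equals $\lambda_{\uy\pm\ue_j}R^*$ for such a $\uy$; since $H(\lambda_{a^\uw,0})=H(a^\uw)\le c_A^{\|\uw\|_1}$, Lemma \ref{trans:lemmaH1} shows that the heights of $\lambda_\ux R_\Omega$ and $\lambda_\uz R_\Omega$ lie within the factors $(3c_A^{2\ell})^{d_\Omega}$ and $(3c_A^{2\ell+1})^{d_\Omega}$ of $H(R^*)$. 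Now I would bound $H(Q)$ by $e^{\deg Q}$ times the product of the heights of its irreducible factors counted with multiplicity, i.e.\ by $e^{\deg Q}\prod_{\Omega,t,i}\prod_{\ux\in E_{\Omega,i}^{(t)}}H(\lambda_\ux R_\Omega)$; estimate each inner product by $(3c_A^{2\ell})^{d_\Omega|E_{\Omega,i}^{(t)}|}H(R^*)^{|E_{\Omega,i}^{(t)}|}$; use $|E_{\Omega,i}^{(t)}|\le\frac{2(\ell+1)}{s-\ell}|F_{\Omega,i}^{(t)}|$ and $H(R^*)\ge1$ to pass to $H(R^*)^{(2(\ell+1)/(s-\ell))|F_{\Omega,i}^{(t)}|}$; and bound $H(R^*)^{|F_{\Omega,i}^{(t)}|}\le(3c_A^{2\ell+1})^{d_\Omega|F_{\Omega,i}^{(t)}|}\prod_{\uz\in F_{\Omega,i}^{(t)}}H(\lambda_\uz R_\Omega)$. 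Reassembling with the degree identities $\sum_{\Omega,t,i}d_\Omega|E_{\Omega,i}^{(t)}|\le\deg Q$ and $\sum_{\Omega,t,i}d_\Omega|F_{\Omega,i}^{(t)}|\le\deg P$, the multiplicativity bound $\prod_\Omega\prod_\uz H(\lambda_\uz R_\Omega)^{\mu_\Omega(\uz)}\le e^{\deg P}H(P)$, and the degree estimate $\deg Q\le\frac{2(\ell+1)}{s-\ell}\deg P$ (used to absorb the front factor $e^{\deg Q}$ and the $E$-side contribution), I get $\log H(Q)\le\frac{2(\ell+1)}{s-\ell}\big(\log H(P)+(2+2\log3+(4\ell+1)\log c_A)\deg P\big)$, which is stronger than \eqref{thmG:eq1} because $2+2\log3<8$.

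I expect the main work to lie in two places. The conceptual step is the passage from the gcd to the hypothesis $\cO(E)\subseteq F$: it rests on the freeness of the $\Zs$-action on factors, which is exactly where $P(0)\neq0$ and the multiplicative independence of $A$ enter (via Lemma \ref{trans:lemmaH2}), and it is what turns ``few irreducible factors'' into ``$|F|$ small''. The technical step is the height bookkeeping: multiplicities must be handled — the device is to apply Proposition \ref{inter:propC2} separately to each level set $\{\nu_\Omega\ge t\}$, which inherits $\cO(E^{(t)})\subseteq F^{(t)}$ — and the estimates of Lemmas \ref{trans:lemmaH1} and \ref{trans:lemmaH2} must be propagated through the bounded-diameter clusters while keeping the constant $c_1$ explicit.
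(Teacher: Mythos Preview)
Your proof is correct and follows the same overall strategy as the paper: encode the irreducible factors of $P$ as a subset of $\Zs$ via the free action induced by $A$, recognise the gcd condition as $\cO(E)\subseteq F$, and apply Proposition~\ref{inter:propC2}. The sign flip in $\cO$ is harmless, as you note.

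Where you differ is in the treatment of multiplicities. The paper first invokes Lemma~\ref{gcd:lemmaG} to reduce to the case where $P$ is $G$-pure with simple roots: it writes $P=P_1\cdots P_N$ with each $P_i$ squarefree and $G$-pure, shows $Q=\prod_i\gcd\{P_i(aT):a\in A\}$, and then treats a single orbit with all multiplicities equal to~$1$. You instead keep the multiplicities and slice each orbit into level sets $E_\Omega^{(t)},F_\Omega^{(t)}$, applying Proposition~\ref{inter:propC2} at every level and summing over $t$. Both devices are sound; yours avoids the auxiliary Lemma~\ref{gcd:lemmaG} at the cost of carrying an extra index $t$ through the height bookkeeping, and it even yields the slightly sharper constant $2+2\log 3+(4\ell+1)\log c_A$ in place of the paper's $c_1=8+(4\ell+1)\log c_A$ (the paper obtains $6+(4\ell+1)\log c_A$ in the pure case and loses an additive~$2$ when reassembling the factors $P_i$). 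The paper's reduction has the virtue of isolating a clean special case in which $E$ and $F$ are literally subsets of $\Zs$ with no weights attached; your level-set approach is more direct and marginally more efficient.
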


The proof of the theorem proceeds first by a reduction to a specific
type of polynomial $P$.  To state and prove the lemma that we apply
for this purpose, we use the following notation.

For each $a\in\Qplus$, we simply write $\lambda_a$ to denote the
automorphism $\lambda_{a,0}$ of $\QT$ which maps a polynomial
$P\in\QT$ to $(\lambda_a P)(T)= P(aT)$ (see \S\ref{sec:trans}).
Moreover, for a given subgroup $G$ of $\Qplus$, we say that two
polynomials $P_1$ and $P_2$ of $\QT$ are \emph{$G$-equivalent} and
write $P_1\sim_G P_1$ if there exists $a\in G$ such that
$P_2=\lambda_a(P_1)$. We also say that a polynomial $P\in\QT$ is
$G$-pure if it can be written as a product of $G$-equivalent
irreducible polynomials of $\QT$.

\begin{lemma}
 \label{gcd:lemmaG}
Let $G$ be a subgroup of $\Qplus$, let $A$ be a finite subset of
$G$, let $P$ be a non-zero polynomial of $\QT$, and let $Q =
\gcd\{P(aT)\,;\, a\in A\}$.  Then, we can write $P$ as a product
$P=P_1\cdots P_N$ of $G$-pure polynomials $P_1,\dots,P_N$ with
simple roots so that $Q=\prod_{i=1}^N \gcd\{P_i(aT)\,;\, a\in A\}$.
\end{lemma}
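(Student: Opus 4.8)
The plan is to make the decomposition completely explicit, by sorting the irreducible factors of $P$ according to $G$-equivalence and then slicing each resulting packet into layers of distinct irreducibles. First I would factor $P$ in $\QT$ and, for each $G$-equivalence class $\mathcal{C}$ of irreducible factors of $P$ (a maximal family of pairwise $G$-equivalent irreducible factors, one per associate class), choose the representatives \emph{compatibly}: fix one irreducible factor $\rho_0$ in $\mathcal{C}$ and represent every other associate class occurring in $\mathcal{C}$ by a polynomial of the form $\lambda_a(\rho_0)$ with $a\in G$. Any two such representatives then differ by a single $\lambda_b$ with $b\in G$, so they are $G$-equivalent in the exact sense used in the lemma, and the scalar $c\in\Qmult$ that this choice introduces in the factorization of $P$ is harmless: it can be absorbed into one of the pieces below (a nonzero multiple of an irreducible is still $G$-pure), or simply ignored, since gcd's are defined only up to a scalar. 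Writing $e_R\ge 1$ for the multiplicity in $P$ of a chosen representative $R$, and $e_{\mathcal{C}}^{*}=\max_{R\in\mathcal{C}}e_R$, I would set
\[
 P_{\mathcal{C},k}=\prod_{R\in\mathcal{C},\ e_R\ge k} R
 \qquad (1\le k\le e_{\mathcal{C}}^{*}),
\]
each of which is squarefree and $G$-pure, with $P=c\prod_{\mathcal{C}}\prod_{k=1}^{e_{\mathcal{C}}^{*}}P_{\mathcal{C},k}$. Relabelling these layers as $P_1,\dots,P_N$ gives a decomposition of the required shape, and it only remains to verify the gcd identity. (Multiplicative independence of $A$ plays no role here; only $A\subseteq G$ is used.)

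For the identity I would fix an irreducible polynomial $R$ and compare the multiplicity $v_R(\,\cdot\,)$ of $R$ on the two sides. Since every $\lambda_a$ with $a\in A\subseteq G$ is an automorphism of $\QT$, one has $v_R(\lambda_a(F))=v_{\lambda_a^{-1}(R)}(F)$, hence $v_R(Q)=\min_{a\in A}v_{\lambda_a^{-1}(R)}(P)$ and $v_R(\gcd\{P_i(aT)\,;\,a\in A\})=\min_{a\in A}v_{\lambda_a^{-1}(R)}(P_i)$; moreover $\lambda_a^{-1}(R)$ is $G$-equivalent to $R$ and so lies in the same $G$-equivalence class. Consequently, if no layer belongs to the class of $R$, both sides vanish at $R$; otherwise only the layers $P_{\mathcal{C},k}$ of that class $\mathcal{C}$ contribute on the right. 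The crucial observation is that, because $P_{\mathcal{C},k}$ is squarefree and consists exactly of the representatives of multiplicity $\ge k$ in $P$, the integer $v_{\lambda_a^{-1}(R)}(P_{\mathcal{C},k})$ is $1$ when $v_{\lambda_a^{-1}(R)}(P)\ge k$ and $0$ otherwise; therefore, with $[\,\cdot\,]$ denoting the Iverson bracket,
\begin{align*}
 v_R\big(\gcd\{P_{\mathcal{C},k}(aT)\,;\,a\in A\}\big)
 &=\min_{a\in A}\big[\,v_{\lambda_a^{-1}(R)}(P)\ge k\,\big] \\
 &=\big[\,\min_{a\in A}v_{\lambda_a^{-1}(R)}(P)\ge k\,\big].
\end{align*}
Summing this over $k\ge 1$ (the brackets being non-increasing in $k$) collapses the right-hand side of the lemma at $R$ to $\min_{a\in A}v_{\lambda_a^{-1}(R)}(P)=v_R(Q)$. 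As this holds for every irreducible $R$, the identity $Q=\prod_{i=1}^N\gcd\{P_i(aT)\,;\,a\in A\}$ follows up to a nonzero scalar, which is exactly the precision with which gcd's are defined.

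The work here is organizational rather than analytic: the one point needing care is the compatible choice of representatives, ensuring that each layer is genuinely $G$-pure and not merely $G$-pure up to associates, together with the tracking of the overall scalar. The real content is condensed into the innocuous-looking interchange $\min_a[\,f(a)\ge k\,]=[\,\min_a f(a)\ge k\,]$, which, summed over $k$, is precisely what makes the layerwise gcd's multiply back up to $Q$; everything else rests on the elementary fact that $G$-equivalent irreducibles from different classes are never associates, so distinct $G$-equivalence classes behave independently under the operation $F\mapsto\gcd\{F(aT)\,;\,a\in A\}$.
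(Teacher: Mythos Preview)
Your proof is correct and arrives at the same decomposition as the paper, but the route is organized differently. The paper proceeds in two successive reductions: first it writes $P=P_1\cdots P_M$ with each $P_i$ squarefree and $P_{i+1}\mid P_i$ (the multiplicity layers), observes that this factorization is unique up to scalars and hence preserved by each $\lambda_a$, and deduces $Q=\prod_i\gcd\{P_i(aT)\,;\,a\in A\}$; second, for squarefree $P$, it splits along $G$-equivalence classes and again uses uniqueness-up-to-scalars to conclude. You instead construct the refined pieces $P_{\mathcal{C},k}$ directly and verify the gcd identity by a valuation computation, the key step being the interchange $\min_a[\,v_{\lambda_a^{-1}(R)}(P)\ge k\,]=[\,\min_a v_{\lambda_a^{-1}(R)}(P)\ge k\,]$ summed over $k$. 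The paper's argument is shorter and more structural (it never computes a valuation), while yours is more explicit and makes transparent exactly why the identity holds at each irreducible; both yield the identical list of $G$-pure squarefree factors.
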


\begin{proof} We first observe that $P$ can be written as a product
$P = P_1 \cdots P_M$ of polynomials $P_1,\dots,P_M$ with simple
roots such that $P_{i+1}$ divides $P_i$ for $i=1,\dots,M-1$, and
that such a factorization is unique up to multiplication of each
$P_i$ by an element of $\bQ^*$.  For each $a\in A$, the equality
$\lambda_a P = (\lambda_a P_1) \cdots (\lambda_a P_M)$ provides a
factorization of $\lambda_a P$ of the same type.  From this we
deduce that $Q=\prod_{i=1}^M \gcd\{P_i(aT)\,;\, a\in A\}$ is the
corresponding factorization of $Q$.  This reduces the proof of Lemma
\ref{gcd:lemmaG} to the case where $P$ has no multiple roots.

Let $R_1,\dots, R_L$ be a set of representatives for the equivalence
classes of $G$-equivalent irreducible factors of $P$.  We can also
write $P$ as a product $P = P_1 \cdots P_L$ of $G$-pure polynomials
$P_1,\dots,P_L$ such that for each $i=1,\dots,L$, all irreducible
factors of $P_i$ are $G$-equivalent to $R_i$.  Again, such a
factorization is unique up to multiplication of each $P_i$ by an
element of $\bQ^*$.  Moreover, for each $a\in A$, the corresponding
factorization of $\lambda_a P$ is $\lambda_a P = (\lambda_a P_1)
\cdots (\lambda_a P_L)$, and so we deduce that $Q=\prod_{i=1}^L
\gcd\{P_i(aT)\,;\, a\in A\}$.  This further reduces the proof of
Lemma \ref{gcd:lemmaG} to the case where $P$ is $G$-pure and so
completes the proof of the lemma.
\end{proof}

\begin{proof}[Proof of Theorem \ref{gcd:thmG}]
Let $G$ denote the subgroup of $\Qplus$ generated by $A$.  We claim
that the conclusion \eqref{thmG:eq1} of the theorem holds with the
constant $c_2 = c_1 - 2$ instead of $c_1$ when $P$ is $G$-pure with
no multiple factors.  If we take this for granted and apply it to
each factor in the factorization $P = P_1\cdots P_N$ of $P$ provided
by Lemma \ref{gcd:lemmaG}, we find that for each $i$ the polynomial
$Q_i=\gcd\{P_i(aT)\,;\, a\in A\}$ satisfies
\[
 \deg(Q_i) \le \rho \deg(P_i)
 \et
 \log H(Q_i) \le \rho \big( c_2 \deg(P_i) + \log H(P_i) \big)
\]
where $\rho=2(\ell+1)/(s-\ell)$.  Since Lemma \ref{gcd:lemmaG} gives
$Q = Q_1\cdots Q_N$, these inequalities in turn imply that $\deg(Q)
\le \rho \deg(P)$ and that
\begin{align*}
 \log H(Q)
 &\le \deg(Q) + \sum_{i=1}^N \log H(Q_i) \\
 &\le \rho \sum_{i=1}^N \big( (c_2+1)\deg(P_i) + \log H(P_i) \big)
 \\
 &\le \rho \big( (c_2+2)\deg(P) + \log H(P) \big),
\end{align*}
as announced.

In order to prove our claim, we now assume that $P$ is $G$-pure with
no multiple factors.  Without loss of generality, we may further
assume that $Q$ is non-constant.  Write $A=\{a_1,\dots,a_s\}$, and
for each $\ux = (x_1,\dots,x_s) \in \bZ^s$ define $a^\ux :=
a_1^{x_1}\cdots a_s^{x_s}$.  With this notation, we have
$a_i=a^{\ue_i}$ for $i=1,\dots,s$ where $(\ue_1, \dots, \ue_s)$
denotes the canonical basis of $\bZ^s$. Moreover, since
$a_1,\dots,a_s$ are multiplicatively independent, the map from
$\bZ^s$ to $G$ which sends each $\ux\in\bZ^s$ to $a^\ux\in G$ is a
group isomorphism.  Choose an irreducible factor $R$ of $P$. As
$P(0)\neq 0$, the polynomial $R$ is not a rational multiple of $T$,
and so Lemma \ref{trans:lemmaH2} shows that, for distinct points
$\ux,\uy\in\bZ^s$, the translates $R(a^{-\ux}T)$ and $R(a^{-\uy}T)$
are not associates. Therefore $P$ is an associate of $\prod_{\ux\in
F} R(a^{-\ux}T)$ for a unique finite subset $F$ of $\bZ^s$, and we
find
\[
 Q = \gcd\Big\{
         \prod_{\ux\in F} R(a^{-\ux} a_i T) \ ;\  i=1,\dots,s
         \Big\}
   = \prod_{\ux\in E} R(a^{-\ux}T),
\]
where $E=(F-\ue_1)\cap \cdots \cap (F-\ue_s)$.  We now apply
Proposition \ref{inter:propC2} to the sets $E$ and $F$.  Since $Q$
is non-constant, the set $E$ is not empty and this proposition
provides an integer $r\ge 1$, a sequence of points
$\ux_1,\dots,\ux_r$ of $E$, and partitions
\[
 E = E_1 \amalg\cdots\amalg E_r
 \et
 F = F_1 \amalg\cdots\amalg F_r \amalg F_{r+1}
\]
of $E$ and $F$ which, for $i=1,\dots,r$, satisfy $E_i \subseteq
C_\ell(\ux_i)$, $F_i \subseteq \cO(E_i)$ and $|E_i| \le \rho |F_i|$
where $\rho=2(\ell+1)/(s-\ell)$.   The third set of conditions
implies $|E| \le \rho |F|$.  Since for each $\ux\in\bZ^s$ the
polynomial $R(a^{-\ux}T)$ has the same degree as $R$, we deduce that
\[
 \deg(Q) = |E| \deg(R) \le \rho |F| \deg(R) = \rho \deg(P).
\]
To compare the heights of $Q$ and $P$, we put $R_i = R(a^{-\ux_i}T)$
for $i=1,\dots,r$. The condition $E_i \subseteq C_\ell(\ux_i)$
implies that, for each $\ux\in E_i$, we have $\|\ux-\ux_i\|_1 \le
2\ell$ and so
\[
 \log H(a^{\ux-\ux_i})
  \le 2\ell \max_{1\le k\le s} \log H(a_k)
    = 2\ell \log(c_A).
\]
Since $R(a^{-\ux}T) = R_i(a^{\ux_i-\ux}T)$, Lemma
\ref{trans:lemmaH1} then gives
\[
 | \log H(R(a^{-\ux}T)) - \log H(R_i) |
 \le \log(3 H(a^{\ux-\ux_i}))\deg(R)
 \le (2+2\ell \log(c_A)) \deg(R)
\]
for each $\ux\in E_i$.  Since $Q = \prod_{i=1}^r \prod_{\ux\in E_i}
R(a^{-\ux}T)$, we deduce that
\begin{align*}
 \Big| \log H(Q) - \sum_{i=1}^r |E_i| \log H(R_i) \Big|
 &\le \deg(Q) + \sum_{i=1}^r \sum_{\ux\in E_i}|
      \log H(R(a^{-\ux}T)) - \log H(R_i) | \\
 &\le \deg(Q) + (2+2\ell \log(c_A)) |E| \deg(R) \\
 &\le (3+2\ell \log(c_A)) \deg(Q).
\end{align*}
The condition $F_i \subseteq \cO(E_i)$ in turn implies that, for
each $\ux\in F_i$, we have $\|\ux-\ux_i\|_1 \le 2\ell+1$ and so the
same computations lead to
\[
 \Big| \log H(P) - \sum_{i=1}^{r+1} |F_i| \log H(R_i) \Big|
 \le (3+(2\ell+1)\log(c_A)) \deg(P).
\]
Putting all these estimates together we conclude finally that
\begin{align*}
 \log H(Q)
 &\le (3+2\ell \log(c_A)) \deg(Q) + \sum_{i=1}^r |E_i| \log H(R_i) \\
 &\le \rho \Big( (3+2\ell \log(c_A)) \deg(P) + \sum_{i=1}^{r+1} |F_i| \log H(R_i)
          \Big) \\
 &\le \rho \big( c_2 \deg(P) + \log H(P) \big).
\end{align*}
where $c_2=6+(4\ell+1)\log(c_A)$.
\end{proof}

%
%

\section{Further small value estimates}
\label{sec:further}

The next result refines Proposition \ref{basic:propR} in a context
where the estimates of the preceding section apply.  We use it below
to prove Part 5) of Theorem \ref{thm:main} in a general form
involving a subgroup of arbitrary rank.

\begin{proposition}
 \label{further:propRbis}
Let $\ell\ge 0$ and $n,t\ge 1$ be integers.  Let $A$ be a finite
multiplicatively independent subset of $\Qplus$, let $s=|A|$ denote
its cardinality,  and let $E$ be a finite non-empty subset of
$\Cmult$.  Assume that
\begin{equation}
 \label{propRbis:eqprop0}
 s \ge \max\{\ell+2,2\ell\}
 \et
 \max(s,|E|) t
 \le n
 \le N(s,\ell):=\binom{s}{\ell+2}\frac{1}{(\ell+1)!2^{\ell+1}}.
\end{equation}
Finally, let $X$ be a real number satisfying
\begin{equation}
 \label{propRbis:eqprop1}
 X^\epsilon
 \ge
 \max\{ 3^n, c_A^n, (2+c_E)^n, \delta_E^{-|E|^2t^2/n} \},
\end{equation}
where $\epsilon=(4\ell+10)^{-1}$, $c_A=\max_{a\in A} H(a)$ and
$c_E=\max_{\xi\in E} \max\{ |\xi|, |\xi|^{-1}\}$.  Suppose that
there exists a non-zero polynomial $P$ of $\ZT$ of degree at most
$n$ and height at most $X$ satisfying
\begin{equation}
 \label{propRbis:eqprop2}
 \max\{|P^{[j]}(a\xi)| \,;\, a\in A,\, \xi\in E,\, 0\le j< 2t \}
 \le
 X^{-\kappa n/(|E|t)}
\end{equation}
for some real number $\kappa > 2+34\epsilon$. Then, there exists a
primary polynomial $S\in\ZT$ with
\begin{equation}
 \label{propRbis:eqprop3}
 \deg(S)\le \frac{2n}{st},
 \quad
 H(S) \le X^{4/(st)}
 \et
 \prod_{\xi\in E} |S(\xi)| \le X^{-\kappa'n/t^2},
\end{equation}
where $\kappa'=(\kappa-2-34\epsilon)/(64(\ell+1))$.
\end{proposition}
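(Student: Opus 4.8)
The plan is to reproduce the architecture of Proposition~\ref{basic:propR}, replacing its crude bounds $\deg(Q)\le n$, $H(Q)\le X^4$ by the much sharper estimates supplied by Theorem~\ref{gcd:thmG}, and keeping the values of the auxiliary polynomial at \emph{every} point of $E$ rather than at a single one (this product over $E$, not a single value, being what the subsequent argument requires). A preliminary reduction to the case $P(0)\neq 0$ is available: writing $P=T^vP_1$ with $P_1(0)\neq 0$, one has $P_1\in\ZT$, $\deg(P_1)\le n$, $H(P_1)=H(P)\le X$ (since $\|T^vP_1\|=\|P_1\|$ and the content is multiplicative), and, as every $a\in A$ and every $\xi\in E$ is invertible, the identity $P_1^{[j]}(z)=\sum_{k=0}^{j}\binom{-v}{k}z^{-v-k}P^{[j-k]}(z)$ together with \eqref{propRbis:eqprop1} shows that $P_1$ still satisfies \eqref{propRbis:eqprop2} with $\kappa$ lowered by $O(\epsilon)$; so I may assume $P(0)\neq 0$.

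Put $\cA=\{\lambda_{a,0}\,;\,a\in A\}$ (so $|\cA|=s$) and let $Q$ be the greatest common divisor in $\QT$ of the polynomials $\lambda(P^{[i]})$ with $\lambda\in\cA$ and $0\le i<t$; taking $i=0$ shows $Q$ divides $\gcd\{P(aT)\,;\,a\in A\}$. Since the number of non-associate irreducible factors of $P$ is at most $\deg(P)\le n\le N(s,\ell)$ and $0\le\ell\le s-2$, Theorem~\ref{gcd:thmG} bounds the size of $\gcd\{P(aT)\,;\,a\in A\}$, and so, heights being essentially multiplicative, $\deg(Q)\le\rho n$ and $\log H(Q)\le\rho\big(\log H(P)+(c_1+1)\deg(P)\big)$, with $\rho=2(\ell+1)/(s-\ell)$ and $c_1=8+(4\ell+1)\log(c_A)$. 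Now $(c_1+1)n\le(4\ell+10)\epsilon\log X=\log X$, using \eqref{propRbis:eqprop1} in the forms $n\le\epsilon\log X$ and $n\log(c_A)\le\epsilon\log X$ together with the \emph{very choice} $\epsilon=(4\ell+10)^{-1}$; hence $\log H(Q)\le\rho(\log X+\log X)=2\rho\log X$, i.e.\ $H(Q)\le X^{2\rho}$. Next, since $|E|t\le n$, Proposition~\ref{basic:propQ} applied to $P$, $\cA$, $E$, $t$ bounds $\prod_{\xi\in E}\big(|Q(\xi)|/\cont(Q)\big)^t$ by $(e^4c_A(2+c_E))^{4n^2}\Delta_E^{-t^2}H(P)^{2n}\delta_P^{|E|t}$, in which $\delta_P^{|E|t}\le X^{-\kappa n}$ by \eqref{propRbis:eqprop2}, $H(P)^{2n}\le X^{2n}$, and every remaining factor is $\le X^{O(\epsilon)n}$ by \eqref{propRbis:eqprop1} (with $\Delta_E\ge\min(1,\delta_E)^{|E|^2/2}$ for the last). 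Combining these losses with the one from the reduction gives $\prod_{\xi\in E}\big(|Q(\xi)|/\cont(Q)\big)^t\le X^{-(\kappa-2-c_2\epsilon)n}$ for some absolute constant $c_2<34$, so that hypothesis \eqref{lemma:lin:eq2} of Lemma~\ref{lemma:linearization} holds for $Q$ with $c:=(\kappa-2-34\epsilon)/2$, which is positive by the hypothesis on $\kappa$.

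Now linearize twice, as in Proposition~\ref{basic:propR}. Apply Lemma~\ref{lemma:linearization} to $Q$ for the sequence consisting of each $\xi\in E$ repeated $t$ times, with parameter $2\rho$ (legitimate since $\deg(Q)\le\rho n$ and $H(Q)\le X^{2\rho}$): part~a) yields an irreducible factor $R$ of $Q$ with $\prod_{\xi\in E}\big(|R(\xi)|/\cont(R)\big)^t<\big(X^{\deg(R)}H(R)^n\big)^{-c/(4\rho)}$. As $R\mid Q$, the polynomial $R$ divides $\lambda(P^{[i]})$ for every $\lambda\in\cA$ and $0\le i<t$, so Lemma~\ref{trans:lemmaH3} (using $|\cA|=s$, $st\le n$ and $4\epsilon\le1$) gives $\deg(R)\le n/(st)$ and $H(R)\le X^{2/(st)}$. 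Apply Lemma~\ref{lemma:linearization} again, this time to $R$, for the same sequence, with parameter $2/(st)$ and constant $c/(2\rho st)$, so that the preceding inequality is exactly hypothesis \eqref{lemma:lin:eq1}; since $R$ is irreducible, part~b) produces a power $S=R^k$, which after normalization is a primary polynomial of $\ZT$ of content $1$, with $\deg(S)\le 2n/(st)$, $H(S)\le X^{4/(st)}$ and $\prod_{\xi\in E}|S(\xi)|^t<X^{-cn/(8\rho st)}$, hence $\prod_{\xi\in E}|S(\xi)|<X^{-cn/(8\rho st^2)}$. Finally, $s\ge 2\ell$ gives $s-\ell\ge s/2$, whence $\rho s\le 4(\ell+1)$, so $X^{-cn/(8\rho st^2)}\le X^{-(c/(32(\ell+1)))n/t^2}=X^{-\kappa'n/t^2}$, which is \eqref{propRbis:eqprop3}.

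The conceptual ingredients are all available from the preceding sections: Theorem~\ref{gcd:thmG} for the size of $Q$, the two-stage linearization of Proposition~\ref{basic:propR}, and Lemma~\ref{trans:lemmaH3} for the irreducible factor $R$. The real work is the constant bookkeeping, on three fronts: that the height gain $H(Q)\le X^{2\rho}$ is just attainable, which is exactly why $\epsilon=(4\ell+10)^{-1}$ is the right choice; that the exponent losses (from the reduction to $P(0)\neq 0$, from Proposition~\ref{basic:propQ}, and from the strictness margins in the two linearizations) together remain below the slack $34\epsilon$ built into the hypothesis on $\kappa$; and, above all, that $\rho s\le 4(\ell+1)$, which is what turns the $s$ appearing in every denominator into the $\ell+1$ appearing in $\kappa'$ and is the sole place the hypothesis $s\ge 2\ell$ intervenes. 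The one genuinely fiddly point is the reduction to $P(0)\neq 0$ itself, where the power of $T$ divided out of $P$ must be controlled using $E\subseteq\Cmult$.
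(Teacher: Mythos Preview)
Your proof is correct and follows essentially the same architecture as the paper's: reduce to $P(0)\neq 0$, bound the gcd $Q$ via Theorem~\ref{gcd:thmG}, apply Proposition~\ref{basic:propQ} for the product over $E$, then linearize twice (Lemma~\ref{lemma:linearization} on $Q$ to extract an irreducible $R$, Lemma~\ref{trans:lemmaH3} to bound $R$, and Lemma~\ref{lemma:linearization} again on $R$ to produce the primary power $S$). The only cosmetic difference is that the paper converts $\rho=2(\ell+1)/(s-\ell)$ to $4(\ell+1)/s$ immediately via $s-\ell\ge s/2$ and carries $8(\ell+1)/s$ as the linearization parameter, whereas you keep $\rho$ symbolic and invoke $\rho s\le 4(\ell+1)$ only at the end; the constant bookkeeping comes out the same.
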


In the applications that we will make of this result, the
cardinality $s$ of $A$ is bounded below by $n^\sigma$ for some real
number $\sigma>0$, and so the condition $n\le N(s,\ell)$ is
satisfied with $\ell=[1/\sigma]$ provided that $n$ is large enough.

\begin{proof}
Write $P$ in the form $P(T) = T^m \tP(T)$ where $\tP(T)\in\ZT$ is
not divisible by $T$.  Then, $\tP$ also has degree at most $n$ and
height at most $X$. Moreover, for any $a\in A$, any $\xi\in E$ and
any integer $j$ with $0\le j <2t$, we find
\begin{align*}
 |\tP^{[j]}(a\xi)|
 & = \left| \sum_{h=0}^j (-1)^h \binom{m+h-1}{h} (a\xi)^{-m-h}
     P^{[j-h]}(a\xi) \right| \\
 & \le \sum_{h=0}^j 2^{m+h-1}
       \max\big( 1, H(a)|\xi|^{-1} \big)^{m+2t}
       \max_{0\le i < 2t} |P^{[i]}(a\xi)| \\
 & \le (2c_Ac_E)^{m+2t} X^{-\kappa n/(t |E|)}.
\end{align*}
Since $n\ge t|E|$ and $(2c_Ac_E)^{m+2t} \le (2c_Ac_E)^{3n} \le
X^{9\epsilon}$, we conclude that
\begin{equation}
 \label{propRbis:eq1}
 \max\{|\tP^{[j]}(a\xi)| \,;\, a\in A,\, \xi\in E,\, 0\le j< 2t \}
 \le
 X^{-(\kappa-9\epsilon) n/(t |E|)}.
\end{equation}

Let $\tQ$ be the greatest common divisor in $\QT$ of the polynomials
$\tP(aT)$ with $a\in A$.  Since $A$ is a multiplicatively
independent subset of $\Qplus$, since $\tP(0)\neq 0$, and since the
number of irreducible factors of $\tP$ is at most $\deg(\tP) \le n
\le N(s,\ell)$, Theorem \ref{gcd:thmG} gives
\begin{align*}
 \deg(\tQ)
 & \le \frac{2(\ell+1)}{s-\ell} \deg(\tP) \le 4(\ell+1)\frac{n}{s} \\
 \noalign{\noindent \text{and}}
 \log H(\tQ)
 &\le \frac{2(\ell+1)}{s-\ell} \big( \log H(\tP)
      + (8 + (4\ell+1)\log(c_A)) \deg(\tP) \big) \\
 &\le \frac{4(\ell+1)}{s} \big( \log(X) + 8n + (4\ell+1)\log(c_A)n \big) \\
 &\le 4(\ell+1) ( 2- \epsilon ) \frac{\log X}{s}
\end{align*}
where the last estimation uses $\max(1,\log c_A) n \le \epsilon \log
X$ and $(4\ell+10)\epsilon = 1$.

Let $Q$ be the greatest common divisor in $\QT$ of the polynomials
$\tP^{[j]}(aT) = \lambda_{a,0}(P^{[j]})(T)$ with $a\in A$ and $0\le
j <t$. Since $Q$ divides $\tQ$, we have
\begin{equation*}
 \deg(Q) \le 4(\ell+1)\frac{n}{s}
 \et
 \log H(Q)
  \le \deg(\tQ) + \log H(\tQ)
  \le 8(\ell+1) \frac{\log X}{s}.
\end{equation*}
Moreover, Proposition \ref{basic:propQ} applied to $\tP$ gives
\begin{equation*}
 \prod_{\xi\in E} \left(\frac{|Q(\xi)|}{\cont(Q))}\right)^t
  \le X^{25\epsilon n} H(\tP)^{2n} X^{-(\kappa-9\epsilon) n}
  \le X^{-(\kappa-2-34\epsilon)n}
  \le X^{-64(\ell+1)\kappa' n}.
\end{equation*}
This means that $Q$ satisfies the hypotheses of Lemma
\ref{lemma:linearization} with $\rho=8(\ell+1)/s$ and
$c=32(\ell+1)\kappa'$.  Consequently, there is at least one
irreducible factor $R$ of $Q$ in $\QT$ which satisfies
\begin{equation}
 \label{propRbis:eq3}
 \prod_{\xi\in E} \left(\frac{|R(\xi)|}{\cont(R)}\right)^t
 \le
 \big( X^{\deg(R)} H(R)^n \big)^{-2\kappa's}.
\end{equation}
By Lemma \ref{trans:lemmaH3}, this polynomial also satisfies
\[
 \deg(R) \le \frac{n}{st}
 \et
 H(R) \le \big( (3c_A)^{2n} H(\tP) \big)^{1/(st)}
      \le X^{(1+4\epsilon)/(st)}
      \le X^{2/(st)}.
\]
Applying Lemma \ref{lemma:linearization} to $R$ with $\rho = 2/(st)$
and $c = 2\rho\kappa's = 4\kappa'/t$, we deduce that some power $S$
of $R$ satisfies
\[
 \deg(S) \le \frac{2n}{st},
 \quad
 H(S) \le X^{4/(st)}
 \et
 \prod_{\xi\in E} \left(\frac{|S(\xi)|}{\cont(S)}\right)^t
 \le
 X^{-\kappa'n/t}.
\]
The quotient of $S$ by its content is then a (non-constant) primary
polynomial of $\bZ[T]$ with the required properties
\eqref{propRbis:eqprop3}.
\end{proof}

For $m=2$, the following result reduces to Part 5) of Theorem
\ref{thm:main}.

\begin{theorem}
 \label{further:thm}
Let $\xi_1,\dots,\xi_m$ be $\bQ$-linearly independent complex
numbers which generate a field of transcendence degree one over
$\bQ$.  Let $\beta, \sigma, \tau, \nu \in \bR$ with
\[
 \sigma\ge 0,\quad
 \tau\ge 0, \quad
 \beta > 1 > \frac{3m\sigma}{m+2} + \tau
 \et
 \nu > 1 + \beta - \frac{2m\sigma}{m+2} - \tau.
\]
Then, for infinitely many integers $n\ge 1$, there is no non-zero
polynomial $P\in\ZT$ of degree at most $n$ and height at most
$\exp(n^\beta)$ which satisfies
\[
 |P^{[j]}(i_1\xi_1+\cdots+i_m\xi_m)| \le \exp(-n^\nu)
\]
for each choice of integers $i_1,\dots,i_m,j$ with $0\le
i_1,\dots,i_m \le n^\sigma$ and $0\le j < n^\tau$.
\end{theorem}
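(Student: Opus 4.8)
The plan is to argue by contradiction, reducing through Proposition \ref{further:propRbis} (which itself uses Theorem \ref{gcd:thmG}) to an application of Gel'fond's criterion (Lemma \ref{gelfond:curve}) in $m$ variables at the point $(\xi_1,\dots,\xi_m)$. Suppose then that for every sufficiently large $n$ there is a non-zero $P=P_n\in\ZT$ with $\deg P\le n$, $H(P)\le\exp(n^\beta)$ and $|P^{[j]}(i_1\xi_1+\cdots+i_m\xi_m)|\le\exp(-n^\nu)$ whenever $0\le i_1,\dots,i_m\le n^\sigma$ and $0\le j<n^\tau$. The hypothesis $\tfrac{3m\sigma}{m+2}+\tau<1$ forces $\nu>\beta$; so, as in the proof of Proposition \ref{further:propRbis}, dividing each $P_n$ by the largest power of $T$ dividing it (which changes neither its degree nor its height, and multiplies its divided derivatives at a point $x$ by a factor of absolute value at most $(2\max\{1,|x|^{-1}\})^{2n}$) I may assume $P_n(0)\neq 0$, at the price of shrinking $\nu$ slightly and of restricting attention to points $x$ of absolute value at least $\exp(-n^{\beta-1})$. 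I would then fix a small $\delta>0$ and a rational $\mu$ with $\tfrac{m\sigma}{m+2}<\mu<\sigma$, taken close enough to $\tfrac{m\sigma}{m+2}$ that $\mu+\tau<1$, $m(\sigma-\mu)+\tau<1$ and $\nu>1+\beta-m(\sigma-\mu)-\tau$ (all possible thanks to the hypotheses, since then $m(\sigma-\mu)$ is close to $\tfrac{2m\sigma}{m+2}$), set $\ell=\lceil 1/\mu\rceil$ and $\epsilon=(4\ell+10)^{-1}$, let $A=A_n$ be the set of primes $p\le n^\mu$ (a multiplicatively independent subset of $\Qplus$, with $s:=|A|$ of size about $n^\mu/\log n$), and let $E=E_n$ be the set of those numbers $i_1\xi_1+\cdots+i_m\xi_m$ with $0\le i_1,\dots,i_m\le n^{\sigma-\mu}$ that are transcendental over $\bQ$ and of absolute value at least $\exp(-\epsilon n^{\beta-1})$. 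Put $t=[n^\tau/2]$, $X=\exp(n^\beta)$ and $\kappa=n^\delta$.

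Because $\xi_1,\dots,\xi_m$ are $\bQ$-linearly independent and generate a field of transcendence degree one, the set of $\ux\in\bZ^m$ with $\sum_k x_k\xi_k$ algebraic is a subgroup of $\bZ^m$ of rank at most $m-1$; so only $O(n^{(m-1)(\sigma-\mu)})$ lattice points are lost to the transcendence condition, and — granting for the moment that the remaining condition $|x|\ge\exp(-\epsilon n^{\beta-1})$ costs no more — one has $0\notin E$ and $|E|\ge\tfrac12 n^{m(\sigma-\mu)}$ for $n$ large. I would then check that, for $n$ large, all the hypotheses of Proposition \ref{further:propRbis} hold for this data, with $c_E$ replaced throughout by $\max_{\xi\in E}|\xi|\le Cn^{\sigma-\mu}$ (legitimate once $P_n(0)\neq0$, since the factor $|\xi|^{-1}$ then no longer enters its proof): indeed $s\ge\max\{\ell+2,2\ell\}$, $\mu(\ell+2)>1$ gives $n\le N(s,\ell)$, $\max(s,|E|)\,t\le n$ by the choice of $\mu$, $\kappa>2+34\epsilon$, and $\nu>1+\beta-m(\sigma-\mu)-\tau$ together with the hypothesis on $P_n$ (transported to the points $a\xi$, $a\in A$, $\xi\in E$, which are among the $i_1\xi_1+\cdots+i_m\xi_m$ with $0\le i_k\le n^\sigma$) yields \eqref{propRbis:eqprop2}. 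The proposition then produces a primary $S\in\ZT$ with $\deg S\le 2n/(st)$, $H(S)\le X^{4/(st)}$ and $\prod_{\xi\in E}|S(\xi)|\le X^{-\kappa' n/t^2}$.

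Choosing $\xi_0=i_1^0\xi_1+\cdots+i_m^0\xi_m\in E$ with $|S(\xi_0)|$ minimal — so $S(\xi_0)\neq0$, since $\xi_0$ is transcendental and $S\neq0$ — I would form the polynomial $\widetilde S(T_1,\dots,T_m):=S(i_1^0T_1+\cdots+i_m^0T_m)\in\bZ[T_1,\dots,T_m]$, for which $\widetilde S(\xi_1,\dots,\xi_m)=S(\xi_0)\neq0$, $\deg\widetilde S\le\deg S$, $\log H(\widetilde S)\le\log H(S)+\deg(S)\log(mn^\sigma)$ and $-\log|\widetilde S(\xi_1,\dots,\xi_m)|\ge\kappa' n^{1+\beta}/(|E|t^2)$. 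Using $\beta>1$ and the fact that $st$ behaves like $n^{\mu+\tau}/\log n$, these bounds become $\deg\widetilde S\le n^{1-\mu-\tau+\delta}$, $\log H(\widetilde S)\le n^{\beta-\mu-\tau+\delta}$ and $-\log|\widetilde S(\xi_1,\dots,\xi_m)|\ge n^{1+\beta-m(\sigma-\mu)-2\tau-\delta}$ for $n$ large. Applying Lemma \ref{gelfond:curve} with $\alpha=1-\mu-\tau+\delta$ and $\beta_G=\beta-\mu-\tau+\delta$ (so $\beta_G\ge\alpha$, as $\beta>1$), the required smallness $\exp(-n^{\alpha+\beta_G+\delta_0})$ is reached precisely because $(m+2)\mu-m\sigma>0$ leaves room for $\delta,\delta_0$; this contradicts the lemma at the infinitely many $n$ it provides and settles the non-degenerate case.

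It remains to handle the degenerate case, which is where I expect the main difficulty to lie. If for some large $n$ either the reduced data violates \eqref{propRbis:eqprop1}, which (its first three maxima being harmless) can only happen through $\delta_E<X^{-\epsilon n/(|E|^2t^2)}$, or too many lattice points of the defining box of $E$ have absolute value $<\exp(-\epsilon n^{\beta-1})$ (so that $|E|$ drops below $\tfrac12 n^{m(\sigma-\mu)}$), then in every case there is a non-zero $(j_1,\dots,j_m)\in\bZ^m$ with $\max_k|j_k|\le 2n^\sigma$ such that $\Lambda:=\sum_k j_k\xi_k$ is non-zero (by $\bQ$-linear independence) but very small; exactly as in the treatment of the case $\delta_E<X^{-n/(st)^2}$ in the proof of Theorem \ref{thm:ixi+ir}, one then takes $R(T_1,\dots,T_m)=(j_1T_1+\cdots+j_mT_m)^{d}$ with $d=[n^\alpha]$ (and, when $\Lambda$ can be chosen with $\max_k|j_k|$ merely bounded, uses that $\log H(R)=0$) and verifies that $R$ contradicts Lemma \ref{gelfond:curve}. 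The delicate point — and the main obstacle — is to make precise how small $|\Lambda|$ must be in each failure mode, and then to arrange that the resulting monomial power contradicts the \emph{same} instance of Gel'fond's criterion as the main case: this is exactly where the full strength of $\tfrac{3m\sigma}{m+2}+\tau<1$ is needed (the factor $3$ here, against the factor $2$ in the bound on $\nu$, accounts for the "gap of $\sigma$" noted in the introduction), and it constrains the admissible range of $\mu$. Everything else is routine bookkeeping of degrees, heights and values through Proposition \ref{further:propRbis}, Theorem \ref{gcd:thmG} and the linearization Lemma \ref{lemma:linearization}.
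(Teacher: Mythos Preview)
Your proposal is correct and follows essentially the same route as the paper: contradiction, apply Proposition~\ref{further:propRbis} with $A$ the small primes and $E$ a box of linear combinations of the $\xi_k$, pick a point $\xi_0\in E$ minimizing $|S(\xi_0)|$, pass to the multivariate polynomial $S(i_1^0T_1+\cdots+i_m^0T_m)$, and contradict Lemma~\ref{gelfond:curve}; in the degenerate case where $\delta_E$ is small, use a power of the linear form $\sum j_kT_k$ instead. Your parametrization $\mu\approx m\sigma/(m+2)$, $\sigma-\mu\approx 2\sigma/(m+2)$ matches the paper's $m\lambda$ and $2\lambda$ with $\lambda=\sigma/(m+2)$.

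Two places where you make life harder than necessary. First, the division of $P_n$ by a power of $T$ is already carried out \emph{inside} the proof of Proposition~\ref{further:propRbis} (the passage from $P$ to $\tilde P$), so doing it externally is redundant and forces you into the awkward claim about modifying $c_E$. Second, the paper simply takes $E$ to consist of points with $|\xi|\ge 1$ (and $1\le i_k$), which makes $c_E=\max_{\xi\in E}|\xi|$ automatic and lets one count the excluded points by the trivial observation that, for fixed $i_2,\dots,i_m$, at most $O(1)$ values of $i_1$ give $|i_1\xi_1+\cdots|<1$ and at most one gives an algebraic value (assuming WLOG $\xi_1$ transcendental). This eliminates your second ``degenerate'' sub-case entirely. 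With those simplifications, the degenerate case reduces cleanly to $\delta_E<\exp(-n^{1+\beta-4m\lambda-2\tau-\delta})$, and the power $[2n/(st)]$ of the linear form does the job precisely because $1>3m\lambda+\tau$, which is the hypothesis $\tfrac{3m\sigma}{m+2}+\tau<1$.
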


\begin{proof}
Suppose on the contrary that such a polynomial exists for each
sufficiently large value of $n$.  Then we have $\sigma>0$ by
\cite[Prop.~1]{LR1}.  Moreover, since $\xi_1,\dots,\xi_m$ are not
all algebraic over $\bQ$, we may assume without loss of generality
that $\xi_1$ is transcendental over $\bQ$.  Define
\[
 \lambda = \frac{\sigma}{m+2},
 \quad
 \ell = \Big[\frac{2}{m\lambda}\Big],
 \quad
 \delta =
  \frac{1}{8} \min\big\{ 4m\lambda,\, 1 - 3m\lambda - \tau,\,
      \nu - 1 - \beta + 2m\lambda + \tau \big\},
\]
and note that the hypotheses lead to $\delta>0$.

For a given positive integer $n$, define $A$ to be the set of all
prime numbers $p$ with $p \le n^{m\lambda}$, and define $E$ to be
the set of all linear combinations $i_1\xi_1+\cdots+i_m\xi_m$ with
integer coefficients in the range $1 \le i_1,\dots,i_m \le
n^{2\lambda}$, which are not algebraic over $\bQ$ and have absolute
value at least $1$. Since for fixed $i_2,\dots,i_m$, there are at
most $1+2/|\xi_1|$ values of $i_1$ for which $i_1\xi_1 + \cdots +
i_m\xi_m$ has absolute value less than one, and at most one value of
$i_1$ for which it is algebraic over $\bQ$, we readily get that, for
$n$ sufficiently large, we have
\[
 n^{m\lambda-\delta} \le |A| \le n^{m\lambda}
 \et
 n^{2m\lambda-\delta} \le |E| \le n^{2m\lambda}.
\]

Suppose first that $\delta_E \ge \exp(
-n^{1+\beta-4m\lambda-2\tau-\delta} )$.  Then, we claim that, if $n$
is sufficiently large, all the hypotheses of Proposition
\ref{further:propRbis} are satisfied with the choice of
\[
 s = |A|,
 \quad
 t=[ (1 + n^\tau)/2 ],
 \quad
 X=\exp(n^\beta),
 \et
 \kappa=n^{6\delta}.
\]
First of all, we have $\max(s,|E|)t \le n$ because $2m\lambda + \tau
< 1$.  As $\delta\le m\lambda/2$, we have $s \ge n^{m\lambda/2}$ and
so $N(s,\ell) \ge n$ for $n$ large enough.  For large $n$, we also
find $c_A\le n$, $c_E \le n$ and $\delta_E^{-|E|^2t^2/n} \le
\exp(n^{\beta-\delta}) \le X^\epsilon$ with $\epsilon =
(4\ell+10)^{-1}$, while the hypothesis on $P$ gives
\[
 \max\{|P^{[j]}(a\xi)| \,;\, a\in A, \, \xi\in E,\, 0\le j< 2t \}
  \le \exp(-n^\nu)
  \le X^{-\kappa n/(t|E|)}.
\]
Consequently, for each sufficiently large value of $n$, there exists
$S\in\ZT\setminus\{0\}$ with
\[
 \deg(S)
   \le \frac{2n}{st},
 \quad
 H(S)
   \le \exp\left( \frac{4n^\beta}{st} \right)
 \et
 \prod_{\xi\in E} |S(\xi)| \le \exp(-n^{1+\beta-2\tau+5\delta}).
\]
Since $|E| \le n^{2m\lambda}$, the last condition implies the
existence of a point $\xi\in E$ such that
\[
 |S(\xi)|
 \le \exp(-n^{1+\beta-2m\lambda-2\tau+5\delta}).
\]
Moreover, since $\xi$ is transcendental over $\bQ$, we have
$S(\xi)\neq 0$.  Define
\[
 Q(T_1,\dots,T_m)=S(i_1T_1+\cdots+i_mT_m) \in \bZ[T_1,\dots,T_m],
\]
where $i_1,\dots,i_m$ are the positive integers for which
$\xi=i_1\xi_1+\cdots+i_m\xi_m$.  Since $i_1,\dots,i_m$ are bounded
above by $n^{2\lambda}$, we find, assuming that $n$ is sufficiently
large,
\begin{align*}
 \deg(Q)
 \le m\deg(S) &\le n^{1-m\lambda-\tau+2\delta}, \\
 H(Q)
 \le (1+mn^{2\lambda})^{\deg(S)}H(S)
  &\le \exp(n^{\beta-m\lambda-\tau+2\delta}),\\
 0 < |Q(\xi_1,\dots,\xi_m)|
   = |S(\xi)|
    &\le \exp(-n^{1+\beta-2m\lambda-2\tau+5\delta}).
\end{align*}

Suppose now that $\delta_E < \exp( -n^{1 + \beta - 4m\lambda - 2\tau
- \delta} )$, and choose integers $i_1,\dots,i_m$ not all zero, in
absolute value at most $n^{2\lambda}$, such that
$|i_1\xi_1+\cdots+i_m\xi_m| = \delta_E$. Then, if $n$ is large
enough, the polynomial $Q=(i_1T_1+\cdots+i_mT_m)^{[2n/(st)]}$
satisfies the same final estimates as in the preceding case, because
$\delta_E^{[2n/(st)]} \le \exp( -n^{2 + \beta - 5m\lambda - 3\tau -
\delta} ) \le \exp( -n^{1 + \beta - 2m\lambda - 2\tau + 7\delta} )$.
The existence of such a polynomial $Q$ for each $n$ large enough
contradicts Lemma \ref{gelfond:curve}.
\end{proof}

%
%

\section{A note on Zarankiewicz problem}
\label{sec:Zaran}

Given integers $m_1,n_1,m,n$ with $2\le m_1\le m$ and $2\le n_1\le
n$, a well-known problem of K.~Zarankiewicz asks for the smallest
integer $k=k(m_1,n_1;m,n)$ such that any $m\times n$ matrix with
coefficients in $\{0,1\}$ containing at least $k$ ones admits a
sub-matrix of size $m_1\times n_1$ consisting only of ones.  Chapter
12 of the book \cite{ES} by P.~Erd\"os and J.~Spencer provides
general estimates for this quantity along with references to early
work on this problem.  In particular, we mention a result of
T.~K\"ovari, V.~T.~S\'os and P.~Tur\'an \cite{KST} which shows that
$k(2,2;n,n)=n^{3/2}(1-o(1))$.  In the next section, we will use the
following result which we view as an estimate for a continuous
version of Zarankiewicz problem in the case $m_1=2$.

\begin{proposition}
 \label{Zaran:propZ}
Let $A$ and $E$ be finite non-empty sets, let $\kappa_1$ and
$\kappa_2$ be positive real numbers, and let $\varphi\colon A\times
E \to [0,\kappa_1]$ be any function on $A\times E$ with values in
the interval $[0,\kappa_1]$. Suppose that the inequality
\[
 \sum_{\xi\in E} \min\{\varphi(a_1,\xi),\varphi(a_2,\xi)\} \le
 \kappa_2
\]
holds for any pair of distinct elements $a_1$ and $a_2$ of $A$.
Then, we have
\[
 \sum_{a\in A}\sum_{\xi\in E} \varphi(a,\xi)
 \le
 \kappa_1 |E| + \kappa_2 \binom{|A|}{2}.
\]
\end{proposition}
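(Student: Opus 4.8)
The plan is a single double-counting estimate. First I would fix $\xi\in E$ and choose, among the elements $a\in A$, one $a^*=a^*(\xi)$ at which $a\mapsto\varphi(a,\xi)$ attains its maximum over $A$. Since $\varphi$ takes values in $[0,\kappa_1]$ we have $\varphi(a^*,\xi)\le\kappa_1$, while for every $a\neq a^*$ the inequality $\varphi(a,\xi)\le\varphi(a^*,\xi)$ gives $\varphi(a,\xi)=\min\{\varphi(a^*,\xi),\varphi(a,\xi)\}$. Summing over $a\in A$ therefore yields
\[
 \sum_{a\in A}\varphi(a,\xi)
 \;=\; \varphi(a^*,\xi)+\sum_{a\in A\setminus\{a^*\}}\min\{\varphi(a^*,\xi),\varphi(a,\xi)\}
 \;\le\; \kappa_1+\sum_{\substack{\{a_1,a_2\}\subseteq A\\ a_1\neq a_2}}\min\{\varphi(a_1,\xi),\varphi(a_2,\xi)\},
\]
the last inequality because each summand $\min\{\varphi(a^*,\xi),\varphi(a,\xi)\}$ with $a\neq a^*$ is one of the $\binom{|A|}{2}$ nonnegative terms indexed by the unordered pairs of distinct elements of $A$.

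Then I would sum this inequality over all $\xi\in E$ and interchange the two summations, obtaining
\[
 \sum_{a\in A}\sum_{\xi\in E}\varphi(a,\xi)
 \;\le\; \kappa_1|E|+\sum_{\substack{\{a_1,a_2\}\subseteq A\\ a_1\neq a_2}}\ \sum_{\xi\in E}\min\{\varphi(a_1,\xi),\varphi(a_2,\xi)\}.
\]
By hypothesis each inner sum on the right is at most $\kappa_2$, and there are $\binom{|A|}{2}$ pairs, so the right-hand side is at most $\kappa_1|E|+\kappa_2\binom{|A|}{2}$, which is the desired bound. (When $|A|=1$ there are no pairs and the estimate reduces to the trivial $\sum_{\xi\in E}\varphi(a,\xi)\le\kappa_1|E|$.)

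There is no serious obstacle. The only point requiring a little care is that in the per-$\xi$ estimate one should, for each non-maximal $a$, replace $\varphi(a,\xi)$ by its minimum against the maximal value $\varphi(a^*,\xi)$, and then bound the sum over the pairs containing $a^*$ by the larger sum over \emph{all} pairs; this is wasteful but affordable, since the target inequality carries the full factor $\binom{|A|}{2}$ in front of $\kappa_2$. A more economical pairing (say, matching consecutive elements after sorting the values $\varphi(\cdot,\xi)$ in decreasing order) would make the chosen pairs depend on $\xi$ and so could not be controlled pair-by-pair via the hypothesis.
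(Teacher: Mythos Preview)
Your proof is correct and in fact more direct than the paper's argument. The paper fixes an enumeration $a_1,\dots,a_m$ of $A$, sets $\phi_j(\xi)=\varphi(a_j,\xi)$, and uses the identity $\|\phi\|_1+\|\psi\|_1=\|\min(\phi,\psi)\|_1+\|\max(\phi,\psi)\|_1$ with $\phi=\max\{\phi_1,\dots,\phi_{j-1}\}$ and $\psi=\phi_j$; bounding $\|\min(\max\{\phi_1,\dots,\phi_{j-1}\},\phi_j)\|_1$ by $(j-1)\kappa_2$ via the hypothesis, the resulting inequalities telescope to $\sum_j\|\phi_j\|_1\le\binom{m}{2}\kappa_2+\|\max\{\phi_1,\dots,\phi_m\}\|_1\le\binom{m}{2}\kappa_2+\kappa_1|E|$. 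Your pointwise choice of a maximizer $a^*(\xi)$ is exactly the evaluation of $\max\{\phi_1,\dots,\phi_m\}$ at $\xi$, so the two proofs isolate the same ``top'' contribution; the difference is that you bound the remaining $|A|-1$ terms at each $\xi$ in one stroke by the full sum over unordered pairs, whereas the paper reaches the same $\binom{|A|}{2}\kappa_2$ through the telescoping $\sum_{j=2}^m(j-1)$. Your route avoids the auxiliary $\min/\max$ machinery at the cost of the mild overcount you already flagged, which is harmless here since the target bound carries the factor $\binom{|A|}{2}$.
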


This gives $k(2,n_1;m,n)\le 1+n+(n_1-1)m(m-1)/2$ in connection to
the problem of Zarankiewicz mentioned above.  Indeed, an $m\times n$
matrix with coefficients in $\{0,1\}$ can be viewed as a function
$\varphi\colon A\times E \to \{0,1\}$ where $A=\{1,\dots,m\}$ and
$E=\{1,\dots,n\}$.  If it contains no $2\times n_1$ sub-matrix
consisting entirely of ones, the hypotheses of the proposition are
satisfied with $\kappa_1=1$ and $\kappa_2=n_1-1$ and consequently
the matrix contains at most $n+(n_1-1)m(m-1)/2$ ones.

\begin{proof}
Let $[0,\infty)^E$ denote the set of all functions from $E$ to
$[0,\infty)$.  We first observe that, if $\phi$ and $\psi$ belong to
this set then their minimum, their maximum and their sum
\begin{equation}
 \label{propZ:eq1}
 \phi + \psi
  = \min(\phi,\psi) + \max(\phi,\psi)
\end{equation}
also belong to it.  Moreover, the $\ell_1$-norm of any function
$\phi\colon E \to [0,\infty)$ is simply $\|\phi\|_1 = \sum_{\xi\in
E} \phi(\xi)$. Therefore the $\ell_1$-norm is additive on
$[0,\infty)^E$ and by applying it on both sides of the equality
\eqref{propZ:eq1} with functions $\phi,\psi\in [0,\infty)^E$, we
obtain
\begin{equation}
 \label{propZ:eq2}
 \|\phi\|_1 +\|\psi\|_1
  = \|\min\{\phi,\psi\}\|_1 + \|\max\{\phi,\psi\}\|_1.
\end{equation}

Let $m=|A|$ and let $a_1,\dots,a_m$ denote the $m$ elements of $A$.
For $i=1,\dots,m$, we define a function $\phi_i\colon E\to
[0,\kappa_1]$ by putting $\phi_i(\xi)=\varphi(a_i,\xi)$ for each
$\xi\in E$.  By hypothesis, we have $\| \min(\phi_i, \phi_j) \|_1
\le \kappa_2$ for any pair of integers $i$ and $j$ with $1\le i<j\le
m$.  So, for $j=2,\dots,m$, the function
$
 \min( \max\{\phi_1, \dots, \phi_{j-1}\}, \phi_j )
 = \max\{ \min(\phi_1,\phi_j), \dots, \min(\phi_{j-1},\phi_j) \}
$
satisfies
\[
 \|\min( \max\{\phi_1, \dots, \phi_{j-1}\}, \phi_j )\|_1
 \le \sum_{i=1}^{j-1} \| \min(\phi_i,\phi_j) \|_1
 \le (j-1)\kappa_2.
\]
Applying \eqref{propZ:eq2} with $\phi=\max\{\phi_1, \dots,
\phi_{j-1}\}$ and $\psi=\phi_j$, we deduce that
\[
 \| \max\{\phi_1, \dots, \phi_{j-1}\} \|_1 + \|\phi_j\|_1
 \le
 (j-1)\kappa_2 + \| \max\{\phi_1, \dots, \phi_j\} \|_1.
\]
Summing these inequalities term by term for $j=2,\dots,m$, we obtain
after simplification
\[
 \sum_{j=1}^m \|\phi_j\|_1
 \le
 \sum_{j=2}^m (j-1)\kappa_2 + \| \max\{\phi_1, \dots, \phi_m\}
 \|_1.
\]
Since $\max\{\phi_1, \dots, \phi_m\}$ takes values in
$[0,\kappa_1]$, its $\ell_1$-norm is at most $\kappa_1 |E|$, and the
conclusion follows upon noting that $\sum_{a\in A}\sum_{\xi\in E}
\varphi(a,\xi) = \sum_{j=1}^m \|\phi_j\|_1$.
\end{proof}

%
%

\section{Values of polynomials at multiples of $\xi$}
\label{sec:multxi}

In this section, we first prove Theorem \ref{intro:thm:prod} as a
consequence of the next proposition, and then proceed with the proof
of Theorem \ref{thm:main}, part 3).

\begin{proposition}
 \label{multxi:propRter}
Let $n\in \bN^*$, let $A$ be a finite subset of $\Qplus$, and let
$E$ be a finite subset of $\Cmult$. Assume that
\begin{equation}
 \label{propRter:eqprop1}
 |A| \ge 2
 \et
 \binom{|A|}{2} \le |E| \le n.
\end{equation}
Finally, let $\epsilon$ and $X$ be real numbers with
\begin{equation}
 \label{propRter:eqprop2}
 0< \epsilon \le \frac{1}{10}
 \et
 X^\epsilon
 \ge
 \max\{ e^n, c_A^n, (2+c_E)^n, \Delta_E^{-1/n} \},
\end{equation}
where $c_A=\max_{a\in A} H(a)$ and $c_E=\max_{\xi\in E}
\max(|\xi|,|\xi|^{-1})$. Suppose that there exists a non-zero
polynomial $P$ of $\ZT$ of degree at most $n$ and height at most $X$
satisfying
\begin{equation}
 \label{propRter:eqprop3}
 \prod_{a\in A} \prod_{\xi\in E} |P(a\xi)|
 < X^{-16 \kappa |E| n}
\end{equation}
for some real number $\kappa \ge 6$. Then, there exist a primary
polynomial $S\in\QT$ and a point $\xi\in E$ satisfying
\[
 \deg(S)\le n,
 \quad
 H(S) \le X^{2+2\epsilon}
 \et
 \frac{|S(\xi)|}{\|S\|} \le X^{-\kappa n}.
\]
\end{proposition}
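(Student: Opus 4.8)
The construction starts from the given polynomial $P$; after dividing by its content we may assume $P$ is a primitive polynomial of $\ZT$, so $\cont(P)=1$, $\|P\|=H(P)\le X$, and \eqref{propRter:eqprop3} is preserved since $\cont(P)\ge 1$. First I would dispose of a degenerate case: if some single value $|P(a_0\xi_0)|$ with $a_0\in A$, $\xi_0\in E$ is already as small as $X^{-16\kappa|A|n}$, then applying Lemma \ref{lemma:linearization} to $P$ at the single point $a_0\xi_0$ (taking $\rho=1$, so the bounds $\deg\le n$, $H\le X^2$ are kept) produces a power $Q=R^k$ of an irreducible factor $R$ of $P$ with $|Q(a_0\xi_0)|/\cont(Q)$ very small; then $S:=\lambda_{a_0}Q=(\lambda_{a_0}R)^k$ is primary with $\deg S=\deg Q\le n$, $H(S)\le(3c_A)^nH(Q)\le X^{2+2\epsilon}$ by Lemma \ref{trans:lemmaH1}, and $|S(\xi_0)|/\|S\|\le X^\epsilon|Q(a_0\xi_0)|/\cont(Q)\le X^{-\kappa n}$ again by Lemma \ref{lemma:linearization}(b) and Lemma \ref{trans:lemmaH1}. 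So from now on I assume every $|P(a\xi)|$ exceeds $X^{-16\kappa|A|n}$.

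In the main case the plan is to use Zarankiewicz-type counting to locate \emph{two} elements $a_1\ne a_2$ of $A$ such that the translates $F:=P(a_1T)$ and $G:=P(a_2T)$ are simultaneously small on a sizeable subset $E'$ of $E$. I would set $\varphi(a,\xi)=(1+3\epsilon)\log X-\log|P(a\xi)|$, which lies in $[0,\kappa_1]$ with $\kappa_1=O(\kappa|A|n)\log X$ by the degenerate-case hypothesis and by the crude bound $|P(a\xi)|\le\|P\|\,e^n(c_Ac_E)^n\le X^{1+3\epsilon}$ furnished by \eqref{propRter:eqprop2}. Then \eqref{propRter:eqprop3} gives $\sum_a\sum_\xi\varphi(a,\xi)>|A||E|(1+3\epsilon)\log X+16\kappa|E|n\log X$, which exceeds $\kappa_1|E|+\kappa_2\binom{|A|}{2}$ for a suitable $\kappa_2$. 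The contrapositive of Proposition \ref{Zaran:propZ} then yields a pair $(a_1,a_2)$ with $\sum_{\xi\in E}\min\{\varphi(a_1,\xi),\varphi(a_2,\xi)\}>\kappa_2$, and a simple averaging argument converts this into a subset $E'\subseteq E$ on which both $|F(\xi)|$ and $|G(\xi)|$ lie below $X^{-\theta}$ for a correspondingly large $\theta$, so that $\prod_{\xi\in E'}\max\{|F(\xi)|,|G(\xi)|\}<X^{-\theta|E'|}$.

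With $Q:=\gcd(F,G)\in\QT$ one has $\deg Q\le n$ and $H(Q)\le e^nH(F)\le X^{1+3\epsilon}$ by Lemma \ref{trans:lemmaH1}. Proposition \ref{result:propFG} (with $t=1$ and the set $E'$) then bounds $\prod_{\xi\in E'}(|Q(\xi)|/\|Q\|)$ by $\prod_{\xi\in E'}\max\{|F(\xi)|/\|F\|,|G(\xi)|/\|G\|\}$ times height factors and the constant $c_1$, the latter absorbed into $X^{O(\epsilon n)}$ thanks to \eqref{propRter:eqprop2}; hence $\prod_{\xi\in E'}(|Q(\xi)|/\cont(Q))$ is very small. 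Two successive uses of Lemma \ref{lem
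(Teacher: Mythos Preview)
Your approach inverts the order of operations compared to the paper, and this creates a genuine gap that I do not see how to close.

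First, a quantitative slip: your degenerate-case threshold $X^{-16\kappa|A|n}$ is far too small. With it, in the main case your function $\varphi$ only satisfies $\varphi(a,\xi)\le\kappa_1$ with $\kappa_1\approx 16\kappa|A|n\log X$. But then $\kappa_1|E|\ge 32\kappa|E|n\log X$ (since $|A|\ge 2$) already exceeds the total $\sum_{a,\xi}\varphi(a,\xi)\approx 16\kappa|E|n\log X$ coming from \eqref{propRter:eqprop3}, so the contrapositive of Proposition~\ref{Zaran:propZ} yields nothing. This particular error is repairable by taking the threshold to be $X^{-C\kappa n}$ with a constant $C<16$ independent of $|A|$.

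The deeper problem is structural. Even after that fix, Zarankiewicz gives you a pair $(a_1,a_2)$ with $\sum_{\xi}\min\{\varphi(a_1,\xi),\varphi(a_2,\xi)\}\gtrsim \kappa n\log X$, and Proposition~\ref{result:propFG} applied to $F=P(a_1T)$, $G=P(a_2T)$ then bounds $\prod_{\xi}|Q(\xi)|/\|Q\|$ by roughly $X^{-C'\kappa n}$ for the gcd $Q$. After linearizing $Q$ you get a primary $S$ with $\prod_{\xi\in E}|S(\xi)|/\cont(S)\lesssim X^{-C''\kappa n}$, and extracting a single point costs a factor $|E|$ in the exponent: you end with $|S(\xi_0)|/\|S\|\lesssim X^{-C''\kappa n/|E|}$, not $X^{-\kappa n}$. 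Since $|E|$ can be as large as $n$, this is fatal.

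The paper avoids this loss by linearizing \emph{first}: apply Lemma~\ref{lemma:linearization} to $P$ over the whole product $\prod_a\prod_\xi$ to obtain a \emph{primary} $Q$ with $\prod_a\prod_\xi|Q(a\xi)|\le X^{-2\kappa|E|n}$. Now the crucial point is that, $Q$ being a power of an irreducible not associated to $T$, the translates $Q(a_1T)$ and $Q(a_2T)$ are \emph{coprime} for distinct $a_1,a_2\in\Qplus$ (Lemma~\ref{trans:lemmaH2}). Hence Proposition~\ref{result:propFG} is applied with gcd equal to $1$, and the left-hand side is $\ge 1$; this forces $\sum_\xi\min\{\varphi(a_1,\xi),\varphi(a_2,\xi)\}\le 6-3\epsilon$, a bound \emph{independent of $|E|$, $n$, $\kappa$}. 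One then argues by contradiction: if every $\varphi(a,\xi)\le\kappa+\epsilon$, Proposition~\ref{Zaran:propZ} with $\kappa_1=\kappa+\epsilon$ and $\kappa_2=6-3\epsilon$ gives $\sum\varphi\le(\kappa+6-2\epsilon)|E|<2\kappa|E|$, contradicting $\sum\varphi\ge 2\kappa|E|$. So some $\varphi(a,\xi)>\kappa+\epsilon$ directly, and $S(T)=Q(aT)$ at that point $\xi$ does the job. The coprimality coming from primality is what makes $\kappa_2$ absolute and eliminates the $|E|$ loss; your route through $\gcd(P(a_1T),P(a_2T))$ cannot access it.
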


\begin{proof}
Applying the linearization Lemma \ref{lemma:linearization} b) with
$\rho=1$ and $c=8\kappa|E|$, we find that there exists a power $Q$
of some non-constant irreducible factor of $P$ in $\ZT$ satisfying
\begin{equation}
 \label{propRter:eq3}
 \deg(Q)\le n,
 \quad
 H(Q)\le X^2
 \et
 \prod_{a\in A}\prod_{\xi\in E} |Q(a\xi)|
 \le
 X^{-2\kappa |E| n}.
\end{equation}
We also note that $Q$ is not a power of $T$ because, for each $a\in
A$ and each $\xi\in E$, we have $|a|\ge c_A^{-1} \ge
X^{-\epsilon/n}$ and $|\xi|\ge c_E^{-1} \ge X^{-\epsilon/n}$ and so,
for a power of $T$, the product $\prod_{a\in A}\prod_{\xi\in E}
|Q(a\xi)|$ would be bounded below by $X^{-2\epsilon|A||E|} \ge
X^{-4\epsilon |E| n}$ against the upper bound.  For each $a\in A$
and each $\xi\in E$, we find
\[
 \|Q(aT)\| \ge H(a)^{-n}\|Q\| \ge c_A^{-n} \ge X^{-\epsilon}
 \et
 \frac{|Q(a\xi)|}{\|Q(aT)\|}
   \le (|\xi|+1)^n \le (c_E+1)^n \le X^\epsilon.
\]
Therefore we can write
\begin{equation}
 \label{propRter:eq5}
 \min(1,|Q(a\xi)|)
 \ge \frac{|Q(a\xi)|}{X^\epsilon \|Q(aT)\|}
 = X^{-\varphi(a,\xi)n}
\end{equation}
for some real number $\varphi(a,\xi)\ge 0$.   This define a function
$\varphi\colon A\times E \to [0,\infty)$ which, by the last
condition of \eqref{propRter:eq3}, satisfies
\begin{equation}
 \label{propRter:eq6}
 \sum_{a\in A} \sum_{\xi\in E} \varphi(a,\xi)
 \ge
 2\kappa |E|.
\end{equation}
Moreover, for each $a\in A$, Lemma \ref{trans:lemmaH1} gives
\begin{equation}
 \label{propRter:eq4}
 H(Q(aT)) \le (3H(a))^n H(Q) \le (3c_A)^n X^2 \le X^{2+2\epsilon}.
\end{equation}
We claim that we have $\varphi(a,\xi) > \kappa + \epsilon$ for at
least one choice of $a\in A$ and $\xi\in E$.  If we admit this
result, then for such choice of $a$ and $\xi$ the polynomial $S(T) =
Q(aT)$ and the point $\xi$ have all the required properties.  First
of all, Lemma \ref{trans:lemmaH2} shows that $S$ is, like $Q$, a
primary polynomial of $\QT$.  Its degree is at most $n$ and by
\eqref{propRter:eq4} its height at most $X^{2+2\epsilon}$. Finally,
by definition of $\varphi(a,\xi)$, we also have
\[
 \frac{|S(\xi)|}{\|S\|}
 = \frac{|Q(a\xi)|}{\|Q(aT)\|}
 \le X^{\epsilon-\varphi(a,\xi)n}
 \le X^{-\kappa n}.
\]

To prove our claim, we proceed by contradiction assuming on the
contrary that $\varphi$ takes values in $[0,\kappa+\epsilon]$. Let
$a_1$ and $a_2$ be two distinct elements of $A$.  We apply
Proposition \ref{result:propFG} to the polynomials $Q(a_1T)$ and
$Q(a_2T)$ with $s=|E|$ and $t=1$.  Since $Q$ is primary and not a
power of $T$, and since $a_1/a_2 \neq \pm 1$, Lemma
\ref{trans:lemmaH2} shows that these polynomials are relatively
prime in $\QT$. Therefore, the proposition gives
\begin{equation}
 \label{propRter:eq7}
 1
 \le
 c H(Q(a_1T))^n H(Q(a_2T))^n
   \prod_{\xi\in E}
   \max\left\{ \frac{|Q(a_1\xi)|}{\|Q(a_1T)\|},
               \frac{|Q(a_2\xi)|}{\|Q(a_2T)\|} \right\}
\end{equation}
where $c = e^{7n^2} (c_E+2)^{4|E|n} \Delta_E^{-1}$. Using $|E|\le n$
and the hypotheses \eqref{propRter:eqprop1}, we find $c\le
X^{12\epsilon n}$. Substituting this into \eqref{propRter:eq7} and
using \eqref{propRter:eq4} and \eqref{propRter:eq5}, we find
\[
 1
 \le
 X^{12\epsilon n}
 ( X^{2+2\epsilon} )^{2n}
 \prod_{\xi\in E} X^{ \epsilon - n \min\{\varphi(a_1,\xi),\varphi(a_2,\xi)\}},
\]
and therefore, since $|E|\le n$ and $\epsilon \le 1/10$, we finally
obtain
\[
 \sum_{\xi\in E} \min\{\varphi(a_1,\xi),\varphi(a_2,\xi)\}
 \le 6-3\epsilon.
\]
According to Proposition \ref{Zaran:propZ}, this implies that
\[
 \sum_{a\in A} \sum_{\xi\in E} \varphi(a,\xi)
 \le (6-3\epsilon) \binom{|A|}{2} + (\kappa+\epsilon) |E|
 \le (6+\kappa-2\epsilon) |E|
 < 2\kappa |E|,
\]
in contradiction with \eqref{propRter:eq6}.  Therefore $\varphi$
must take at least one value greater than $\kappa+\epsilon$.
\end{proof}

\begin{proof}[Proof of Theorem \ref{intro:thm:prod}]
It suffices to prove the result in the case where $\alpha=1$. We
proceed by contradiction, assuming on the contrary that for each
sufficiently large $n$ there exists a non-zero polynomial
$P_n\in\ZT$ of degree at most $n$ and height at most $\exp(n^\beta)$
satisfying $\prod_{a\in A_n}\prod_{b\in B_n} |P_n(ab\xi)| \le
\exp(-n^{1+\beta+2\mu+\delta})$.  We claim that, for $n$ large
enough, all the hypotheses of Proposition \ref{multxi:propRter} are
satisfied with the choice of $A=A_n$, $E=B_n\xi$, $\epsilon=1/10$,
$X=\exp(n^\beta)$, $P=P_n$ and $\kappa=n^{\delta}$.  First of all
the conditions \eqref{propRter:eqprop1} and \eqref{propRter:eqprop3}
are fulfilled because the prime number theorem shows that the
cardinalities of $A_n$ and $B_n$ behave respectively like
$n^\mu/(\mu\log n)$ and $n^{2\mu}/(2\mu\log n)$, and we have
$2\mu<1<\beta$. Finally the condition \eqref{propRter:eqprop2} is
also satisfied as we have $c_A \le n$, $c_E\le n$ and $\Delta_E\ge
1$ (for $n$ large enough). Therefore, there exist a point $b\in B_n$
and a non-zero primary polynomial $S$ of $\QT$ with
\[
 \deg(S) \le n,
 \quad
 H(S)\le \exp(3n^\beta)
 \et
 \frac{|S(b\xi)|}{\|S\|} \le \exp(-n^{1+\beta+\delta}).
\]
Upon dividing $S$ by its content, we may assume that $S\in\ZT$.
Then, assuming again that $n$ is sufficiently large, we deduce that
the polynomial $Q=S(bT)\in\ZT$ satisfies
\[
 \deg(Q) \le n,
 \quad
 H(Q)\le \exp(n^{\beta+\delta/2})
 \et
 |Q(\xi)| \le \exp(-n^{1+\beta+\delta}),
\]
against Gel'fond's Lemma \ref{gelfond:curve}.
\end{proof}

\begin{proof}[Proof of Theorem \ref{thm:main}, part 3)]
Suppose on the contrary that, for each sufficiently large $n$, the
polynomial $P_n$ satisfies $|P_n^{[j]}(i\xi)| \le \exp(-n^\nu)$ for
each choice of integers $i$ and $j$ with $1\le i\le n^\sigma$ and
$0\le j\le n^\tau$.  By \cite[Prop.~1]{LR1}, we must have
$\sigma>0$. Define
\[
 \ell = [4/\sigma]
 \et
 \delta = (1/5)\big(\nu - 1 - \beta + (3/4)\sigma + \tau \big).
\]
For a given integer $n\ge 1$, let $A=A_n$ be the set of all prime
numbers $p$ with $p \le n^{\sigma/4}$, and let $E=AB\xi$ where
$B=B_n$ is the set of all prime numbers $p$ with $n^{\sigma/4} < p
\le n^{\sigma/2}$. We claim that if $n$ is sufficiently large, all
the hypotheses of Proposition \ref{further:propRbis} are satisfied
with the additional choice of
\[
 t=[(1+n^\tau)/2],
 \quad
 X=\exp(n^\beta),
 \quad
 P=P_n
 \et
 \kappa=n^{4\delta}.
\]
The conditions \eqref{propRbis:eqprop0} are fulfilled because we
have $(3/4)\sigma+\tau<1$ and for large enough values of $n$ the
prime number theorem gives
\[
 n^{\sigma/4}(\log n)^{-1} \le |A| \le n^{\sigma/4}
 \et
 n^{3\sigma/4}(\log n)^{-2} \le |E| \le n^{3\sigma/4}.
\]
The conditions \eqref{propRbis:eqprop1} are also satisfied because
we have $\beta>1$ and for large enough values of $n$ we find $c_A\le
n$, $c_E \le n$ and $\delta_E^{-|E|^2t^2/n} \le \min(1,|\xi|)^{-n}
\le X^\epsilon$ (since $|E|t\le n$). Finally, as the product $AE$ is
contained in $\{\xi,2\xi,\dots,[n^\sigma]\xi\}$, the hypothesis on
$P$ gives
\[
 \max\{|P^{[j]}(ax)| \,;\, a\in A,\, x\in E,\, 0\le j< 2t \}
  \le \exp(-n^\nu)
  \le X^{-\kappa n/(t|E|)},
\]
and so the main condition \eqref{propRbis:eqprop2} is also
satisfied. Consequently, for each sufficiently large value of $n$,
there exists a non-zero polynomial $S\in\ZT$ with
\[
\begin{aligned}
 \deg(S)
 &\le \frac{2n}{t |A|} \le n^{1-\sigma/4-\tau+\delta}, \\
 H(S)
 &\le \exp\left(\frac{4n^\beta}{t |A|}\right)
   \le \exp\big(n^{\beta-\sigma/4-\tau+\delta}\big), \\
 \prod_{a\in A} \prod_{b\in B} |S(ab\xi)|
 &\le \exp\big(-n^{1+\beta-2\tau+3\delta}\big),
\end{aligned}
\]
upon noting, for the last inequality, that any element of $E$ can be
written uniquely as a product $ab\xi$ with $a\in A$ and $b\in B$.
This contradicts Theorem \ref{intro:thm:prod} (with $\mu=\sigma/4$).
\end{proof}

%
%

\section{Higher transcendence degree}
\label{sec:higher}

In this section, we prove the part 6) of Theorem \ref{thm:main} by
combining its part 3) with the following result.

\begin{proposition}
 \label{prop:result:E+F}
Let $n,s \in \bN^*$ with $s\le 2n$, let $E$ and $F$ be finite
subsets of $\bC$ with $0\in E$ and $|F|=s$, and let $P$ be any
non-zero polynomial of $\ZT$ of degree at most $n$.  Put
\[
 \delta_P = \max\{ |P(\xi+\eta)|\,;\, \xi\in E,\, \eta\in F\}.
\]
Then there exists a non-zero polynomial $R\in\ZT$ satisfying
\begin{itemize}
 \item[(i)] $\deg(R) \le n^2$,
 \item[(ii)] $H(R) \le 6^{n^2} H(P)^{2n}$,
 \item[(iii)]
 $\max\big\{|R^{[k]}(\xi)|\,;\, \xi\in E,\, 0\le k\le [s/2]\big\}
  \le c H(P)^{2n} \min(1,\delta_P)^{s/2}$,
\end{itemize}
where $c = \Delta_F^{-1} \big(8^n (1+c_E)^n (1+c_F)^s)^{3n}$, $c_E =
\max_{\xi\in E}|\xi|$ and $c_F = \max_{\eta\in F} |\eta|$.
\end{proposition}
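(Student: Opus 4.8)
The plan is to obtain $R$ by eliminating an auxiliary variable between $P$ and a generic translate of $P$. Concretely, set
\[
 R(T)=\Res_Y\bigl(P(Y),\,P(T+Y)\bigr),
\]
the resultant being taken with respect to $Y$. Both arguments lie in $\bZ[T][Y]$ — the coefficient of $Y^i$ in $P(T+Y)$ is the divided derivative $P^{[i]}(T)\in\bZ[T]$ — so the Sylvester determinant defining $R$ lies in $\bZ[T]$, and $R\not\equiv 0$ since $P(Y)$ and $P(T+Y)$ are coprime over $\bQ(T)$. Dividing $P$ by its content only shrinks $\delta_P$ while leaving $\deg P$ and $H(P)$ unchanged, so I may assume $P$ primitive, i.e.\ $\|P\|=H(P)$.

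Parts (i) and (ii) are then routine. The two arguments have $Y$-degree $\deg P\le n$, so $\deg_T R\le(\deg P)^2\le n^2$. The Sylvester matrix has size $2\deg P\le 2n$ and its entries are either coefficients of $P$ (of absolute value $\le H(P)$) or the polynomials $P^{[i]}(T)$ (of norm $\le 2^{\deg P}H(P)$); expanding the determinant gives $H(R)\le\|R\|\le(2\deg P)!\,2^{(\deg P)^2}H(P)^{2\deg P}\le 6^{n^2}H(P)^{2n}$.

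For (iii) I use the multiplicative form $R(T)=\pm\,\mathrm{lc}(P)^{\deg P}\prod_{P(\rho)=0}P(T+\rho)$, which is visible on the roots of $P$. Fix $\xi\in E$. If $P(Y)$ and $P(\xi+Y)$ have a common factor in $\bC[Y]$, then $R(\xi)=0$ and there is nothing to prove for $k=0$. Otherwise they are coprime, and here one uses that $0\in E$ gives $|P(\eta)|\le\delta_P$ for all $\eta\in F$, while $\xi\in E$ gives $|P(\xi+\eta)|\le\delta_P$ for all $\eta\in F$. Feeding the coprime pair $P(Y),\,P(\xi+Y)$, the evaluation set $F$, and $t=1$ into the resultant estimate of Lemma~\ref{result:lemmaResAB} (valid when $\deg P\ge s/2$; otherwise $P$ vanishes approximately at more points than its degree, forcing $\delta_P$ bounded below, and (iii) follows from the crude bound $|R^{[k]}(\xi)|\le 2^{\deg R}(1+c_E)^{\deg R}\|R\|$) bounds $|R(\xi)|=\bigl|\Res_Y(P(Y),P(\xi+Y))\bigr|$ by $\Delta_F^{-1}(e(2+c_F))^{\deg P\cdot s}\,\|P\|^{\deg P}\|P(\xi+\cdot)\|^{\deg P}\,\delta_P^{\,s}$; using $\|P(\xi+\cdot)\|\le(1+c_E)^{\deg P}\|P\|$ and (as we may assume) $\delta_P\le 1$, this is $\le c\,H(P)^{2n}\delta_P^{\,s}\le c\,H(P)^{2n}\delta_P^{\,s/2}$ — the case $k=0$. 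For $1\le k\le[s/2]$ I would exploit $R(\xi+Y)/R(\xi)=\prod_{\rho}P(\xi+\rho+Y)/P(\xi+\rho)$: the coefficient of $Y^k$ on the right is a sum of products of at most $k$ of the quotients $P^{[l]}(\xi+\rho)/P(\xi+\rho)$, so $R^{[k]}(\xi)$ is a sum of terms in each of which at least $\deg P-k\ge\deg P-[s/2]$ of the factors $P(\xi+\rho)$ of $R(\xi)$ survive untouched while at most $k$ of them are replaced by $\mathrm{lc}(P)\,P^{[l]}(\xi+\rho)$; bounding the surviving partial products by a truncated version of the resultant estimate above, and the altered factors together with a few elementary factors crudely, one again arrives at $|R^{[k]}(\xi)|\le c\,H(P)^{2n}\delta_P^{\,s/2}$.

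The step I expect to be the main obstacle is this last one: one must show that ``erasing'' up to $[s/2]$ of the shifted evaluations of $P$ still leaves a product small enough to beat $H(P)^{2n}$ — equivalently, that the near-common-zero of $P(Y)$ and $P(\xi+Y)$ responsible for the smallness of $R(\xi)$ is robust under differentiation — and must do so while keeping the exponent of $H(P)$ at exactly $2n$ (which is precisely why the resultant estimate, giving $\|P\|^{2\deg P}$, must be used rather than a crude expansion of $R$ over its $(\deg P)^2$ roots) and the exponent of $\delta_P$ at $s/2$ (using $k\le[s/2]$ and $s\le 2n$), absorbing all remaining factors — $(1+c_E)^{(\deg P)^2}$, $(e(2+c_F))^{O(ns)}$, $(2n)!$ — into the single constant $c=\Delta_F^{-1}\bigl(8^n(1+c_E)^n(1+c_F)^s\bigr)^{3n}$.
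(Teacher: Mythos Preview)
Your construction of $R$ as $\Res_Y(P(Y),P(T+Y))$ is exactly the paper's, and your handling of (i), (ii) and of the case $k=0$ in (iii) via Lemma~\ref{result:lemmaResAB} is correct in outline. The genuine gap is the case $1\le k\le[s/2]$.

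The trouble is with your ``truncated resultant estimate''. When you expand $R^{[k]}(\xi)$ through the factorisation $R(T)=\pm\mathrm{lc}(P)^{d}\prod_\rho P(T+\rho)$, each summand involves a ``surviving'' product $\mathrm{lc}(P)^{d}\prod_{\rho\notin S}P(\xi+\rho)$ over all but $|S|\le k$ roots of $P$, together with $|S|$ ``altered'' factors $P^{[k_\rho]}(\xi+\rho)$. Neither piece is under control. The surviving product is $\mathrm{lc}(P)^{|S|}$ times the resultant of the monic factor $\prod_{\rho\notin S}(Y-\rho)$ against $P(\xi+Y)$; that monic factor need not lie in $\bZ[Y]$ (or even $\bQ[Y]$) and has no norm bound usable in Lemma~\ref{result:lemmaResAB}, so there is no ``truncated'' version of that lemma to invoke. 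Nor can you divide $R(\xi)$ by the missing factors, since individual $P(\xi+\rho)$ may vanish or be tiny. The altered factors are worse: bounding $|P^{[k_\rho]}(\xi+\rho)|$ crudely brings in $(1+|\rho|)^n$, and the root bound $|\rho|\le 1+H(P)$ inflates each altered factor to size $H(P)^{O(n)}$; with up to $k\le n$ of them you overshoot $H(P)^{2n}$ by a factor $H(P)^{O(n^2)}$.

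What the paper does instead is to avoid the root factorisation entirely and keep the Sylvester structure \emph{as a function of the free variable}. Using Lemma~\ref{result:lemmaDet} with $Q=1$, $t=1$, $L=\bQ(U)$ and the $s$ points of $F$, one writes
\[
 R(U)=\pm\,\Delta^{-1}\det\bigl(C_1(U),\dots,C_{2n}(U)\bigr),
\]
where the first $s$ columns $C_j(U)$ carry precisely the values $P(\eta_j)$ and $P(U+\eta_j)$ (the remaining columns carry coefficients of $P$ and $P(T+U)$). Differentiating the determinant column by column and evaluating at $U=\xi$, every undifferentiated column among the first $s$ has $\ell_1$-norm $\le 2(1+c_F)^n\delta_P$ because both $\eta_j$ and $\xi+\eta_j$ lie in $E+F$; since at most $k\le[s/2]$ columns are differentiated, at least $s/2$ such columns survive, yielding the factor $\delta_P^{s/2}$ directly. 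The differentiated columns and the last $2n-s$ columns are bounded by $2^{O(n)}(1+c_E)^n(1+c_F)^{O(n)}H(P)$ each, which accounts for the $H(P)^{2n}$ and the constant $c$. The point is that in this determinant representation the smallness is attached to \emph{columns indexed by the points of $F$}, not to roots of $P$, and removing up to $[s/2]$ of them by differentiation is harmless.
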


\begin{proof}
We may assume without loss of generality that $P$ is primitive.
Suppose first that its degree is $n$.  We claim that the resultant
$R(U)$ of $P(T)$ and $P(T+U)$ with respect to $T$ has the required
properties as a polynomial in the new variable $U$.  Since $P(T)$
and $P(T+U)$ are relatively prime elements of $\bZ[T,U]$, we know
that $R(U)$ is a non-zero polynomial of $\bZ[U]$.  To prove the
estimates (i), (ii) and (iii), we apply Lemma \ref{result:lemmaDet}
with $L=\bQ(U)$, $m=2n$, $t=1$, $Q=1$, the role of $\xi_1, \dots,
\xi_s$ played by the points $\eta_1, \dots, \eta_s$ of $F$, and the
sequence of polynomials $P_1, \dots, P_m$ given by
\begin{equation}
 \label{E+F:seqP}
 P(T),\,TP(T),\dots,T^{n-1}P(T),\,P(T+U),\,TP(T+U),\dots,T^{n-1}P(T+U).
\end{equation}
In the notation of Lemma \ref{result:lemmaDet}, this gives
\begin{equation}
 \label{E+F:eqR}
 R(U) = \det(\psi(P_1),\dots,\psi(P_m))
      = \pm \Delta^{-1} \det(\varphi(P_1),\dots,\varphi(P_m))
\end{equation}
where $\Delta=\prod_{1\le i< j\le s} (\eta_j-\eta_i)$.

To perform the required estimations, we use the following additional
notation.  For each polynomial $G$ in $\bC[U]$ or $\bC[T,U]$, we
denote by $\|G\|_1$ the sum of the absolute values of its
coefficients (its length).  For a row vector $G =(G_1, \dots, G_m)
\in \bC[U]^m$, we denote by $\deg(G)$ the maximum of the degrees of
$G_1,\dots,G_m$ and we put $\|G\|_1 = \|G_1\|_1 + \cdots +
\|G_m\|_1$.  We also define $G^{[k]} = (G_1^{[k]}, \dots,
G_m^{[k]})$ for each integer $k\ge 0$.  We use the same notation for
column vectors.

By definition, $\psi$ maps a polynomial $G\in\bC[T,U]$ with
$\deg_T(G)<m$ to the row vector $\psi(G)\in\bC[U]^m$ formed by its
coefficients as a polynomial in $T$ over the ring $\bC[U]$.  Thus we
have $\deg(\psi(G))=\deg_U(G)$ and $\|\psi(G)\|_1=\|G\|_1$. Applying
this to the representation of $R(U)$ given by \eqref{E+F:eqR} in
terms of $\psi$, we obtain
\begin{align*}
 \deg(R)
  &\le \sum_{i=1}^m \deg \psi(P_i)
    = n \deg_U(P(T+U)) = n^2,\\
 \|R\|_1
  &\le \prod_{i=1}^m \|\psi(P_i)\|_1
    = \|P(T)\|_1^n\, \|P(T+U)\|_1^n
    \le 6^{n^2}H(P)^n,
\end{align*}
where the last step uses the crude estimates $\|P(T)\|_1 \le \|P\|\,
\|(1+T)^n\|_1 = 2^nH(P)$ and similarly $\|P(T+U)\|_1 \le \|P\|\,
\|(1+T+U)^n\|_1 = 3^nH(P)$.  This proves (i) and (ii).

For $j=1,\dots,m$, let $C_j(U)$ denote the $j$-th column of the
$m\times m$ matrix with rows $\varphi(P_1), \dots, \varphi(P_m)$. By
virtue of \eqref{E+F:eqR}, we have $ R(U) = \pm \Delta^{-1}
\det(C_1(U), \dots, C_m(U)) $. Using the multi-linearity of the
resultant, we deduce that for each integer $k\ge 0$ we have
\begin{equation}
 \label{E+F:R[k]}
 R^{[k]}(U)
 = \pm \Delta^{-1}
   \sum_{k_1+\cdots+k_m=k} \det(C_1^{[k_1]}(U), \dots, C_m^{[k_m]}(U))
\end{equation}
where the sum runs through all partitions of $k$ into a sum of $m$
non-negative integers $k_1,\dots,k_m$.

For $j=1,\dots,s$, the transpose of $C_j(U)$ is the row vector
formed by the values at $\eta_j$ of the sequence of polynomials
\eqref{E+F:seqP}:
\[
 {}^tC_j(U)
 = \big( P(\eta_j), \dots, \eta_j^{n-1}P(\eta_j),\,
         P(U+\eta_j), \dots, \eta_j^{n-1}P(U+\eta_j) \big).
\]
For $\xi\in E$, this gives
\begin{equation}
 \label{E+F:C1..s}
 \|C_j(\xi)\|_1
 \le 2(1+|\eta_j|)^n\max(|P(\eta_j)|,|P(\xi+\eta_j)|)
 \le 2(1+c_F)^n\delta_P,
\end{equation}
since both $\eta_j$ and $\xi+\eta_j$ belong to $E+F$ (as $0\in E$).
For each integer $k\ge 1$, we also find
\[
 {}^tC_j^{[k]}(\xi)
 = \big( 0, \dots, 0,\,
         P^{[k]}(\xi+\eta_j), \dots, \eta_j^{n-1}P^{[k]}(\xi+\eta_j)
         \big),
\]
and therefore
\begin{equation}
 \label{E+F:C[k]1..s}
 \begin{aligned}
 \|C_j^{[k]}(\xi)\|_1
 &\le (1+|\eta_j|)^n |P^{[k]}(\xi+\eta_j)| \\
 &\le (1+|\eta_j|)^n (1+|\xi|+|\eta_j|)^n \|P^{[k]}\| \\
 &\le 2^n (1+c_E)^n (1+c_F)^{2n} H(P).
 \end{aligned}
\end{equation}

For $j=s+1,\dots,2n$, the transpose of $C_j(U)$ is the row vector
made of the coefficients of $T^{j-1}$ from the polynomials of the
sequence \eqref{E+F:seqP}.  It is given by
\[
 {}^tC_j(U)
 = \big( P^{[j-1]}(0), \dots, P^{[j-n]}(0),\,
         P^{[j-1]}(U), \dots, P^{[j-n]}(U) \big),
\]
with the convention that $P^{[i]}=0$ when $i<0$.  For each integer
$k\ge 1$, this gives
\[
 {}^tC_j^{[k]}(U)
 = \left( 0, \dots, 0,\,
         \binom{j+k-1}{j-1} P^{[j+k-1]}(U), \dots,
         \binom{j+k-n}{j-n} P^{[j+k-n]}(U) \right),
\]
with the additional convention that the binomial symbol is zero when
its lower entry is negative.  From this we deduce that, for each
$k\ge 0$ and each $\xi\in E$, we have
\begin{equation}
 \label{E+F:C[k]s+1..m}
 \|C_j^{[k]}(\xi)\|_1
 \le n 2^n \max_{0\le i\le n} \max( |P^{[i]}(0)|,\,|P^{[i]}(\xi)| )
 \le 2^{3n} (1+c_E)^n H(P).
\end{equation}

For each integer $k$ with $0\le k\le s/2$  and each partition of $k$
as a sum of non-negative integers $k_1,\dots,k_m$, there are always
at least $s/2$ indices $i$ with $1\le i\le s$ for which $k_i=0$.
Thus, for such $k$ and any $\xi\in E$, the formula \eqref{E+F:R[k]}
combined with \eqref{E+F:C1..s}, \eqref{E+F:C[k]1..s} and
\eqref{E+F:C[k]s+1..m} gives
\begin{align*}
 |R^{[k]}(\xi)|
 &\le \Delta_F^{-1}
      \binom{k+m-1}{m-1}
      \max_{k_1+\cdots+k_m=k} \prod_{j=1}^m \|C_j^{[k_j]}(\xi)\|_1 \\
 &\le \Delta_F^{-1} 2^{3n}
      \big( 2^{3n}(1+c_E)^n \big)^{2n}
      (1+c_F)^{2ns} H(P)^{2n} \min(1,\delta_P)^{s/2}.
\end{align*}
This proves (iii) with $c$ replaced by $c'=\Delta_F^{-1}  2^{9n^2}
(1+c_E)^{2n^2} (1+c_F)^{2ns}$.

In the general case where $P$ has degree $d\le n$, we apply the
preceding estimates to $\tP(T) = T^{n-d}P(T)$.  Since $\tP$ has
degree $n$, same height as $P$, and since it satisfies
\[
 |\tP(\xi+\eta)|
 \le \max(1,|\xi|+|\eta|)^n \delta_P
 \le (1+c_E)^n (1+c_F)^n \delta_P
\]
for any $\xi\in E$ and $\eta\in F$, we conclude that the
corresponding polynomial $R$ satisfies (i), (ii) and (iii) with the
given value of $c$.
\end{proof}

\begin{proof}[Proof of Theorem \ref{thm:main}, part 6)]
Suppose on the contrary that for each sufficiently large $n$, the
polynomial $P_n$ satisfies $|P_n(i\xi+\eta)| \le \exp(-n^\nu)$ for
$i=0,1,\dots,[n^\sigma]$.  If $\sigma=0$, it follows from Lemma
\ref{gelfond:curve} that both $\eta$ and $\xi+\eta$ are algebraic
over $\bQ$. This is impossible since $\xi$ is transcendental over
$\bQ$.  Thus, we have $\sigma>0$ and so there exists $\delta>0$ such
that $\sigma>\delta$ and $\nu> 3 + \beta - (11/4)\sigma + 5\delta$.
We apply Proposition \ref{prop:result:E+F} with $n$ replaced by
$[\sqrt{n}]$,
\[
 P=P_{[\sqrt{n}]}, \quad
 E = \big\{ i\xi\,;\, 0\le i\le 2n^{(\sigma-\delta)/2} \big\}
 \et
 F = E+\eta.
\]
For $n$ sufficiently large, we have $\max(c_E,c_F) \le
n^{\sigma/2}$, $\Delta_F\ge 1$, $2n^{(\sigma-\delta)/2} \le |E| =
|F| \le [\sqrt{n}]$, and $\max\{|P(x+y)|\,;\, x\in E,\, y\in F\}\le
\exp(-(1/2)n^{\nu/2})$.  So, there exists a non-zero polynomial
$R\in\ZT$ with $\deg(R)\le n$, $H(R)\le
\exp(n^{(1+\beta+\delta)/2})$ and
\[
 \max\big\{|R^{[j]}(i\xi)| \,;\,
   0\le i,j\le n^{(\sigma-\delta)/2} \big\}
 \le \exp(-n^{(\nu+\sigma-2\delta)/2}).
\]
This contradicts Theorem \ref{thm:main}, part 3).
\end{proof}

\appendix
\section{Construction of polynomials with given properties}
\label{sec:appendix1}

The following result derives from a simple application of Dirichlet
box principle.

\begin{proposition}
 \label{prop:Dirichlet}
Let $m\in\bN^*$, let $\xi_1,\dots,\xi_m\in\bC$, and let $\beta$,
$\sigma_1, \dots, \sigma_m$, $\tau$, $\nu$ be positive real numbers
with $\sigma_1+\cdots+\sigma_m+\tau<1$ and
$1<\nu<1+\beta-\sigma_1-\cdots-\sigma_m-\tau$.  For each
sufficiently large integer $n\ge 1$, there exists a non-zero
polynomial $P_n\in\ZT$ of degree at most $n$ and height at most
$\exp(n^\beta)$ satisfying $|P_n^{[j]}(i_1\xi_1+\cdots+i_m\xi_m)|\le
\exp(-n^\nu)$ for any choice of integers $i_1,\dots,i_m$ and $j$
with $0\le i_1\le n^{\sigma_1}$, \dots, $0\le i_m\le n^{\sigma_m}$
and $0\le j \le n^\tau$.
\end{proposition}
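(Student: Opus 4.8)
The plan is to construct $P_n$ directly, by the classical Dirichlet box principle (the Thue--Siegel construction of an auxiliary polynomial). Write $\theta=\sigma_1+\cdots+\sigma_m+\tau$, so that by hypothesis $\theta<1$ and $1<\nu<1+\beta-\theta$; note that together these force $\theta<\beta$. Put $M=1+\max_k|\xi_k|$ and $\sigma=\max_k\sigma_k$. A candidate $P=\sum_{h=0}^n a_hT^h\in\ZT$ of degree at most $n$ is determined by its $N:=n+1$ integer coefficients. For each tuple of integers $(i_1,\dots,i_m,j)$ with $0\le i_k\le n^{\sigma_k}$ and $0\le j\le n^\tau$, writing $y=i_1\xi_1+\cdots+i_m\xi_m$, the value
\[
 P^{[j]}(y)=\sum_{h=j}^{n}\binom{h}{j}y^{h-j}a_h
\]
is a $\bC$-linear form in $a_0,\dots,a_n$ whose coefficients are bounded in absolute value by some quantity $A$ with $\log A=O(n\log n)$, since $|y|\le mMn^{\sigma}$ for $n\ge 1$. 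Splitting each such form into its real and imaginary parts turns this family into a family of $2D$ real linear forms in the integer unknowns $a_0,\dots,a_n$, with real coefficients of absolute value at most $A$, where $D$ is the number of admissible tuples; for $n\ge 2$ one has $D\le 2^{m+1}n^{\theta}$.

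Then I would run the pigeonhole step. Put $H=\lfloor\exp(n^\beta)\rfloor$ and $\eta=\exp(-n^\nu)/\sqrt2$, and let each $a_h$ range independently over $\{0,1,\dots,H\}$. As the coefficient vector varies over these $(H+1)^N$ choices, the vector formed by the $2D$ real linear forms stays in the cube $[-NAH,NAH]^{2D}$, which is covered by at most $(4NAH/\eta)^{2D}$ sub-cubes of side $\eta$. Hence, if
\[
 (H+1)^N>(4NAH/\eta)^{2D},
\]
two distinct coefficient vectors fall in the same sub-cube, and their difference is a non-zero point $\mathbf a\in\bZ^N$ with all coordinates in $[-H,H]$ on which every real linear form has absolute value at most $\eta$. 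The polynomial $P_n$ with coefficient vector $\mathbf a$ then has degree at most $n$, height at most $H\le\exp(n^\beta)$, and satisfies $|P_n^{[j]}(y)|\le\sqrt2\,\eta=\exp(-n^\nu)$ for every admissible $(i_1,\dots,i_m,j)$.

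It remains to check the displayed inequality for all large $n$, and this is precisely where the hypotheses on the exponents enter. Taking logarithms, it suffices to have
\[
 N\log(H+1)>2D\bigl(\log(4NA)+\log H-\log\eta\bigr).
\]
The left-hand side is at least $(n+1)n^\beta\ge n^{1+\beta}$, since $H+1\ge\exp(n^\beta)$. On the right-hand side, $\log(4NA)=O(n\log n)$, $\log H\le n^\beta$, and $-\log\eta\le 2n^\nu$ for $n$ large, so the whole right-hand side is $O\!\bigl(n^{\theta}\max(n\log n,\ n^\beta,\ n^\nu)\bigr)$. Comparing exponents term by term: $n^{\theta}\cdot n^\beta\ll n^{1+\beta}$ because $\theta<1$; $n^{\theta}\cdot n\log n=n^{1+\theta}\log n\ll n^{1+\beta}$ because $\theta<\beta$; and $n^{\theta}\cdot n^\nu\ll n^{1+\beta}$ because $\nu<1+\beta-\theta$. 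Hence the inequality holds with a fixed positive power of $n$ to spare once $n$ is large enough, and $P_n$ has all the required properties.

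There is no genuine obstacle here: this is simply the standard auxiliary-polynomial construction, and the only points demanding care are (a) the bookkeeping of the bound $A$ on the coefficients of the linear forms, so that $2D\log A$ stays below $n^{1+\beta}$, which is exactly where $\theta<\beta$ is used, and (b) the observation that the final counting inequality reduces precisely to the two hypotheses $\theta<1$ and $\nu<1+\beta-\theta$. The complex values of the $P_n^{[j]}(y)$ are handled routinely by splitting into real and imaginary parts, which only doubles $D$ and is harmless since $\theta<1$.
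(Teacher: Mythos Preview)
Your proof is correct and follows essentially the same approach as the paper's own proof: both set up the values $P_n^{[j]}(i_1\xi_1+\cdots+i_m\xi_m)$ as a system of at most $O(n^\theta)$ complex linear forms in the $n+1$ unknown integer coefficients, bound the coefficients of these forms by a quantity whose logarithm is $o(n^{1+\beta-\theta})$, and conclude by the Dirichlet box (Thue--Siegel) principle. The paper condenses the argument to three lines by invoking a ready-made version of the Thue--Siegel lemma (Waldschmidt, \emph{Diophantine approximation on linear algebraic groups}, Lemma~4.12), whereas you carry out the pigeonhole count explicitly; the substance is the same.
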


\begin{proof}
For each sufficiently large integer $n$, the conditions imposed on
the polynomial $P_n$ constitute a system of at most
$2n^{\sigma_1+\cdots+\sigma_m+\tau}$ linear inequations in its $n+1$
unknown coefficients, each having itself complex coefficients of
absolute value at most $2^n ( n^{\sigma_1}|\xi_1| + \cdots +
n^{\sigma_m}|\xi_m| )^n \le \exp(n^\nu)$. The conclusion follows by
applying a generic version of Thue-Siegel lemma like \cite[Lemma
4.12]{Wa2}.
\end{proof}

In the case where $\sigma_1=\cdots=\sigma_m=\sigma$, the main
condition on the parameter $\nu$ in Proposition \ref{prop:Dirichlet}
becomes $\nu < 1+\beta-m\sigma-\tau$.  The next proposition shows
that in some instances, for $m\ge 3$, the weaker condition $\nu <
1+\beta-2\sigma-\tau$ suffices even for a set of $\bQ$-linearly
independent points $\xi_1,\dots,\xi_m$.

\begin{proposition}
  \label{prop:Khintchine}
Let $m\in\bN^*$ with $m\ge 3$ and let $\beta$, $\sigma$, $\tau$,
$\nu$ be positive real numbers with $2\sigma+\tau<1$ and
$1<\nu<1+\beta-2\sigma-\tau$.  There exist $\bQ$-linearly
independent complex numbers $\xi_1,\dots,\xi_m$ with $\xi_1=1$,
which satisfy the following property.  For each sufficiently large
integer $n\ge 1$, there exists a non-zero polynomial $P_n\in\ZT$ of
degree at most $n$ and height at most $\exp(n^\beta)$ satisfying
$|P_n^{[j]}(i_1\xi_1+\cdots+i_m\xi_m)|\le \exp(-n^\nu)$ for any
choice of integers $i_1,\dots,i_m$ and $j$ with $0\le
i_1,\dots,i_m\le n^{\sigma}$ and $0\le j \le n^\tau$.
\end{proposition}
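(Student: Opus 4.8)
The plan is to choose $\xi_1=1,\xi_2,\dots,\xi_m$ by a recursive Diophantine construction that makes the evaluation points collapse, at every scale, onto a two‑parameter family, and then to produce $P_n$ by Dirichlet's box principle applied to that small family.

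\textbf{The heuristic.} If $\xi_3,\dots,\xi_m$ were all equal to $\xi_2$, then $i_1\xi_1+\dots+i_m\xi_m=i_1+(i_2+\dots+i_m)\xi_2$, so the $(1+\lfloor n^\sigma\rfloor)^m$ evaluation points would lie in the set $E_n=\{a+b\xi_2 : 0\le a\le n^\sigma,\ 0\le b\le mn^\sigma\}$ of cardinality $O(n^{2\sigma})$. Imposing $|P^{[j]}(y)|\le\exp(-n^{\nu''})$ for all $y\in E_n$ and all $0\le j\le 2n^\tau$ is then a system of $O(n^{2\sigma+\tau})$ linear inequations in the $n+1$ coefficients of a polynomial $P\in\ZT$, with coefficients of size $\exp(O(n\log n))$; since $2\sigma+\tau<1$, the argument of Proposition \ref{prop:Dirichlet} (a generic Thue--Siegel lemma, e.g.\ \cite[Lemma~4.12]{Wa2}) yields such a $P=P_n$ of degree $\le n$ and height $\le\exp(n^\beta)$ with $\nu''$ any number $<1+\beta-2\sigma-\tau$. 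Pick $\nu<\nu''<1+\beta-2\sigma-\tau$. Given a true evaluation point $x$, write $x=y+e$ with $y$ the point of $E_n$ closest to $x$; the Taylor expansion $P_n^{[j]}(x)=\sum_{h\ge 0}\binom{j+h}{h}e^h P_n^{[j+h]}(y)$ is controlled, for $0\le j\le n^\tau$, by splitting at $j+h=2n^\tau$: the head uses $|P_n^{[j+h]}(y)|\le\exp(-n^{\nu''})$, the tail uses the crude bounds $\|P_n^{[i]}\|\le 2^n\exp(n^\beta)$ and $|y|\le Cn^\sigma$, and one gets $|P_n^{[j]}(x)|\le\exp(-n^\nu)$ provided $|e|\le\exp(-n^\gamma)$ with $\gamma:=\max(\nu,\beta,1)-\tau>0$. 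So the scale $n$ is handled as soon as all the $\xi_k$ stay within $\exp(-n^\gamma)/(mn^\sigma)$ of a common point at that scale.

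\textbf{The construction.} Since $1,\xi_2,\dots,\xi_m$ must be $\bQ$‑linearly independent, the $\xi_k$ cannot literally coincide, and no fixed choice can stay that close at every scale; I would therefore build them as limits $\xi_k=\lim_\ell\xi_k^{(\ell)}$. Divide $\bN$ into consecutive blocks $[N_\ell,N_{\ell+1})$. At stage $\ell$ fix approximations with $\xi_3^{(\ell)},\dots,\xi_m^{(\ell)}$ within a microscopic $\rho_\ell$ of $\xi_2^{(\ell)}$, and carry out the construction above for every $n$ in the block, using the fully degenerate centres $\{a+b\xi_2^{(\ell)}\}$; then pass to stage $\ell+1$ by a perturbation so small that (i) every block already treated remains valid against the \emph{final} $\xi_k$ --- concretely $\rho_\ell+\sum_{j>\ell}(\text{later perturbation sizes})$ must beat $\exp(-N_{\ell+1}^\gamma)/(mN_{\ell+1}^\sigma)$, which is arranged by fixing $N_{\ell+1}$ before choosing the later tolerances --- and (ii) the limiting numbers avoid the countably many hyperplanes expressing $\bQ$‑linear dependence, which is possible because at each stage only a measure‑zero set of choices is forbidden (it suffices, say, that $\xi_2$ and the accumulated shifts $\xi_3-\xi_2,\dots,\xi_m-\xi_2$ together with $1$ be $\bQ$‑linearly independent).

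\textbf{Main obstacle.} The real difficulty is exactly the tension just described: the evaluation points at a fixed scale $n$ do not move once $n$ is fixed, so the degeneracy used for $P_n$ must already be good enough against the \emph{true}, distinct $\xi_k$; reconciling this with genuine linear independence forces the interleaving of the blocks $[N_\ell,N_{\ell+1})$, the tolerances and the perturbations to be done with great care, and it may well be necessary to replace the naive ``collapse onto one point'' by a more flexible collapse onto a two‑dimensional sublattice governed by several synchronized small integer relations among the $i$‑vectors. Making this bookkeeping rigorous --- and checking that the linear‑independence conditions survive the limit --- is where essentially all the work lies; by contrast the box‑principle estimate and the Taylor transfer in the first paragraph are routine.
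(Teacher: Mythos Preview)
Your high-level strategy---collapse the $m$-parameter family of evaluation points onto an essentially two-parameter family, construct $P_n$ by Dirichlet on that family, then transfer by Taylor---matches the paper's, and the Dirichlet/Taylor part is fine. The gap is in the construction of the $\xi_k$. Your block scheme, as written, cannot produce $\bQ$-linearly independent numbers. For the polynomial built at stage $\ell$ with centres $\{a+b\xi_2^{(\ell)}\}$ to handle every $n$ in $[N_\ell,N_{\ell+1})$ against the \emph{final} $\xi_k$, you need $|\xi_k-\xi_2^{(\ell)}|$ bounded by roughly $\exp(-N_{\ell+1}^\gamma)/(mN_{\ell+1}^\sigma)$ for each $k\ge 2$. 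Since this must hold for every $\ell$, it gives $|\xi_2-\xi_3|\le 2\exp(-N_{\ell+1}^\gamma)/(mN_{\ell+1}^\sigma)\to 0$, hence $\xi_2=\xi_3$. No interleaving of tolerances rescues this: forcing each $\xi_k$ to be approximable to arbitrary precision by a \emph{single} moving target $\xi_2^{(\ell)}$ makes all the $\xi_k$ coincide in the limit. You sense this in your final paragraph (``a more flexible collapse \dots\ governed by several synchronized small integer relations''), but that phrase is not a side remark---it is the entire content of the construction, and you have not supplied it.

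The paper provides exactly this missing ingredient by invoking a theorem of Khintchine (in a form used by Philippon): one can choose $\bQ$-linearly independent $\xi_1=1,\xi_2,\dots,\xi_m$ such that for each $k\ge 3$ and each $n$ there are integers $a_{k,n},b_{k,n},c_{k,n}$ with $\max(|a_{k,n}|,|b_{k,n}|)\le |c_{k,n}|\le n^\delta$ and $|a_{k,n}+b_{k,n}\xi_2+c_{k,n}\xi_k|\le\exp(-n^{\nu+\delta})$. The slowly growing denominator $c_{k,n}$ is precisely what reconciles the collapse with linear independence: writing $i_k=q_kc_{k,n}+i_k'$ with $0\le i_k'<|c_{k,n}|\le n^\delta$, the term $q_kc_{k,n}\xi_k$ is absorbed (up to a tiny error) into the $\bZ+\bZ\xi_2$ part, so every evaluation point lies within $\exp(-n^{\nu+\delta})$ of some $i_1'\xi_1+i_2'\xi_2+\sum_{k\ge 3}i_k'\xi_k$ with $|i_1'|,|i_2'|\le mn^\sigma$ and $0\le i_k'\le n^\delta$. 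One then applies Proposition~\ref{prop:Dirichlet} to this set of size $O(n^{2\sigma+m\delta+\tau})$, and a one-line mean-value transfer (rather than your Taylor split) finishes the argument.
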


As the proof will show, these examples are ruled out if we assume
that $\xi_1,\dots,\xi_m$ satisfy an appropriate measure of linear
independence over $\bZ$.

\begin{proof}
Without loss of generality, we may assume that $\nu>\beta$.  Choose
$\delta>0$ such that $\delta<\sigma$, $m\delta+2\sigma+\tau < 1$ and
$\nu+\delta < 1+\beta-m\delta-2\sigma-\tau$.  A simple adaptation of
the argument of P.~Philippon in the appendix of \cite{Ph} (based on
a result of Khintchine \cite{K}) provides $\bQ$-linearly independent
complex numbers $\xi_1=1,\xi_2,\dots,\xi_m$ with the property that,
for each integer $n\ge 1$ and each $k=3,\dots,m$, there exist
integers $a_{k,n}$, $b_{k,n}$ and $c_{k,n}$ with
\[
 \max(|a_{k,n}|,|b_{k,n}|) \le |c_{k,n}| \le n^\delta
 \et
 |a_{k,n}+b_{k,n}\xi_2+c_{k,n}\xi_k| \le \exp(-n^{\nu+\delta})
\]
(the choice of the function $\exp(-n^{\nu+\delta})$ is adapted to
our purpose, but any positive valued function of $n\in\bN$ would
work as well; the only new requirement is the condition
$\max(|a_{k,n}|,|b_{k,n}|) \le |c_{k,n}|$ which is easily
fulfilled). By Proposition \ref{prop:Dirichlet}, for each $n$
sufficiently large, there exists a non-zero polynomial $P_n\in\ZT$
of degree at most $n$ and height at most $\exp(n^\beta)$ such that
$|P_n^{[j]}(i_1\xi_1+\cdots+i_m\xi_m)| \le \exp(-n^{\nu+\delta})$
for any choice of integers $i_1,\dots,i_m$ and $j$ with $0\le
i_1,i_2\le n^{\sigma+\delta}$, $0\le i_3,\dots,i_m\le n^{\delta}$
and $0\le j \le n^\tau$.  We claim that this sequence of polynomials
has the required property.  To show this, choose integers $n$,
$i_1,\dots,i_m$ and $j$ with $n\ge 1$, $0\le i_1,\dots,i_m\le
n^{\sigma}$ and $0\le j \le n^\tau$.  After division, one can write
the point $\xi=i_1\xi_1+\cdots+i_m\xi_m$ as a sum $\xi=\xi'+\eta$
where $\xi'=i_1'\xi_1+\cdots+i_m'\xi_m$ and $\eta=\sum_{k=3}^m q_k
(a_{k,n}+b_{k,n}\xi_2+c_{k,n}\xi_k)$ for integers $i'_k$ and $q_k$
satisfying $|i'_k|\le mn^\sigma$ for $k=1,2$, $|i'_k|\le n^\delta$
for $k=3,\dots,m$, and $0\le q_k\le n^{\sigma}$ for $k=3,\dots,m$.
For $n$ large enough, this gives
\begin{align*}
 \big|P_n^{[j]}(\xi)\big|
 &\le
 \big|P_n^{[j]}(\xi')\big| + |\eta|\, \|P_n^{[j]}\|
 (1+|\xi|+|\xi'|)^n \\
 &\le \exp(-n^{\nu+\delta}) +
 (mn^\sigma\exp(-n^{\nu+\delta}))(2^n\exp(n^\beta)) (4mn^\sigma)^n\\
 &\le \exp(-n^\nu).
\end{align*}
\end{proof}

\section{A note on intersection estimates}
\label{sec:appendix2}

We prove the following result as a complement to the estimates of
\S\ref{sec:inter}.

\begin{proposition}
Let $s$ be a positive integer and let $F$ be a non-empty finite
subset of $\bZ^s$. Define $E
=F\cap(F-\ue_1)\cap\cdots\cap(F-\ue_s)$, where $(\ue_1,\dots,\ue_s)$
denote the canonical basis of $\bZ^s$. Then we have $|E| \le |F|-
|F|^{(s-1)/s}$.
\end{proposition}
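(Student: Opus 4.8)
The plan is to prove the equivalent inequality $|F\setminus E|\ge |F|^{(s-1)/s}$. The starting point is the observation, which is just a reformulation of the hypothesis (cf.\ \eqref{inter:eq1}), that $E\subseteq F$ and that $\ux+\ue_i\in F$ for every $\ux\in E$ and every $i\in\uns$.

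The heart of the argument is a \emph{topmost point on each line} estimate. Fix $i\in\uns$ and let $\pi_i\colon\Zs\to\bZ^{s-1}$ be the projection that forgets the $i$-th coordinate. For each $\uy$ in the finite set $\pi_i(F)$, the fibre $F\cap\pi_i^{-1}(\uy)$ is a non-empty finite subset of the line $\pi_i^{-1}(\uy)$; let $\ux(\uy)$ denote its point with the largest $i$-th coordinate. Then $\ux(\uy)+\ue_i$ lies on the same line with $i$-th coordinate strictly larger than that of every point of the fibre, so $\ux(\uy)+\ue_i\notin F$; by the first observation this forces $\ux(\uy)\notin E$. The assignment $\uy\mapsto\ux(\uy)$ is a section of $\pi_i$, hence injective, and its image lies in $F\setminus E$; therefore $|F\setminus E|\ge|\pi_i(F)|$. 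Since $i$ was arbitrary, $|F\setminus E|\ge\max_{i\in\uns}|\pi_i(F)|$.

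To conclude I would invoke the discrete Loomis--Whitney inequality $|F|^{s-1}\le\prod_{i=1}^{s}|\pi_i(F)|$, which is immediate from Shearer's entropy inequality applied to a point chosen uniformly at random in $F$, using the cover of $\uns$ by the $s$ sets $\uns\setminus\{i\}$ (each element being covered exactly $s-1$ times); a short self-contained induction on $s$ is also available if a reference is to be avoided. This yields $\big(\max_i|\pi_i(F)|\big)^{s}\ge\prod_{i=1}^{s}|\pi_i(F)|\ge|F|^{s-1}$, hence $\max_i|\pi_i(F)|\ge|F|^{(s-1)/s}$, and combining with the previous paragraph gives $|F|-|E|=|F\setminus E|\ge|F|^{(s-1)/s}$, which is the assertion (with equality, incidentally, when $|F|=1$).

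The argument is essentially elementary once Loomis--Whitney is granted; the only step requiring a little care is the line estimate, and it goes through precisely because the fibres of $\pi_i$ over distinct $\uy\in\pi_i(F)$ are disjoint and each contributes its unique topmost point in the direction $\ue_i$ to $F\setminus E$. So I do not expect a genuine obstacle here: the one thing to get right is the recognition that the lower bound $\max_i|\pi_i(F)|$ for $|F\setminus E|$ is exactly what should be fed into Loomis--Whitney to produce the exponent $(s-1)/s$.
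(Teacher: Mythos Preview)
Your proof is correct and takes a genuinely different route from the paper's. The paper argues by induction on $s$: slicing $F$ along the last coordinate into subsets $F_i\subset\bZ^{s-1}$, it obtains two lower bounds for $|F\setminus E|$, namely $S=\sum_i|F_i|^{(s-2)/(s-1)}$ (from the induction hypothesis applied to each slice) and $M=\max_i|F_i|$ (from an argument essentially equivalent to your topmost-point bound in the single direction $\ue_s$, though stated via $E_i\subseteq F_i\cap F_{i+1}$); the identity $SM^{1/(s-1)}\ge\sum_i|F_i|=|F|$ then forces $\max(S,M)\ge|F|^{(s-1)/s}$. Your argument instead runs the topmost-point bound in \emph{all} $s$ directions to get $|F\setminus E|\ge|\pi_i(F)|$ for every $i$, and then feeds these into Loomis--Whitney. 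Your approach is more symmetric and conceptually transparent once Loomis--Whitney is granted; the paper's approach is fully self-contained and avoids any external reference, at the cost of a slightly more intricate induction and the asymmetric $S$-versus-$M$ combination.
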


\begin{proof}
We proceed by induction on $s$.  If $s=1$, we have $F\neq F-\ue_1$
and so we get $|E|\le |F|-1$ as stated.  Assume from now on that
$s\ge 2$ and that the result holds in dimension $s-1$.  For each
$i\in\bZ$, we define
\[
 E_i=\{\ux\in\bZ^{s-1}\,;\, (\ux,i)\in E\}
 \et
 F_i=\{\ux\in\bZ^{s-1}\,;\, (\ux,i)\in F\}.
\]
We also denote by $I$ the set of indices $i\in\bZ$ such that
$F_i\neq\emptyset$, and write $(\ue'_1,\dots,\ue'_{s-1})$ for the
canonical basis of $\bZ^{s-1}$.  Since for each $i\in I$ we have
$E_i \subseteq F_i\cap(F_i-\ue'_1)\cap\cdots\cap(F_i-\ue'_{s-1})$,
the induction hypothesis gives $|E_i| \le |F_i|-
|F_i|^{(s-2)/(s-1)}$. Summing on $i\in I$, upon noting that
$E_i=\emptyset$ when $i\notin I$, this gives $|E| \le |F| - S$ where
$S=\sum_{i\in I} |F_i|^{(s-2)/(s-1)}$. We also have $E_i\subseteq
F_{i+1}$ for each $i\in \bZ$, thus $E_i \subseteq F_i\cap F_{i+1}$
and so
\[
 |E|
 \le \sum_{i\in\bZ} |F_i\cap F_{i+1}|
 = \sum_{i\in\bZ} \big( |F_i|-|F_i\setminus F_{i+1}| \big)
 \le |F|-\Big|\bigcup_{i\in\bZ} (F_i\setminus F_{i+1})\Big|.
\]
Since $\cup_{i\in\bZ} (F_i\setminus F_{i+1}) = \cup_{i\in\bZ} F_i$
contains each $F_i$, we deduce that $|E| \le |F| - M$ where
$M=\max_{i\in I} |F_i|$.  By definition of $S$ and $M$, we have
$SM^{1/(s-1)}\ge \sum_{i\in I} |F_i| = |F|$, and so we get $|E| \le
|F| - \max(S,M) \le |F| - |F|^{(s-1)/s}$ as required.
\end{proof}

\begin{corollary}
With the notation of the proposition, we have $|E| \le (1/s)|F|\log
|F|$.
\end{corollary}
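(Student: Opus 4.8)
The plan is to deduce the corollary directly from the proposition by a single elementary estimate, with no further combinatorics needed. Writing $N=|F|$, the proposition gives
$|E| \le N - N^{(s-1)/s} = N\bigl(1 - N^{-1/s}\bigr)$,
so it suffices to establish the scalar inequality $1 - N^{-1/s} \le (1/s)\log N$ for every integer $N\ge 1$ (and $F$ being non-empty guarantees $N\ge 1$).

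To prove that inequality I would substitute $u = (1/s)\log N$, which is non-negative since $N\ge 1$, so that $N^{-1/s} = e^{-u}$. The claim then becomes $1 - e^{-u} \le u$, i.e.\ the familiar bound $e^{-u}\ge 1-u$, valid for all real $u$ (for instance because the graph of the convex function $u\mapsto e^{-u}$ lies above its tangent line at $u=0$). Multiplying the resulting inequality $1 - N^{-1/s}\le (1/s)\log N$ by $N$ and combining with the display above yields
$|E|\le (1/s)N\log N = (1/s)|F|\log|F|$,
which is exactly the assertion of the corollary.

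There is essentially no obstacle here; the only thing to be careful about is the degenerate case $N=|F|=1$, where the proposition forces $E=\emptyset$ and the desired bound reduces to the trivial $0\le 0$, consistently with $u=(1/s)\log 1=0$. For $N\ge 2$ the inequality is strict and no edge cases arise.
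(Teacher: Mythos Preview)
Your proof is correct and follows essentially the same route as the paper: both start from the proposition's bound $|E|\le |F|-|F|^{(s-1)/s}$ and finish with a one-line calculus estimate. The only cosmetic difference is that the paper applies the mean value theorem to $g(x)=|F|^x$ on $[(s-1)/s,1]$, whereas you rewrite the gap as $N(1-e^{-u})$ with $u=(1/s)\log N$ and invoke $1-e^{-u}\le u$; these are the same inequality in different clothing.
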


\begin{proof}
Define $g(x)=|F|^x$ for each $x>0$.  The proposition gives $|E|\le
g(1)-g((s-1)/s)$, thus $|E|\le (1/s)g'(\theta) = (1/s) |F|^\theta
\log |F|$ for some real number $\theta$ in the interval
$((s-1)/s,1)$. Since $|F|^\theta \le |F|$, the conclusion follows.
\end{proof}

%
%

\end{document}